\theoremstyle{plain}
  \newtheorem{theorem}{Theorem}[section]
  \newtheorem{proposition}[theorem]{Proposition}
  \newtheorem{lemma}[theorem]{Lemma}
  \newtheorem{corollary}[theorem]{Corollary}
\theoremstyle{definition}
  \newtheorem{definition}[theorem]{Definition}
  \newtheorem{example}[theorem]{Example}
\theoremstyle{remark}
  \newtheorem{remark}[theorem]{Remark}
\numberwithin{equation}{section}
\def\umapright#1{\smash{
   \mathop{\longrightarrow}\limits^{#1}}}
\def\rmapdown#1{\Big\downarrow\rlap
   {$\vcenter{\hbox{$\scriptstyle#1$}}$}}
\def\tempbaselines
\def\diagram#1{\null\,\vcenter{\tempbaselines
\mathsurround=0pt
    \ialign{\hfil$##$\hfil&&\quad\hfil$##$\hfil\crcr
      \mathstrut\crcr\noalign{\kern-\baselineskip}
  #1\crcr\mathstrut\crcr\noalign{\kern-\baselineskip}}}\,}
\def\pullback#1&#2&#3&#4&#5&#6&#7&#8&{
\diagram{#1&\umapright{#2}&#3\cr
\rmapdown{#4}&&\rmapdown{#5}\cr
#6&\umapright{#7}&#8\cr}}
\def\calC{{\mathcal C}}
\def\calD{{\mathcal D}}
\def\calE{{\mathcal E}}
\def\calO{{\mathcal O}}
\def\calP{{\mathcal P}}
\def\calQ{{\mathcal Q}}
\def\calR{{\mathcal R}}
\def\calS{{\mathcal S}}
\def\calV{{\mathcal V}}
\def\R{{\underline{R}}}
\def\k{{\underline{k}}}
\def \Aut{\mathop{\rm Aut}\nolimits}
\def\colim{\mathop{\varprojlim}\nolimits}
\def \H{\mathop{\rm H}\nolimits}
\def \Hom{\mathop{\rm Hom}\nolimits}
\def \Id{\mathop{\rm Id}\nolimits}
\def\lim{\mathop{\varinjlim}\nolimits}
\def \Ob{\mathop{\rm Ob}\nolimits} 
\def \Mor{\mathop{\rm Mor}\nolimits} 
\def\Res{\mathop{\rm Res}\nolimits}
\def\Stab{\mathop{\rm Stab}\nolimits}
\def\supp{\mathop{\rm supp}\nolimits}
\def\SG{{S\rtimes G}}
\def\SH{{S\rtimes H}}
\def\SK{{S\rtimes K}}
\def\PH{{\calP\rtimes H}}
\def\PG{{\calP\rtimes G}}
\def\QH{{\calQ\rtimes H}}
\def\RK{{\calR\rtimes K}}
\def\VD{{\calV\rtimes D}}
\def\CC{{\Bbb C}}
\def\ZZ{{\Bbb Z}}
\begin{document}

\title[Transporter category algebras]
{Local representation theory of transporter categories}

%\author{Peter Webb}
%\email{webb@math.umn.edu}
%\address{School of Mathematics\\
%University of Minnesota\\
%Minneapolis, MN 55455, USA}

\author{Fei Xu}
\email{fxu@stu.edu.cn}
\address{Department of Mathematics\\
Shantou University\\
Shantou, Guangdong 515063, China}

\subjclass[2010]{}

\keywords{$G$-poset, transporter category, category algebra, skew group ring, Kan extension, vertex and source, defect category}

\thanks{The author \begin{CJK*}{UTF8}{}
\CJKtilde \CJKfamily{gbsn}(徐 斐)
\end{CJK*} is supported by the NSFC grant No. 11671245}

\begin{abstract}
We attempt to generalize the $p$-modular representation theory of finite groups to finite transporter categories, which are regarded as generalized groups. We shall carry on our tasks through modules of transporter category algebras, a type of Gorenstein skew group algebras. The Kan extensions, upgrading the induction and co-induction, are our main tools to establish connections between representations of a transporter category and of its transporter subcategories. Some important constructions and theorems in local representation theory of finite groups are generalized.
\end{abstract}

\maketitle

\section{Introduction}

Let $G$ be a finite group and $\calP$ be a finite $G$-poset (we shall regard a $G$-set as a $G$-poset with trivial relations). The \textit{transporter category} over $\calP$ is a \textit{Grothendieck construction} $\PG$, a specifically designed finite category. It may be thought as a semi-direct product between $G$ and $\calP$, and is considered as a generalized group. This construction has its roots in group theory, representation theory and algebraic topology. Our initiative comes from the observation that there exists a category equivalence $(G/H)\rtimes G\simeq H$ for each subgroup $H\subset G$.

In our earlier work, we investigated homological properties of the \textit{category algebra} $R\PG$, where $R$ is a commutative ring with identity. It is known that the category of finitely generated left modules, $R\PG$-mod, is a symmetric monoidal category. Based on this, we studied the representation theory of $R\PG$, and its connections with representations of groups \cite{Xu1, Xu3, Xu4}. In this way, we generalized some well-known results in group representations and cohomology, and provided new insights into certain existing results.

In the present paper, we examine transporter categories (as generalized groups) from a different point of view. Our treatment allows some classical settings in local representation theory (of groups), and the results that follow, to survive in this generality. To this end, let $H$ be a subgroup of $G$ and $\calQ$ be a $H$-subposet of $\calP$. The category $\QH$ is called a \textit{transporter subcategory} of $\PG$. We will discuss the structure theory of transporter categories, based on which we shall develop a local representation theory. It means that we will establish connections between the representations of $\PG$ and those of its transporter subcategories.  The idea of using $\QH$ to understand $\PG$ may be traced back to the Quillen stratification of the equivariant cohomology ring $\H^*_G(B\calP,k)\cong\H^*(EG\times_GB\calP,k)$, in which $EG\times_GB\calP$ is indeed homotopy equivalent to the classifying space $B(\PG)$. Our ultimate aim is to investigate representations of various local categories, arisen in group representations and homotopy theory, and their applications, see for instance \cite{AKO}.

We shall carry on the above mentioned tasks with the help of transporter category algebras $k\PG$, where $k$ is an (algebraically closed) field of characteristic $p$ that divides the order of $G$. If $H$ happens to be a $p$-subgroup, we shall call $\QH$ a $p$-transporter subcategory. We have the following comparison chart. The bulk of this paper contains a theory of vertices and sources, as well as a theory of blocks, for transporter category algebras.

\begin{center}
\begin{tabular}{r|c|c}
& group representations & category representations\\
\hline
\hline
\scriptsize{structure} & $G$ & $\PG$\\
\hline
\scriptsize{substructure} & $H$ & $\QH$\\
\hline
\scriptsize{algebra} & $kG$ & $k\PG$\\
\hline
\scriptsize{canonical basis} & $G=\Mor G$ & $\Mor(\PG)$\\
\hline
\hline
\scriptsize{modules} & $kG$-mod, $kH$-mod & $k\PG$-mod, $k\QH$-mod\\
\hline
\scriptsize{trivial module} & $k$ & $\k$\\
\hline
\scriptsize{restriction} & $\downarrow^G_H$ & $\downarrow^{\PG}_{\QH}$\\
\hline
\scriptsize{left Kan extension} & $\uparrow^G_H$ & $\uparrow^{\PG}_{\QH}$\\
\hline
\scriptsize{right Kan extension} & $\Uparrow^G_H (\cong \uparrow^G_H)$ & $\Uparrow^{\PG}_{\QH} (\not\cong\uparrow^{\PG}_{\QH})$\\
\hline
\hline
\scriptsize{enveloping category}& $G^e\cong G\times G$ & $(\PG)^e\cong\calP^e\rtimes G^e$\\
\hline
\scriptsize{diagonal category} & $\delta(G)\cong G$ & $F(\calP)\rtimes\delta(G)\cong F(\calP)\rtimes G$\\
\hline
\scriptsize{block theory} & $kG\cong k\uparrow^{G^e}_{\delta(G)}$ & $k\PG\cong\k\uparrow^{(\PG)^e}_{F(\calP)\rtimes\delta(G)}$\\
\hline
\scriptsize{defect} & $p$-\scriptsize{group} $D\subset G$ & $p$-\scriptsize{category} $\calV\rtimes D\subset F(\calP)\rtimes G$\\
\hline
\end{tabular}
\end{center}

To see a concrete example, we may choose $\calP=\calS_p$, the poset of non-trivial $p$-subgroups of $G$, with conjugation action. Then $\calS_p\rtimes G$ is the usual $p$-transporter category ${\rm Tr}_p(G)$, containing all $p$-local subgroups of $G$, as automorphism groups of objects. By definition, a representation of $\calS_p\rtimes G$ is a covariant functor from $\calS_p\rtimes G$ to $Vect_k$, the category of finite-dimensional $k$-vector spaces. It can be thought as a diagram of representations of local subgroups $N_G(P)$, for a collection of $P\in\Ob\calS_p$. As an application of local categories, one may find a new way to reformulate the Alvis-Curtis duality when $G$ is a Chevalley group in \cite{Xu4}.

The main results, whose proofs depend on the explicit calculation of $\uparrow^{\PG}_{\QH}$, include
\begin{enumerate}
\item a generalized theory of vertices and sources, including a Mackey formula (Theorem 4.18) and a Green correspondence (Theorem 4.26),

\item a comparison between the theories for $kG$-modules and of constance $k\PG$-modules (Proposition 4.23 and Corollary 4.27), and

\item a generalized Brauer correspondence (Theorem 5.8).
\end{enumerate}

To set our work into the historical context, we note that the transporter category algebras are skew group algebras, and thus are fully group-graded algebras. This work is partially motivated by the papers on fully group-graded algebras by Boisen \cite{Bo1}, Dade \cite{Da1, Da2, Da3}, and Miyashita \cite{Mi} (the latter in the context of $G$-Galois theory). Especially, Dade conceived a theory of vertices and sources (for fully group-graded algebras). However, his ``vertices'' seem to be too big, see Examples 4.15 and 4.24. Same problem occurs in Boisen's definition of a ``defect'' of a block, because a ``defect'' is a ``vertex'', in the sense of Dade, of some module. We shall propose a sharpened definition of a vertex, incorporating our earlier work on general EI category algebras \cite{Xu1}, and prove it is appropriate. For the reader's convenience, some key constructions and results, from the previously mentioned papers, are quoted here. The approach in this paper is mostly parallel to the standard one for group representations. However the extra $G$-poset structure does require more than mere technicality. 

The paper is organized as follows. In Section 2, we recall relevant results for fully group-graded algebras. Then we examine local structures of transporter categories in Section 3. Subsequently the Kan extensions for investigating representations will be thoroughly discussed from the beginning of Section 4. A generalized theory of vertices and sources will be given. Finally in Section 5, we study the block theory of transporter category algebras.\\

\noindent{\sc Acknowledgement} The author would like to thank Peter Webb for stimulating discussions during his visits to Shantou in 2013 and 2014.

\section{Results from fully group-graded algebras}

In the present paper, we want to develop modular representation theory of transporter category algebras. Some known results on fully group-graded algebras of Boisen \cite{Bo1}, Dade \cite{Da1, Da2, Da3}, and Miyashita \cite{Mi}, will specialize to our situation and they will pave the way towards our key constructions. We shall quote these results mainly for skew group algebras. Some proof are given if they are needed in our presentation.

Let $R$ be a commutative ring with identity. Suppose $G$ is a group and $A$ is a $G$-graded $R$-ring. It means that, as $R$-modules, we have
$$
A=\bigoplus_{g\in G}A_g,
$$
satisfying $A_gA_h\subset A_{gh}$. If $A$ meets the extra condition that $A_gA_h= A_{gh}$, then we say $A$ is \textit{fully $G$-graded}. Suppose $H$ is a subgroup of $G$. We may define a subalgebra $A_H=\bigoplus_{h\in H}A_h$. Particularly $A_1$ becomes a subalgebra.

Suppose $S$ is an $R$-ring that admits a $G$-action. We say $S$ has a $G$-action, if there exists a group homomorphism $\phi : G\to\Aut(S)$. Under the circumstance, we also call $S$ a $G$-ring. We usually denote the $G$-action by ${}^gs=\phi(g)(s)$ for all $s\in S$ and $g\in G$. Then we may continue to define the \textit{skew group ring} $\SG$. As an $R$-module, it is simply $S\otimes_R RG$. For convenience, we write $\sum sg$, instead of $\sum s\otimes g$, for an element in the skew group ring. The multiplication is determined by $(sh)(tg)=s{}^hthg$, for $s,t\in S$ and $h,g\in G$. This ring contains subrings $\{s1\bigm| s\in S\}\cong S$ and $\{n1_Sg\bigm| n\in \ZZ,\; g\in G\}\cong (\ZZ/d\ZZ )G$  for some $d\in\ZZ$. We may wish to take a larger ring $R\subseteq Z(S)$  fixed by $G$ so that $RG\subseteq S\rtimes G$. We assume $S$ is free as an $R$-module. For the sake of simplicity, for each $s\in S$ and $g\in G$, we shall write $g=1_Sg$ and $s=s1$ as elements of $\SG$, when there is no confusion. 

The skew group ring $A=\SG$ is fully $G$-graded, if we put
$$
A_g=Sg=\{sg\bigm{|} s\in S\},
$$
for each $g\in G$. Here we shall mainly recall constructions and results by Dade \cite{Da1, Da2} and Boisen \cite{Bo1}. For future applications, we will only state known results from \cite{Da1, Da2, Bo1} in the special forms for skew group algebras.

We also note that Reiten and Riedtmann \cite{RR} studied the representation theory of skew group algebras over $R=\CC$, the complex numbers. See \cite{ARS} for another presentation.

If $H\subset G$, we have an inclusion $\SH\subset\SG$, and thus the induction
$$M\uparrow_{\SH}^{\SG}:= {}_\SG \SG\otimes_\SH M$$
and restriction
$$
M\downarrow_{\SH}^{\SG} := {}_\SH\SG\otimes_\SG M.
$$
For instance $\SG\cong S\uparrow_{S\rtimes 1}^{\SG}$. In \cite{Da1,Bo1}, these two functors are denoted by symbols $\uparrow^G_H$ and $\downarrow^G_H$ since $S$ is unchanged and it matches the special case of groups. We refrain from using the latter in order to be consistent throughout this paper.

In general, we have a decomposition
$$
M\uparrow^{\SG}_{\SH}=\bigoplus_{g_i\in [G/H]} g_i\otimes M,
$$
and the $\SG$-modules structure is obtained by a ``twisted permutation'' of summands
$$
(sg)(g_i\otimes m)=gg_i\otimes{}^{(gg_i)^{-1}}sm=g_j\otimes({}^{h(gg_i)^{-1}}sh)m=g_j\otimes({}^{g_j^{-1}}sh)m,
$$
if $gg_i=g_jh$ for some $h\in H$.

Parallel to this, if $M'$ is a right $\SH$-module, then the induced right $\SG$-module may be written as
$$
M'\uparrow^{\SG}_{\SH}=\bigoplus_{g'_i\in[H\backslash G]}M\otimes g'_i.
$$
It is a bit surprising, but the reasonable right $\SG$-action is $$
(m'\otimes g'_i)(sg)=m{}^{g'_i}s\otimes g'_ig,
$$
for all $m'\in M', s\in S$ and $g\in G$. This difference attributes to the fact that usually $sg\ne gs$.

Given $g\in[G/H]$ and a $\SH$-module $N$, we put 
$$
{}^g(\SH):=g(\SH)g^{-1}.
$$
The right hand side makes sense because we regard $g$ as an element of $\SG$ and meanwhile $\SH\subset\SG$. It is also a skew group ring, identified with ${}^gS\rtimes{}^gH=S\rtimes{}^gH$ via the following equation 
$$
{}^g(sh)= g(sh)g^{-1}={}^gs{}^gh.
$$
It follows that $g\otimes N$ becomes a ${}^g(\SH)$-module, with
$$
g(sh)g^{-1}(g\otimes n)=g\otimes(sh)n.
$$
Analogous to the situation of group representations, the underlying space of $N$ admits a ${}^g(\SH)$-module structure via the linear isomorphism $N\cong g\otimes N$. We shall denote it by ${}^gN$.

\begin{theorem}[Dade]
Suppose $H, K$ are two subgroups of $G$ and $M\in\SG$-mod. There is a Mackey formula 
$$
M\uparrow_{\SH}^{\SG}\downarrow_{\SK}^{\SG} = \bigoplus_{g\in[K\backslash G/H]}[{}^g(M\downarrow^{\SH}_{S\rtimes(K^g\cap H)})]\uparrow^{\SK}_{S\rtimes(K\cap{}^gH)}.
$$
\end{theorem}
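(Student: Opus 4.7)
The plan is to adapt the classical Mackey decomposition to the skew group setting, carefully tracking the $G$-twist of $S$. I would start from the explicit formula $M\uparrow_{\SH}^{\SG} = \bigoplus_{g_i \in [G/H]} g_i \otimes M$ recalled in the excerpt, with $\SG$ acting by the twisted permutation
\[
(s g')(g_i \otimes m) = g_j \otimes ({}^{g_j^{-1}} s \cdot h) m
\]
whenever $g' g_i = g_j h$ with $g_j \in [G/H]$ and $h \in H$.

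To restrict to $\SK$, I would partition $[G/H]$ into $K$-orbits under left multiplication. These orbits correspond to the double cosets $[K \backslash G / H]$, and the stabiliser in $K$ of the coset $gH$ is precisely $K \cap g H g^{-1} = K \cap {}^g H$. Hence, for each double coset representative $g$, the single cosets of $H$ inside $KgH$ can be listed as $\{k g : k \in [K/(K \cap {}^g H)]\}$, and the submodule
\[
N_g := \bigoplus_{k \in [K/(K \cap {}^g H)]} kg \otimes M
\]
is $\SK$-stable by the action formula. This yields the restriction decomposition $M\uparrow_{\SH}^{\SG}\downarrow_{\SK}^{\SG} = \bigoplus_{g \in [K \backslash G / H]} N_g$.

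The heart of the argument is to identify $N_g$ with the right-hand summand $[{}^g(M\downarrow^{\SH}_{S \rtimes (K^g \cap H)})]\uparrow^{\SK}_{S \rtimes (K \cap {}^gH)}$. Setting $L := K \cap {}^gH$ and $L' := K^g \cap H$, one has $g^{-1} L g = L'$, and conjugation by $g$ inside $\SG$ gives the ring isomorphism $S \rtimes L' \xrightarrow{\sim} S \rtimes L$ through which the twist ${}^g M$ is defined. The induced module on the right then decomposes as $\bigoplus_{k \in [K/L]} k \otimes {}^g M$, and I would define the comparison map
\[
\phi \colon \bigoplus_{k \in [K/L]} k \otimes {}^g M \longrightarrow N_g, \qquad k \otimes m \longmapsto kg \otimes m,
\]
which is manifestly a bijection of underlying $R$-modules.

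The main obstacle is the bookkeeping needed to show $\phi$ is $\SK$-equivariant. Given $s k_0 \in \SK$, one writes $k_0 k = k'' \ell$ with $k'' \in [K/L]$ and $\ell \in L$, and then exploits the identity $\ell g = g h$ with $h := g^{-1} \ell g \in L'$ to rewrite $s k_0 k g = k'' g \cdot ({}^{(k''g)^{-1}} s) \cdot h$ inside $\SG$. On the other side, the action of $s k_0$ on $k \otimes m$ produces an element of $S \rtimes L$ acting on ${}^g M$, which via the transport $S \rtimes L \cong S \rtimes L'$ becomes the $h$-action on the underlying module $M$ with the very same twist ${}^{(k''g)^{-1}} s$. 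Matching the two expressions is a direct but delicate computation, and once carried out it confirms that $\phi$ is the required $\SK$-module isomorphism, completing the proof.
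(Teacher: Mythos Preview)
Your proposal is correct and follows essentially the same idea as the paper's proof: both partition the left $H$-cosets in $G$ into $K$-orbits indexed by double cosets, compute the stabiliser of $gH$ in $K$ to be $K\cap{}^gH$, and then identify the summand attached to a double coset with the appropriate induced conjugate module. The only cosmetic difference is that the paper carries this out once at the level of the $(\SK,\SH)$-bimodule $\SG$ (showing $\bigoplus_{g_i\in KgH}g_i(\SH)\cong[{}^g(\SH\downarrow_{S\rtimes(K^g\cap H)})]\uparrow^{\SK}_{S\rtimes(K\cap{}^gH)}$ and then tensoring with $M$), whereas you work directly with $M\uparrow$ and spell out the explicit equivariance check for the comparison map $\phi$; the underlying computation is the same.
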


\begin{proof}
This comes from a decomposition of ${}_\SK\SG_\SH$. As a right $\SH$-module, $\SG=\bigoplus_{g_i\in [G/H]} g_i\SH$. Moreover $g_i\SH$, for $g_i\in KgH$, is invariant under the action of $\SK$. The group $K$ acts on $g\SH$ and its stabilizer is
$$
\begin{array}{ll}
\Stab_K(g(\SH))&=\{k\in K \bigm{|} kg(\SH)=g(\SH)\}\\
&=\{k\in K \bigm{|} g^{-1}kg(\SH)=(\SH)\}\\
&=\{k\in K \bigm{|} g^{-1}kg\in H\}\\
&=K\cap{}^gH.
\end{array}
$$
We readily verify that 
$$
\bigoplus_{\substack{g_i\in[G:H]\\g_i\in[K\backslash g/H]}}g_i(\SH)=[g(\SH)]\uparrow^{\SK}_{S\rtimes(K\cap{}^gH)}.
$$ 
However since 
$$
g(\SH)={}^g(\SH\downarrow^{\SH}_{S\rtimes(K^g\cap H)})
$$ 
as a $S\rtimes(K\cap{}^gH)$-module, our decomposition formula follows from it.
\end{proof}

In the proof, we actually showed that $\SG$ is a free right $\SH$-module. It means that $\uparrow^{\SG}_{\SH}$ is exact.

Following Green's approach to group representation theory, Dade introduced a concept of relative projectivity:\\

\noindent Let $M$ be a $\SG$-module and $H$ be a subgroup of $G$. If the $\SG$-module epimorphism
$$
M\downarrow^{\SG}_{\SH}\uparrow^{\SG}_{\SH} \to M
$$
is split, then $M$ is said to be projective relative to $\SH$, or relatively $\SH$-projective.\\

In \cite{Da1, Bo1}, $M$ was called relatively $H$-projective, for the obvious reasons. We use the more cumbersome terminology because, again, we want to be consistent with the rest of the paper. Dade proved several equivalent conditions for $M$ to be relatively $\SH$-projective, which no doubt are parallel to the case of group representations. (Using a relative trace map of Miyashita, defined entirely analogous to the group case, he also obtained a Higman's criterion.)

\begin{theorem}[Dade] Let $R$ be a field of characteristic $p>0$. If $M$ is an indecomposable $\SG$-module, then there is a minimal $p$-subgroup $H$, unique up to conjugacy, such that $M$ is relatively $\SH$-projective. If $K$ is a subgroup such that $M$ is relatively $\SK$-projective, then $\SK$ contains a conjugate of $\SH$.

If $H$ is a Sylow $p$-subgroup of $G$, then every $\SG$-module is relatively $\SH$-projective.
\end{theorem}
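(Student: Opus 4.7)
The plan is to mimic Green's classical argument for ordinary group representations, with the Mackey formula of Theorem 2.1 and the Higman criterion associated to Miyashita's relative trace map (both already in place for skew group algebras, as the text has just recalled) replacing their group-theoretic counterparts.

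I would begin with the final clause. If $H$ is a Sylow $p$-subgroup of $G$, then $[G:H]$ is invertible in $R$, so the element $\frac{1}{[G:H]}\Id_M\in\End_{\SH}(M)$ is sent to $\Id_M$ by the relative trace $\tr^{\SG}_{\SH}\colon\End_{\SH}(M)\to\End_{\SG}(M)$, because $\tr^{\SG}_{\SH}(\Id_M)=\sum_{g\in[G/H]}g\Id_M g^{-1}=[G:H]\cdot\Id_M$. Higman's criterion then provides a splitting of the counit $M\downarrow^{\SG}_{\SH}\uparrow^{\SG}_{\SH}\to M$, so $M$ is relatively $\SH$-projective. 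In particular, the collection of $p$-subgroups $H'\subseteq G$ for which the indecomposable $M$ is relatively $\SH'$-projective is non-empty, and since $G$ is finite I can pick one, call it $H$, of minimum order.

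For the first two assertions, let $K$ be an arbitrary subgroup of $G$ with $M$ relatively $\SK$-projective, and set $N=M\downarrow^{\SG}_{\SH}$, so that $M$ is a direct summand of $N\uparrow^{\SG}_{\SH}$. Applying Theorem 2.1 to $N\uparrow^{\SG}_{\SH}\downarrow^{\SG}_{\SK}$ and then inducing up to $\SG$ via transitivity of induction, $M\downarrow^{\SG}_{\SK}\uparrow^{\SG}_{\SK}$, and hence its summand $M$, is a direct summand of
$$
\bigoplus_{g\in[K\backslash G/H]}\bigl[{}^g\bigl(N\downarrow^{\SH}_{S\rtimes(K^g\cap H)}\bigr)\bigr]\uparrow^{\SG}_{S\rtimes(K\cap{}^gH)}.
$$
Indecomposability of $M$ forces $M$ to be a summand of just one term, so $M$ is relatively $S\rtimes(K\cap{}^gH)$-projective for some double coset representative $g$. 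Now $K\cap{}^gH$ is a $p$-subgroup contained in ${}^gH$, and $|{}^gH|=|H|$ is the minimal order in our collection; therefore $|K\cap{}^gH|\geq|{}^gH|$, forcing $K\cap{}^gH={}^gH$, i.e.\ ${}^gH\subseteq K$, and hence $\SK\supseteq{}^g(\SH)$ as required. Specializing to another minimal choice $K=H'$ yields a containment of two $p$-groups of equal order, so $H'={}^gH$, proving uniqueness up to conjugacy.

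The main technical point I anticipate is the bookkeeping attached to the twisted identification ${}^g(\SH)=S\rtimes{}^gH$ and its action on the module ${}^g(N\downarrow^{\SH}_{S\rtimes(K^g\cap H)})$: one must confirm that relative projectivity is invariant under this conjugation, so that ``$M$ is a summand of $[{}^g(-)]\uparrow^{\SG}_{S\rtimes(K\cap{}^gH)}$'' really is equivalent to ``$M$ is relatively $S\rtimes(K\cap{}^gH)$-projective''. Because the skew-group Mackey formula and Higman criterion have already been invoked, this verification is essentially formal and mirrors the corresponding step in Green's treatment of $kG$-modules.
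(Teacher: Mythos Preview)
Your proposal is correct and follows exactly the approach the paper attributes to Dade, namely Green's classical vertex argument transported to the skew group setting via the Mackey formula (Theorem~2.1) and Miyashita's relative trace/Higman criterion. Note that the paper itself does not supply a proof of this theorem: it is quoted from Dade's work, with only the surrounding remarks (``Dade proved several equivalent conditions\ldots'', ``Using a relative trace map of Miyashita\ldots he also obtained a Higman's criterion'') indicating how the argument runs; your sketch fills in precisely those steps.
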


Based on the previous theorem, in \cite{Da1, Bo1}, $H$ was called a ``vertex'' of the indecomposable $\SG$-module $M$. However, we only go down to the subalgebra $\SH$, not $RH$. We shall see later on that this is a reason why Dade's construction may be improved for $S=R\calP$.

To study the block theory of a fully group-graded algebra, Boisen introduced the concept of a diagonal subalgebra. We also recall it for skew group algebras. Given $A=\SG$, we set $S^e=S\otimes_RS^{op}$, $G^e=G\times G^{op}$ and $\delta(G)=\{(g,(g^{-1})^{op})\bigm{|} g\in G\}$. Note that $G^e$ may be identified with the product group $G\times G$, and there is a group isomorphism $\delta(G)\cong G$. The group $G^e$ acts on $S^e$ via ${}^{(g,h^{op})}(s,t^{op})=({}^gs,({}^{h^{-1}}t)^{op})$, and the ``diagonal subalgebra'' of $A^e=A\otimes_RA^{op}\cong S^e\rtimes G^e$ is defined to be
$$
\Delta(A)=\bigoplus_{g\in G}A_g\otimes_R(A_{g^{-1}})^{op}=\bigoplus_{g\in G}Sg\otimes_R(g^{-1})^{op}S^{op}\cong S^e\rtimes\delta(G).
$$
As an example, regarding $kG$ as a fully $G$-graded algebra, we have $\Delta(kG)\cong k\delta(G)\cong kG$. Assume $R$ is a field of charcteristic $p>0$. Boisen proved that an indecomposable summand $B$ (a block) of the $A^e$-module $A$ is relatively $S^e\rtimes\delta(H)$-projective, for a minimal $p$-subgroup $H\subset G$. In light of this, he called $H$ a ``defect group'' of $B$. Analogous to the group case, he subsequently defined a generalized Brauer correspondence and established a generalized Brauer's First Main Theorem. As in Dade's treatment, his ``defect'' is also too big when $S=R\calP$.

\section{Transporter categories and their algebras}

We shall study a special class of skew group algebras, namely the transporter category algebras $k\PG$, because $\calP$ has ``local structure''. Like a group algebra or an incidence algebra, $k\PG$ possesses a canonical base, which is the morphism set of the transporter category $\PG$. This basis has an intrinsic structure and is crucial to our theory. Particularly it allows us to introduce transporter subcategories $\QH$ of $\PG$, based on which we will be able to discuss the interactions between representations of $\PG$ and those of $\QH$.

\subsection{Transporter categories and their subcategories} We shall develop the structure theory of transporter categories, before going into their representation theory. We follow standard terminologies in category theory. For a category $\calC$, we show denote by $\Ob\calC$ and $\Mor\calC$ its classes of objects and morphisms. If $\alpha$ and $\beta$ are composable, then we write $\beta\alpha$ for the composite ${\buildrel{\alpha}\over{\to}}{\buildrel{\beta}\over{\to}}$. If $\alpha$ is a morphism, we denote by $s(\alpha)$ and $t(\alpha)$ the \textit{start} (or domain) and the \textit{terminal} (or codomain) of $\alpha$, respectively.

Let $G$ be a group and $\calP$ be a poset. We say $\calP$ admits a $G$-action, or is a $G$-poset, if there exists a group homomorphism $\phi : G\to\Aut(\calP)$. We usually denote by ${}^gx=\phi(g)(x)$ and ${}^g\alpha=\phi(g)(\alpha)$, for all $g\in G, x\in\Ob\calP, \alpha\in\Mor\calP$. The action is trivial if the image $\Im\phi=\Id_{\calP}$.

A group is considered as a category with one object, each group element giving an (auto)morphism, while a poset $\calP$ is a category if $\Ob\calP$ is the underlying set and we regard each $x \le y$ as a morphism $x \to y$. The transporter category on $\calP$ is by definition a Grothendieck construction, some sort of ``semi-direct product'' between two small categories. 

\begin{definition} Let $G$ be a group and $\calP$ be a $G$-poset. Then the \textit{transporter category} $\PG$, of $G$ on $\calP$, is a category, whose objects are the same as those of $\calP$, and whose morphisms are given by $\Hom_{\PG}(x,y)=\{\alpha g\bigm{|} \alpha\in\Hom_{\calP}({}^gx,y)\}$, for any $x, y\in\Ob(\PG)=\Ob\calP$. It is required that $\alpha g=\alpha' g'$ if and only if $\alpha=\alpha'$ and $g=g'$.

If two morphisms $\alpha g$ and $\beta h$ are composable, in the sense that $(\alpha g)(\beta h)\in\Mor(\PG)$, then $(\alpha g)(\beta h)=(\alpha{}^g\beta)(gh)$.  

If $H\subset G$ and $\calQ\subset\calP$ is a $H$-subposet, then we call $\QH$ a \textit{transporter subcategory} of $\PG$.
\end{definition}

Both groups and posets are examples of transporter categories. But we are more interested in the others. Note that when the action of $G$ on $\calP$ is trivial, we simply have $\PG=\calP\times G$.

If $x$ is an object of $\PG$, then we shall use $\langle x\rangle$ to denote the set of objects that are isomorphic to $x$. It is easy to see that $\langle x\rangle=G.x$ is exactly the $G$-orbit containing $x$. Subsequently, $\langle x\rangle\rtimes G$ is a transporter subcategory of $\PG$, and furthermore is a groupoid. The automorphism group $\Aut_{\PG}(x)$ is identified with the stabilizer $G_x=\Stab_G(x)$. It follows that $\{x\}\times G_x$ is a skeleton of $\langle x\rangle\rtimes G$ (hence $\langle x\rangle\rtimes G\simeq \{x\}\times G_x$ as categories).

\begin{example} Let $G=\langle g\bigm{|} g^2=1 \rangle$ and $H$=1. Let $\calP$ be the following $G$-poset
$$
\xymatrix{&z\ar@(ur,ul)_{1_z}&\\
x\ar[ur]^{\alpha}\ar@(ul,dl)_{1_x} & & y\ar[ul]_{\beta}\ar@(ur,dr)^{1_y}}
$$
such that the group generator $g\in G$ fixes $z$ and exchanges $x,y$. On morphisms $g$ acts transitively on the two sets $\{\alpha, \beta\}$, $\{1_x,1_y\}$, and fixes $1_z$. The transporter category $\PG$ is
$$
\xymatrix{&z \ar@(ur,ul)_{1_z1,1_zg} &\\
x\ar[ur]^{\alpha 1}\ar@/^/[rr]^{1_yg} \ar@(ul,dl)_{1_x1} & & y\ar[ul]_{\beta 1}\ar@/^/[ll]^{1_xg}\ar@(ur,dr)^{1_y1}}
$$
It is helpful to point out the existence of the following morphisms: $\alpha g=(\alpha 1)(1_x g):y\to z$ and $\beta g=(\beta 1)(1_y g): x\to z$. Choose $\calQ$ to be the subposet consisting of $z$. Then $\QH=\calQ\times H$ is a transporter subcategory consisting of exactly one object $z$ and one morphism $1_z1$.

In $\PG$, the objects $x$ and $y$ are isomorphic. Thus a skeleton of $\PG$ is
$$
\xymatrix{&z \ar@(ur,ul)_{1_z1,1_zg} &\\
x \ar@/_/[ur]_{\beta g} \ar@/^/[ur]^{\alpha 1} \ar@(ul,dl)_{1_x1} & & }
$$
\end{example}

All transporter categories are \textit{EI categories}, in the sense that every endomorphism is an isomorphism \cite{Xu1}. We shall rely on the EI condition to introduce some crucial constructions. For instance, the EI condition gurantees a partial order on the set of isomorphism classes of objects.

\begin{definition} Let $\calC$ be an EI category and $\calD$ be a full subcategory. Given an object $x\in\Ob\calC$, we define $\calD_{\le x}$ to be the full subcategory of $\calD$ consisting of objects $\{y\in\Ob\calD\bigm{|}\Hom_{\calC}(y,x)\ne\emptyset\}$. Similarly we can define $\calD_{\ge x}$. The subcategory $\calD$ is said to be an \textit{ideal} in $\calC$ if, for every $x\in\Ob\calD$, $\calC_{\le x}\subset\calD$. The subcategory $\calD$ is said to be a \textit{coideal} in $\calC$ if, for every $x\in\Ob\calD$, $\calC_{\ge x}\subset\calD$. The subcategory $\calD$ is said to be \textit{convex} if $\beta\alpha\in\Mor\calD$ implies $t(\alpha)=s(\beta)\in\Ob\calD$. 

We define $\calC_x$ to be the convex subcategory consisting of all objects isomorphic to $x$. Particularly, if $\calC=\PG$ is a transporter category, then $(\PG)_x=\langle x\rangle\rtimes G$.
\end{definition}

Note that ideals and coideals in $\calC$ are always convex. The intersection of two convex (resp. ideal, coideal) subcategories is still convex (resp. ideal, coideal) in $\calC$. These constructions were used to study general EI categories. If $\calC$ happens to be a poset, then every subposet $\calD$ is full as a subcategory. When we deal with transporter categories, we need the following subcategories. By an ideal (or a coideal) $H$-subposet, we mean an ideal (or a coideal) of $\calP$ which is an $H$-subposet of $\calP$ at the same time. For brevity, we shall call an ideal (or a coideal) $H$-subposet an $H$-ideal (or an $H$-coideal).

\begin{definition} Let $\PG$ be a transporter category. If $H\subset G$ and $\calQ\subset\calP$ is an ideal (resp. a coideal) $H$-subposet, then we call $\QH$ a \textit{weak ideal} (resp. a \textit{weak coideal}) of $\PG$. 

If $H\subset G$ and $\calQ\subset\calP$ is a convex $H$-subposet, then we call $\QH$ a \textit{weakly convex transporter subcategory} of $\PG$.
\end{definition}

Weak ideals and coideals are weakly convex. Here $\QH$ is called weakly convex because it is unnecessarily full in $\PG$. We shall demonstrate that weak ideals and coideals, and weakly convex transporter subcategories, which reflect certain local structures of $\PG$, are interesting subjects for investigation.

\begin{lemma} Suppose $\calD$ is a convex subcategory of $\PG$. Then there is a unique convex $G$-subposet $\calQ\subset\calP$ such that $\calD=\calQ\rtimes G$.

In fact, $\calQ\rtimes G$ is convex (or an ideal, or a coideal) if and only if $\calQ$ is convex (or an ideal, or a coideal).
\end{lemma}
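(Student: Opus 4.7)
The plan is to set $\calQ:=\Ob\calD$ with the partial order inherited from $\calP$, verify that $\calQ$ is $G$-invariant so that $\calD=\calQ\rtimes G$, and then reduce each of the three equivalences (convex, ideal, coideal) to a direct translation through the composition rule of $\PG$. The uniqueness of $\calQ$ is immediate, since any $\calQ'$ with $\calD=\calQ'\rtimes G$ must satisfy $\Ob\calQ'=\Ob\calD$ with the order inherited from $\calP$.

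The heart of the argument is the equivalence in the second sentence, proved by unwinding the composition rule $(\alpha g)(\beta h)=(\alpha\,{}^g\beta)(gh)$. The intermediate object of such a composition in $\PG$ is $s(\alpha g)=t(\beta h)={}^{g^{-1}}s(\alpha)=t(\beta)$, and since $\calQ$ is $G$-stable this object lies in $\Ob\calQ$ iff $s(\alpha)$ does. Specializing to $g=h=1$ extracts convexity of $\calQ$ as a sub-poset of $\calP$ from convexity of $\calQ\rtimes G$ in $\PG$, and the converse follows by $G$-invariance. For the ideal (resp.\ coideal) equivalence I would combine this with the characterization that $\Hom_{\PG}(y,x)\ne\emptyset$ iff there exists $g\in G$ with ${}^gy\le x$ in $\calP$; using $G$-invariance of $\calQ$, this reduces the inclusion $(\PG)_{\le x}\subset\calQ\rtimes G$ for every $x\in\Ob\calQ$ to the condition $\calP_{\le x}\subset\calQ$ for every $x\in\Ob\calQ$, i.e.\ $\calQ$ being an ideal $G$-subposet, and symmetrically for coideals.

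The delicate point that I would flag is the initial existence claim: that a convex subcategory $\calD\subset\PG$ is actually of the form $\calQ\rtimes G$. This requires $\Ob\calD$ to be closed under the $G$-action and $\calD$ to contain every $\PG$-morphism between objects of $\Ob\calD$. I would extract this closure from the convexity hypothesis together with the internal structure of $\PG$, by writing an arbitrary morphism $\alpha g$ as the composite of a pure $\calP$-arrow $\alpha 1$ with a twisted identity $1_{{}^gx}g$ and showing that both pieces must belong to $\calD$ as soon as the endpoints do, using the fact that such twisted identities are isomorphisms in $\PG$ whose targets are in $\Ob\calD$. Once this closure step is secured, the remainder of the lemma is a routine translation between the categorical structure of $\calQ\rtimes G$ and the combinatorial structure of $\calQ$ inside $\calP$.
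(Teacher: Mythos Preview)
Your proposal is correct and follows essentially the same line as the paper's (very terse) proof: the $G$-invariance of $\Ob\calD$ is forced by convexity via the factorization $1_x=(1_xg^{-1})(1_{{}^gx}g)$, which is exactly the paper's observation that the isomorphism class of $x$ in $\PG$ is the $G$-orbit $G.x$ and hence lies in $\Ob\calD$.

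One clarification: in this paper, Definition~3.3 introduces the notions of ideal, coideal, and convex only for \emph{full} subcategories $\calD$. So the second half of your ``delicate point'' --- that $\calD$ must contain every $\PG$-morphism between its objects --- is already part of the hypothesis, not something to be extracted from convexity. Your proposed mechanism for deriving fullness (decompose $\alpha g=(\alpha 1)(1_{{}^gx}g)$ and argue each factor lies in $\calD$) would in fact not succeed without fullness already assumed, since convexity only tells you the intermediate \emph{object} lies in $\Ob\calD$, not that the factors lie in $\Mor\calD$. With fullness granted, only the $G$-closure of $\Ob\calD$ needs to be checked, and your argument for that is the right one. Your treatment of the three equivalences in the second sentence is more explicit than the paper's and is fine.
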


\begin{proof} If $x\in\Ob\calD$, then the isomorphism class of $x$ is exactly $G\{x\}\subset\Ob\calD$. Thus the subposet $\calQ=\calP\cap\calD$ is a $G$-subposet of $\calP$. It means that $\calD=\calQ\rtimes G$. Since $\calD$ is convex, $\calQ$ has to be convex in $\calP$.
\end{proof}

The preceding lemma implies, that $\QH$ is weakly convex (resp. weak ideal/coideal) is equivalent to that $\QH$ is a (full) convex (resp. ideal/coideal) subcategory of $\calP\rtimes H$, or that $\calQ\subset\calP$ is convex (resp. ideal/coideal). 

\begin{lemma} Let $\QH$ and $\RK$ be two transporter subcategories of $\PG$. Then $(\QH)\cap(\RK)=(\calQ\cap\calR)\rtimes(H\cap K)$ is a transporter subcategory. 

If both $\QH$ and $\RK$ are weakly convex (resp. weak ideal/coideal), so is $(\calQ\cap\calR)\rtimes(H\cap K)$.
\end{lemma}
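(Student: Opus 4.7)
The plan is to handle the two assertions of the lemma in turn, reducing the second to Lemma 3.6 together with the elementary closure of convexity and the ideal/coideal property under intersection.

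For the first assertion, I would compute the intersection directly using the unique normal form for morphisms in $\PG$. On objects we have $\Ob(\QH)\cap\Ob(\RK)=\Ob\calQ\cap\Ob\calR=\Ob(\calQ\cap\calR)$. For morphisms, Definition 3.1 provides the unique expression $\alpha g$ with $\alpha\in\Mor\calP$ and $g\in G$ for every morphism of $\PG$. A morphism $\alpha g$ lies in $\QH$ iff $\alpha\in\Mor\calQ$, $g\in H$, and the source ${}^gs(\alpha)$ and target $t(\alpha)$ lie in $\Ob\calQ$; the analogous statement holds for $\RK$. Intersecting, $\alpha g$ belongs to both iff $\alpha\in\Mor(\calQ\cap\calR)$ and $g\in H\cap K$, with source and target in $\Ob(\calQ\cap\calR)$. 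This is precisely the defining data for $(\calQ\cap\calR)\rtimes(H\cap K)$. One should also check that this right-hand side is a legitimate transporter subcategory: $\calQ\cap\calR$ is visibly a subposet of $\calP$, and for $h\in H\cap K$ and $x\in\calQ\cap\calR$, we have ${}^hx\in\calQ$ (since $h\in H$) and ${}^hx\in\calR$ (since $h\in K$), so $\calQ\cap\calR$ is an $(H\cap K)$-subposet.

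For the second assertion, I invoke the remark following Lemma 3.6, which equates weak convexity (resp. the weak ideal/coideal property) of $\QH$ with convexity (resp. the ideal/coideal property) of $\calQ$ as a subposet of $\calP$. It therefore suffices to verify that if $\calQ$ and $\calR$ are convex (resp. ideal, coideal) subposets of $\calP$, then so is $\calQ\cap\calR$. Each check is immediate: for convexity, if $x\le y\le z$ in $\calP$ with $x,z\in\calQ\cap\calR$, then convexity of $\calQ$ forces $y\in\calQ$ and convexity of $\calR$ forces $y\in\calR$, so $y\in\calQ\cap\calR$; the ideal and coideal cases are identical one-line arguments. Combined with the already-established equality of categories from the first paragraph, this yields the second assertion.

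There is essentially no genuine obstacle here; the statement is structural. The only point that might deserve explicit attention is the compatibility of the two group actions on the intersection, which I handle by observing that the $H$-action on $\calQ$ and the $K$-action on $\calR$ are both restrictions of the ambient $G$-action on $\calP$, so they agree on $H\cap K$ and jointly preserve $\calQ\cap\calR$.
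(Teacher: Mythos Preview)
Your proof is correct and follows essentially the same route as the paper: both arguments exploit the unique normal form $\alpha g$ for morphisms of $\PG$ to identify the intersection on objects and morphisms, and both reduce the second assertion to the elementary fact that convexity (resp.\ the ideal/coideal property) of subposets is closed under intersection. One small slip: the remark you invoke, equating weak convexity of $\QH$ with convexity of $\calQ$ in $\calP$, is the one following Lemma~3.5 (and preceding the present lemma), not Lemma~3.6.
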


\begin{proof} Firstly, the objects of the intersection subcategory form the set $\Ob(\calQ\cap\calR)$. Secondly, any morphism of $\PG$ has a unique way to be written as $\alpha g$, for some $\alpha\in\Mor\calP$ and $g\in G$. Hence if $\alpha g$ belongs to the intersection, we must have $\alpha\in\Mor(\calQ\cap\calR)$ and $g\in H\cap K$. Our first claim follows.

As to the second claim, we see $\calQ\cap\calR$ is convex (resp. an ideal/a coideal) if both $\calQ$ and $\calR$ are.
\end{proof}

We provide methods for constructing weak ideals and co-ideals.

\begin{definition} Let $\calP$ be a $G$-poset. For given subgroups $K\subset H$ and a $K$-subposet $\calQ$, there is a smallest $H$-ideal $\overbrace{{}_H\calQ}$ that contains $\calQ$. We call it the $H$-ideal generated by $\calQ$. Similarly we also have $\underbrace{{}_H\calQ}$ as the $H$-coideal generated by $\calQ$.
\end{definition}

Note that $\underbrace{{}_1\calQ}$ and $\overbrace{{}_1\calQ}$ are simply the smallest coideal and ideal, respectively, that contain $\calQ$. They are often simplified to $\underbrace{\calQ}$ and $\overbrace{\calQ}$. Moreover, if $\calQ$ is a $K$-subposet of $\calP$, then so are $\overbrace{\calQ}$ and $\underbrace{\calQ}$.

From the proposed constructions, we see $\overbrace{{}_H\calQ}\rtimes H$ (resp. $\underbrace{{}_H\calQ}\rtimes H$) is a weak ideal (resp. weak coideal) of $\PG$. We may characterize $\overbrace{{}_H\calQ}\subset\calP$ as follows. Its objects form the set
$$
\{y\in\Ob\calP\bigm{|} \exists\ y\to{}^hx,\ \mbox{for some}\ h\in H, x\in\Ob\calQ\}.
$$
Similarly the objects of $\underbrace{{}_H\calQ}\subset\calP$ form the set
$$
\{y\in\Ob\calP\bigm{|} \exists\  {}^hx\to y,\ \mbox{for some}\ h\in H, x\in\Ob\calQ\}.
$$

As a generalized group, we may define the conjugates of a transporter subcategory of $\PG$.

\begin{definition} Suppose $\QH\subset\PG$ is a transporter subcategory. Given $g\in G$, from $\QH$ we may define another transporter subcategory ${}^g(\QH)$ as follows. Its objects are $\{{}^gx\bigm{|} x\in\Ob(\QH)\}$, and $\Hom_{{}^g(\QH)}({}^gx,{}^gy)=\{{}^g\alpha{}^gh\bigm{|} \alpha h\in\Hom_{\QH}(x,y)\}$. We call ${}^g(\QH)$ a \textit{conjugate} of $\QH$ in $\PG$.
\end{definition}

The conjugate of a transporter subcategory is still a transporter subcategory.

\begin{lemma} Suppose $\QH\subset\PG$ is a transporter subcategory. For each $g\in G$, ${}^g\calQ$ is a ${}^gH$-poset. There is an equality ${}^g(\QH)={}^g\calQ\rtimes{}^gH$. 
\end{lemma}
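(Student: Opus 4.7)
The plan is to unwind Definition 3.7 on both sides and check they coincide; the statement is really a bookkeeping assertion that conjugation by $g$ (as an element of $\PG$) on a transporter subcategory just amounts to applying the $G$-action to the poset and to the subgroup separately. First I would establish that ${}^g\calQ$ is a ${}^gH$-poset. Since $\calP$ is a $G$-poset, $g$ gives a poset automorphism of $\calP$, so ${}^g\calQ$ is a subposet of $\calP$ (with partial order inherited along the $g$-action). For any ${}^gh = ghg^{-1} \in {}^gH$ and any ${}^gx \in {}^g\calQ$, the standard compatibility identity $({}^gh)\cdot({}^gx) = {}^{(ghg^{-1})g}x = {}^{gh}x = {}^g({}^hx)$ shows that ${}^gH$ stabilises ${}^g\calQ$ inside $\calP$, since ${}^hx \in \calQ$. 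Hence ${}^g\calQ$ is a ${}^gH$-poset, and the transporter category ${}^g\calQ \rtimes {}^gH$ is defined.

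Next I would verify the equality of categories by matching objects and morphisms. Both sides have object set $\{{}^gx \mid x \in \Ob\calQ\}$ by definition. For morphisms, Definition 3.7 gives
$$\Hom_{{}^g(\QH)}({}^gx,{}^gy) = \{{}^g\alpha \cdot {}^gh \mid \alpha h \in \Hom_{\QH}(x,y)\},$$
and the transporter-category formula in Definition 3.1 gives
$$\Hom_{{}^g\calQ \rtimes {}^gH}({}^gx,{}^gy) = \{\beta k \mid k \in {}^gH,\ \beta \in \Hom_{{}^g\calQ}({}^k({}^gx),{}^gy)\}.$$
The core computation is: if $\alpha h \in \Hom_{\QH}(x,y)$ with $\alpha \in \Hom_\calQ({}^hx,y)$, then ${}^g\alpha \in \Hom_{{}^g\calQ}({}^g({}^hx),{}^gy)$, and by the identity above ${}^g({}^hx) = {}^{({}^gh)}({}^gx)$, so ${}^g\alpha \cdot {}^gh$ fits the description of a morphism in ${}^g\calQ \rtimes {}^gH$. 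Conversely, every $\beta k \in \Hom_{{}^g\calQ \rtimes {}^gH}({}^gx,{}^gy)$ has $k = {}^gh$ for a unique $h \in H$ and $\beta = {}^g\alpha$ for a unique $\alpha \in \Hom_\calQ({}^hx,y)$, since the $g$-action is a bijection on both objects and morphisms of $\calP$. The assignment $\alpha h \mapsto {}^g\alpha \cdot {}^gh$ is therefore a bijection between the two morphism sets, and it is visibly compatible with composition because composition in both categories is induced by composition in $\calP \rtimes G$ restricted to the respective subcategory.

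The main obstacle, such as it is, is purely notational: one must keep straight the two distinct conjugations in play, namely the group-theoretic conjugation $h \mapsto ghg^{-1} = {}^gh$ inside $G$, versus the $G$-action of $g$ on objects and morphisms of $\calP$. The identity $({}^gh)\cdot({}^gx) = {}^g({}^hx)$ is what links them, and every step above is a direct application of it together with the unique factorisation of morphisms in $\PG$ as $\alpha g$. Once these conventions are separated, the proof is a brief definition chase.
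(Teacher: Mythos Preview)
Your proof is correct and follows essentially the same approach as the paper's: both verify that ${}^g\calQ$ carries a ${}^gH$-action via the identity $({}^gh)\cdot({}^gx)={}^g({}^hx)$ and then check that the two categories share the same objects and morphisms, with your version simply supplying a more explicit definition chase than the paper's two-line argument. (Minor note: the conjugate construction is Definition~3.8 in the paper, not~3.7.)
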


\begin{proof} The ${}^gH$-action on ${}^g\calQ$ is given by ${}^gx\mapsto {}^{gh}x$ on objects, and ${}^g\alpha\mapsto {}^{gh}\alpha$ on morphisms. 

Since the two categories ${}^g(\QH), {}^g\calQ\rtimes{}^gH$ share the same objects and morphisms, we can identify them.
\end{proof}

The conjugate of a weakly convex transporter subcategory (resp. ideal/coideal) stays weakly convex (resp. ideal/coideal). For brevity, we shall write ${}^g\calQ$ for ${}^g(\calQ\rtimes 1)$. It can be identified with a subposet of $\calP$.

To study representations of transporter categories, it is necessary to generalize some other constructions in group theory.

\begin{definition} Let $\QH$ be a transporter subcategory of $\PG$. We define $N_G(\QH)=\{g\in G \bigm{|} {}^g(\QH)=\QH\}$, called the \textit{normalizer} of $\QH$ in $\PG$. It follows that $\calQ$ is a $N_G(\QH)$-poset.

We also define $C_G(\QH)=\{g\in G \bigm{|} {}^g\alpha{}^gh=\alpha h\ \mbox{for all}\ \alpha h\in\QH\}$, called the \textit{centralizer} of $\QH$ in $\PG$, being a subgroup of $N_G(\QH)$. By definition, $\calQ$ is a $C_G(\QH)$-poset with trivial action, which implies $\calQ\rtimes C_G(\QH)=\calQ\times C_G(\QH)$.
\end{definition}

For brevity, we write $N_G(\calP)$ for $N_G(\calP\rtimes 1)$ and $C_G(\calP)$ for $C_G(\calP\rtimes 1)$. Then we find that $H\subset N_G(\QH)= N_G(\calQ)\cap N_G(H)$ and that $C_G(\QH)= C_G(\calQ)\cap C_G(H)$.

\begin{definition} Let $G$ be a finite group and $\calP$ be a $G$-poset. If $H$ is a $p$-subgroup, for a prime $p$ that divides the order of $G$, then we call $\QH$ a \textit{$p$-transporter subcategory}. Particularly when $S$ is a Sylow $p$-subgroup of $G$, we call $\calP\rtimes S$ a \textit{Sylow $p$-transporter subcategory} of $\PG$.
\end{definition}

The following result justifies our terminology.

\begin{proposition} Let $G$ be a finite group and $\calP\rtimes S$ be a Sylow $p$-transporter subcategory of $\PG$. Then, for each $x\in\Ob(\PG)$, $\Aut_{\calP\rtimes S}(x)$ is a Sylow $p$-subgroup of $\Aut_{\PG}(x)$.
\end{proposition}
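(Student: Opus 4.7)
The plan is to identify both automorphism groups with concrete subgroups of $G$ and then reduce to a group-theoretic Sylow statement.

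First I would compute the automorphism groups. Since $\PG$ is an EI category, $\Aut_{\PG}(x)=\End_{\PG}(x)$; an endomorphism $\alpha g$ at $x$ must have $\alpha\in\Hom_\calP({}^gx,x)$. As $\calP$ is a finite poset, iterating $g$ on $x$ collapses the descending chain $x\ge{}^gx\ge{}^{g^2}x\ge\cdots$, forcing ${}^gx=x$ and $\alpha=1_x$. Hence $\alpha g\mapsto g$ identifies $\Aut_{\PG}(x)$ with the stabilizer $G_x$, and the same argument applied inside the subcategory $\calP\rtimes S$ gives $\Aut_{\calP\rtimes S}(x)\cong S\cap G_x$.

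Having done this reduction, the proposition becomes the assertion that for every $x$, the $p$-subgroup $S\cap G_x\subseteq G_x$ is in fact a Sylow $p$-subgroup of $G_x$. The natural approach is to pick any Sylow $P\subseteq G_x$ and view it as a $p$-subgroup of $G$; Sylow's theorem in $G$ supplies $g\in G$ with $gPg^{-1}\subseteq S$, whence $gPg^{-1}\subseteq S\cap G_{gx}$ forces $S\cap G_{gx}$ to be a Sylow of $G_{gx}=gG_xg^{-1}$. This gives the Sylow property at the $G$-translate $gx$.

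The main obstacle is the passage from $gx$ back to the given $x$: the conjugator $g$ produced by Sylow's theorem need not lie in $G_x$, and in general $|S\cap G_x|$ and $|S\cap G_{gx}|$ differ. To secure the conclusion for every $x$ I would replace the global Sylow step by a more refined argument, for instance via the double-coset decomposition $G=\bigsqcup_g SgG_x$ and the associated identity $[G:S]=\sum_g [G_x:S\cap G_{gx}]$: since $[G:S]$ is coprime to $p$, some term on the right must be coprime to $p$, exhibiting \emph{some} $g$ for which $S\cap G_{gx}$ is Sylow in $G_{gx}$. Upgrading ``some $g$'' to ``every $x$'' --- which is the quantifier the proposition actually demands --- requires a further fusion or conjugation step inside $G_x$ itself, showing that $S$ and $g^{-1}Sg$ are already $G_x$-conjugate as Sylows of $G$. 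I expect this lifting, rather than the Sylow application, to be the genuine work of the proof, and in writing it up I would first check on the small case $\calP=\calS_p$ whether the internal structure of $\calP$ (e.g. that objects are themselves $p$-subgroups and the action is by conjugation) provides the needed fusion automatically.
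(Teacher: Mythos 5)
Your reduction to the group-theoretic claim that $S\cap G_x$ should be a Sylow $p$-subgroup of $G_x$ is the right start and matches the paper's opening line, and your suspicion about the quantifier is exactly on target. The ``further fusion or conjugation step'' you were hoping to find does not exist in general: the proposition as literally stated is false. Take $G=S_4$, $p=2$, $S=\langle(1234),(13)\rangle$ (a Sylow $2$-subgroup), and $\calP=G/\langle(12)\rangle$, a transitive $G$-set viewed as a $G$-poset with trivial relations. At the object $x=\langle(12)\rangle$ one has $\Aut_{\PG}(x)\cong G_x=\langle(12)\rangle\cong\ZZ/2$, but $(12)\notin S$, so $\Aut_{\calP\rtimes S}(x)\cong S\cap G_x=1$, which is not a Sylow $2$-subgroup of $\ZZ/2$.

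The paper's own proof has precisely the gap you identified: after choosing $g\in G$ with ${}^gS'\subset S$, it concludes ${}^gS'\subset\Aut_{\calP\rtimes S}({}^gx)$, so that $\Aut_{\calP\rtimes S}({}^gx)$ is Sylow in $\Aut_{\PG}({}^gx)$ (a $p$-subgroup containing a Sylow must equal it) --- but this is a statement about ${}^gx$, not about the originally given $x$, and no transfer back to $x$ is made or, as the example shows, possible. Your double-coset count formalizes the obstruction: $[G:S]=\sum_{g\in[S\backslash G/G_x]}[G_{{}^gx}:S\cap G_{{}^gx}]$, and the left side being prime to $p$ forces \emph{some} summand to be prime to $p$, not all of them. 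The correct reading of the proposition --- and what the proof actually establishes and what suffices to justify the terminology --- is that for each isomorphism class of objects in $\PG$ there exists a representative ${}^gx$ at which $\Aut_{\calP\rtimes S}({}^gx)$ is a Sylow $p$-subgroup of $\Aut_{\PG}({}^gx)$; in your notation, ``some $g$'' is the true statement, and the upgrade to ``every $x$'' should be dropped.
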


\begin{proof} Suppose $S'$ is a Sylow $p$-subgroup of $\Aut_{\PG}(x)\cong\Stab_G(x)$. Then ${}^gS'\subset S$ for some $g\in G$. Since $x\cong{}^gx$ and hence $\Aut_{\PG}({}^gx)\cong\Aut_{\PG}(x)$, ${}^gS'$ is a Sylow $p$-subgroup of $\Aut_{\PG}({}^gx)$. Our first claim follows from the fact that ${}^gS'\subset\Aut_{\calP\rtimes S}({}^gx)\cong\Stab_S(x)$.
\end{proof}

It is easy to see that any two Sylow $p$-transporter subcategories of $\PG$ are conjugate.

\subsection{Enveloping categories and diagonal categories}

These constructions were used in \cite{Xu2}. Let $\calC^e=\calC\times\calC^{op}$ be the product category, between a small category $\calC$ and its opposite, called the \textit{enveloping category}. Given a small category $\calC$, its \textit{category of factorizations} $F(\calC)$ is a small category, whose objects are the morphisms of $\calC$, that is $\Ob F(\calC)=\Mor\calC$. To distinguish, when a morphism $\alpha$ is regarded as an object of $F(\calC)$, we shall use the symbol $[\alpha]$. Let $[\alpha],[\beta]\in \Ob F(\calC)$. There is a morphism from $[\alpha]$ to $[\beta]$ if and only if $\alpha$ is a factor of $\beta$ (as morphisms in $\calC$). More precisely, suppose $\beta=\mu\alpha\gamma$ for $\mu,\gamma\in \Mor\calC$. Then we obtain a morphism $(\mu,\gamma^{op}) : [\alpha] \to [\beta]$ in $F(\calC)$. 

The category of factorization comes with functors $t:F(\calC)\to\calC$, $s:F(\calC)\to\calC^{op}$, and
$$
\delta_{\calC} : F(\calC)\to\calC^e=\calC\times\calC^{op}
$$ 
such that $t([\alpha])=t(\alpha)$, the target of $\alpha$; $s([\alpha])=s(\alpha)$, the source of $\alpha$; $\delta_{\calC}(\alpha)=(t(\alpha),s(\alpha))$ and $\delta_{\calC}(\mu,\gamma^{op})=(\mu,\gamma^{op})$. 

The functor $t: F(\calC)\to\calC$ induces a  homotopy equivalent between classifying spaces $BF(\calC)\simeq B\calC$. See Remark 3.17 for further implications.

If $\calC=G$ is a group, then $F(G)$ is a groupoid, and has its skeleton isomorphic to $G$.

\begin{example} Let $\calP=x{\buildrel{\alpha}\over{\to}} y{\buildrel{\beta}\over{\to}} z$. Then $F(\calP)$ is the poset
$$
\xymatrix{&&[\beta\alpha]&&\\
&[\alpha]\ar[ur]&&[\beta]\ar[ul]&\\
[1_x] \ar[ur]&&[1_y]\ar[ur] \ar[ul]&&[1_z]\ar[ul]}
$$
It is worth of mentioning that the subposet of $F(\calP)$, with the object $[\beta\alpha]$ removed, is not the factorization category of any subposet of $\calP$. 
\end{example}

\begin{lemma} Let $\PG$ be a transporter category. If $\calQ\subset\calP$ is a subposet, then $F(\calQ)\subset F(\calP)$ is a subposet. Indeed $\calQ$ is convex if and only if $F(\calQ)$ is convex. Thus if $\QH$ is a weakly convex transporter subcategory of $\PG$, then $F(\calQ)\rtimes H\subset F(\calP)\rtimes G$ is a weakly convex as well.
\end{lemma}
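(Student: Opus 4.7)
The plan is to establish the three assertions in turn, unpacking the definition of the factorization category.

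For the first assertion, the object set $\Ob F(\calQ)=\Mor\calQ$ is contained in $\Mor\calP=\Ob F(\calP)$, and any morphism $(\mu,\gamma^{op}):[\alpha]\to[\beta]$ in $F(\calQ)$ (encoding $\beta=\mu\alpha\gamma$) is automatically a morphism in $F(\calP)$ since $\mu,\gamma\in\Mor\calQ\subset\Mor\calP$. That $F(\calP)$ is itself a poset follows from the fact that the comparison data $(\mu,\gamma)$ between $[\alpha]$ and $[\beta]$ is forced to be unique (the endpoints $s(\alpha),t(\alpha),s(\beta),t(\beta)$ together with the poset structure of $\calP$ determine $\mu$ and $\gamma$), while endomorphisms $[\alpha]\to[\alpha]$ are identities.

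For the convexity equivalence, I would exploit the composition rule in $F(\calP)$: a composable pair $(\mu_1,\gamma_1^{op}):[\alpha]\to[\alpha']$ and $(\mu_2,\gamma_2^{op}):[\alpha']\to[\beta]$ composes to $(\mu_2\mu_1,(\gamma_1\gamma_2)^{op}):[\alpha]\to[\beta]$. In the direction $(\Rightarrow)$, if this composite lies in $F(\calQ)$ then $\mu_2\mu_1,\gamma_1\gamma_2\in\Mor\calQ$, and applying convexity of $\calQ$ to these two factorizations forces $t(\mu_1)=s(\alpha')$ and $s(\gamma_2)=t(\alpha')$ into $\Ob\calQ$; since $\calQ\subset\calP$ is a full subposet, the unique morphism $\alpha'$ in $\calP$ between these endpoints already lies in $\calQ$, so $[\alpha']\in\Ob F(\calQ)$.

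In the direction $(\Leftarrow)$, given $\beta\alpha\in\Mor\calQ$ with $\alpha:x\to y$ and $\beta:y\to z$ in $\calP$, I would construct the explicit composable pair $(\alpha,1_x^{op}):[1_x]\to[\alpha]$ and $(\beta,1_x^{op}):[\alpha]\to[\beta\alpha]$ in $F(\calP)$, whose composite $(\beta\alpha,1_x^{op}):[1_x]\to[\beta\alpha]$ lies in $F(\calQ)$ because $x\in\Ob\calQ$, $\beta\alpha\in\Mor\calQ$, and the comparison data $(\beta\alpha,1_x)$ is in $\Mor\calQ$. Convexity of $F(\calQ)$ then places $[\alpha]\in\Ob F(\calQ)$, i.e.\ $\alpha\in\Mor\calQ$, whence $y=t(\alpha)\in\Ob\calQ$. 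The last assertion is then essentially bookkeeping: the $G$-action on $\calP$ lifts to $F(\calP)$ by $g\cdot[\alpha]=[{}^g\alpha]$ and $g\cdot(\mu,\gamma^{op})=({}^g\mu,({}^g\gamma)^{op})$, the $H$-invariance of $\calQ$ passes to $F(\calQ)$, and combined with the convexity just proved this is precisely the condition for $F(\calQ)\rtimes H$ to be weakly convex in $F(\calP)\rtimes G$. The main obstacle is the $(\Rightarrow)$ direction of the convexity equivalence, where one must resist reasoning about $\alpha'$ directly and instead invoke convexity of $\calQ$ twice, once in each slot of the factorization datum, before promoting the resulting pair of objects back to a morphism of $\calQ$ via fullness.
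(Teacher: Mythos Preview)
Your argument is correct and follows essentially the same route as the paper: the $(\Leftarrow)$ direction uses exactly the chain $[1_x]\to[\alpha]\to[\beta\alpha]$ that the paper uses, and the final assertion is dispatched the same way. The paper leaves the $(\Rightarrow)$ direction as ``easily verified,'' whereas you spell it out by applying convexity of $\calQ$ separately to $\mu_2\mu_1$ and $\gamma_1\gamma_2$; this is the right idea. One small slip: in your $(\Rightarrow)$ argument the labels are swapped---since $\alpha'=\mu_1\alpha\gamma_1$ one has $t(\mu_1)=t(\alpha')$ and $t(\gamma_2)=s(\alpha')$, not the other way around---but this does not affect the conclusion that both endpoints of $\alpha'$ land in $\Ob\calQ$.
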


\begin{proof} If $\calQ$ is a subposet, then by definition $\Ob F(\calQ)=\Mor\calQ$ is a subset of $\Ob F(\calP)=\Mor\calP$. If $[\alpha],[\beta]\in\Ob F(\calQ)$ and $(u,v) : [\alpha]\to[\beta]$ is a morphism in $F(\calP)$, then $(u,v)\in\Mor F(\calQ)$ because the sources and targets of both $\alpha$ and $\beta$ are in $\calQ$. 

If furthermore $\calQ$ is convex, we easily verify that $F(\calQ)$ is convex. Conversely assume $F(\calQ)$ to be convex. Let $x{\buildrel{\alpha}\over{\to}} y{\buildrel{\beta}\over{\to}} z$ be two morphisms in $\calP$ with $x, z\in\Ob\calQ$. We want to show $y$ belongs to $\calQ$ too. But $1_x$ is a factor of $\alpha$ and $\alpha$ is a factor of $\beta\alpha$. We obtain two morphisms $[1_x]\to[\alpha]\to[\beta\alpha]$ in $\Mor F(\calP)$. Since both $[1_x]$ and $[\beta\alpha]$ belong to $F(\calQ)$, so does $[\alpha]$ which implies that its target belong to $\Ob\calQ$.

Suppose $\QH$ is a weakly convex transporter subcategory. Then $\calQ$ is convex and thus $F(\calQ)$ is convex. Our last claim follows.
\end{proof} 

Suppose $\PG$ is transporter category. Then $G^e$ is a group (which is isomorphic to $G\times G$) and $\calP^e$ admits a $G^e$-action, given by ${}^{(g,h^{op})}(x,y)=({}^gx,{}^{h^{-1}}y)$ on objects, and ${}^{(g,h^{op})}(\alpha,\beta^{op})=({}^g\alpha,({}^{h^{-1}}\beta)^{op})$.

The enveloping category is also a transporter category.

\begin{lemma} There is an isomorphism of categories $\calP^e\rtimes G^e\cong (\PG)^e$.
\end{lemma}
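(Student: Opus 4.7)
The plan is to write down an explicit isomorphism of categories on the nose. Both sides have object set $\Ob\calP$, so the functor $F : \calP^e\rtimes G^e\to(\PG)^e$ is the identity on objects; the substance is a hom-set bijection respecting composition. A morphism $(x,y)\to(x',y')$ in $\calP^e\rtimes G^e$ is a tuple $(\alpha,\beta^{op})(g,h^{op})$ with $\alpha : {}^gx\to x'$ and $\beta : y'\to {}^{h^{-1}}y$ in $\calP$; a morphism in $(\PG)^e=(\PG)\times(\PG)^{op}$ is a pair $(\phi,\psi^{op})$ with $\phi = \mu g : x\to x'$ and $\psi = \gamma k : y'\to y$ in $\PG$.

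I define
$$
F\bigl((\alpha,\beta^{op})(g,h^{op})\bigr) := \bigl(\alpha g,\;({}^h\beta\cdot h)^{op}\bigr),
$$
using that $\beta : y'\to {}^{h^{-1}}y$ yields ${}^h\beta : {}^hy'\to y$, so ${}^h\beta\cdot h$ is genuinely a morphism $y'\to y$ in $\PG$. The candidate inverse, $(\mu g,(\gamma k)^{op})\mapsto (\mu,({}^{k^{-1}}\gamma)^{op})(g,k^{op})$, is visibly two-sided, so $F$ is bijective on hom sets, and the identity $(1_x,1_y^{op})(1,1^{op})$ is sent to $(1_x\cdot 1,(1_y\cdot 1)^{op})$, so $F$ preserves identities.

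The remaining task --- and the main obstacle in the sense of care rather than substance --- is preservation of composition. In $\calP^e\rtimes G^e$ the rule is the Grothendieck semidirect product, $(\alpha_1,\beta_1^{op})(g_1,h_1^{op})\circ(\alpha_2,\beta_2^{op})(g_2,h_2^{op}) = (\alpha_1\cdot{}^{g_1}\alpha_2,\;({}^{h_1^{op}}\beta_2^{op})\beta_1^{op})(g_1g_2,h_1^{op}h_2^{op})$; in $(\PG)^e$ the first coordinate obeys the skew-group rule $(\mu_1 g_1)(\mu_2 g_2)=(\mu_1\cdot{}^{g_1}\mu_2)(g_1g_2)$ while the second obeys the same rule read in $(\PG)^{op}$. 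Applying $F$ and carefully tracking how $h^{op}$ acts on $\calP^{op}$ (it is ${}^{h^{-1}}$ on objects and ${}^{h^{-1}}\beta$ on morphisms), together with the reversal of composition order when passing to $(-)^{op}$, reduces the check to a few identities of the form ${}^{h_1h_2}\beta={}^{h_1}({}^{h_2}\beta)$.

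More conceptually, the Grothendieck construction commutes with products of $G$-categories: $(\calP_1\times\calP_2)\rtimes(G_1\times G_2)\cong(\calP_1\rtimes G_1)\times(\calP_2\rtimes G_2)$. Combined with the obvious identification $(\PG)^{op}\cong\calP^{op}\rtimes G^{op}$ (sending $\alpha g$ to $({}^{g^{-1}}\alpha)^{op}(g^{-1})^{op}$), this gives $\calP^e\rtimes G^e=(\calP\times\calP^{op})\rtimes(G\times G^{op})\cong(\PG)\times(\PG)^{op}=(\PG)^e$ without further computation. This is the categorical shadow of the author's earlier algebraic observation that $(\SG)^e\cong S^e\rtimes G^e$.
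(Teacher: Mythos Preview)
Your proof is correct and follows essentially the same approach as the paper: both define the identity-on-objects functor with the morphism assignment $(\alpha,\beta^{op})(g,h^{op})\mapsto(\alpha g,({}^h\beta\, h)^{op})$, and the paper simply asserts this is an isomorphism while you spell out the inverse and the composition check. One small slip: the shared object set is $\Ob\calP^e=\Ob\calP\times\Ob\calP$, not $\Ob\calP$ (you clearly know this, since you immediately work with pairs $(x,y)$); your closing product-decomposition remark is a nice conceptual addition not present in the paper.
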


\begin{proof} Since both categories have the same objects as $\calP^e$, we define a functor $\calP^e\rtimes G^e \to (\PG)^e$ to be identity on objects. On morphisms, it is defined by the assignment
$$
(\alpha,\beta^{op})(g,h^{op})\mapsto (\alpha g,({}^h\beta h)^{op}). 
$$
One can readily verify that this functor is an isomorphism between categories.
\end{proof}

The category equivalence $G\to F(G)$ composes with $F(G)\to G^e$ gives the well-known functor $\delta_G: G\to G^e$ (abusing notations). Thus $G$ acts on $\calP^e$ via $\delta_G(G)=\{(g,(g^{-1})^{op})\bigm{|} g\in G\}\subset G^e$. 

\begin{lemma} Let $\calP$ be a $G$-poset. The functor $\delta_{\calP} :F(\calP)\to\calP^e$ is an embedding of posets. Furthermore $F(\calP)$ is a $\delta(G)$-poset. Thus we may identify $F(\calP)$ with a coideal $G$-subposet of $\calP^e$. Consequently, $F(\calP)\rtimes\delta(G)$ is identified with a coideal transporter subcategory of $\calP^e\rtimes 
\delta(G)$.
\end{lemma}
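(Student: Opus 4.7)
The plan is to verify the assertions in sequence: $\delta_\calP$ is a poset embedding, $F(\calP)$ is a $\delta(G)$-poset, its image in $\calP^e$ is a coideal, and finally the transporter-category version. Each step is essentially an unpacking of definitions.

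For the embedding, first observe that $F(\calP)$ is itself a poset because a morphism $(\mu,\gamma^{op}):[\alpha]\to[\beta]$ in $F(\calP)$ is determined by $\mu:t(\alpha)\to t(\beta)$ and $\gamma:s(\beta)\to s(\alpha)$, each uniquely determined by its source and target in the poset $\calP$. Injectivity of $\delta_\calP$ on objects is the statement that a morphism in a poset is determined by its source and target. For fullness, a morphism $(\mu,\gamma^{op}):(t(\alpha),s(\alpha))\to(t(\beta),s(\beta))$ in $\calP^e$ yields $s(\beta)\le s(\alpha)\le t(\alpha)\le t(\beta)$, so $\mu\alpha\gamma=\beta$ automatically (both are the unique morphism $s(\beta)\to t(\beta)$), and $(\mu,\gamma^{op})$ lifts to a morphism $[\alpha]\to[\beta]$ in $F(\calP)$.

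Next, restrict the $G^e$-action on $\calP^e$ along $\delta(G)\hookrightarrow G^e$: this gives the diagonal action ${}^{(g,(g^{-1})^{op})}(y,x)=({}^gy,{}^gx)$, which preserves the image $\{(y,x)\bigm|x\le y\}$ of $\delta_\calP$, so $F(\calP)$ inherits a $\delta(G)$-action sending $[\alpha]$ to $[{}^g\alpha]$, and $\delta_\calP$ becomes equivariant. For the coideal property, given $(y,x)$ with $x\le y$ and a morphism $(y,x)\to(y',x')$ in $\calP^e$, one reads off $y\le y'$ and $x'\le x$, whence $x'\le x\le y\le y'$ places $(y',x')$ back in the image. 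The transporter-category statement then follows from the equivalence (noted after the earlier lemma on convex subcategories of $\calP\rtimes G$) that a transporter subcategory is a weak coideal of $\calP^e\rtimes\delta(G)$ if and only if its underlying $\delta(G)$-subposet of $\calP^e$ is a coideal.

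I anticipate no serious obstacle: the only care required is with the direction reversals in $\calP^{op}$ and the specialisation of the $G^e$-action to $\delta(G)$ via the inclusion $\delta(G)\hookrightarrow G^e$.
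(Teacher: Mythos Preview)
Your proposal is correct and follows essentially the same approach as the paper's own proof, which is quite terse (asserting that the embedding is ``easy to verify'' and that the remaining claims are ``readily check[ed]''). You simply fill in the details the paper omits: the explicit verification of injectivity and fullness for $\delta_\calP$, the diagonal $\delta(G)$-action preserving the image $\{(y,x)\mid x\le y\}$, and the coideal condition, before invoking the equivalence following Lemma~3.5 to pass to the transporter-category level.
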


\begin{proof} That $\delta_{\calP} : F(\calP)\to\calP^e$ is an embedding of posets is easy to verify. Indeed $F(\calP)$ is identified with the subposet $\delta_{\calP}(F(\calP))$ of $\calP^e$ consisting of objects $(y,x)$ such that $\Hom_{\calP}(x,y)\ne\emptyset$. The subposet is furthermore a coideal in $\calP^e$. We readily check that $F(\calP)$ can even be regarded as a $\delta(G)$-subposet of $\calP^e$. Then $F(\calP)\rtimes\delta(G)$ is a full subcategory of $\calP^e\rtimes\delta(G)$.
\end{proof}

We emphasize that $F(\calP)$ is not necessarily a $G^e$-poset. Nonetheless, we obtain the following embeddings of transporter categories
$$
F(\calP)\rtimes\delta(G) {\buildrel{\delta_{\calP}}\over{\longrightarrow}} \calP^e\rtimes\delta(G) {\buildrel{\delta_G}\over{\longrightarrow}} \calP^e\rtimes G^e\cong (\PG)^e.
$$
The two categories on the left are weakly convex inside $(\PG)^e$. We shall call $F(\calP)\rtimes\delta(G)$ the \textit{diagonal transporter subcategory} of $(\PG)^e$. It plays a key role in our block theory.

\begin{remark} Let $\PG$ be a transporter category. The functor $t : F(\calP)\to\calP$ induces a homotopy equivalence between the classifying spaces $BF(\calP)\simeq B\calP$, which further gives rise to a homotopy equivalence $B(F(\calP)\rtimes\delta(G))\simeq B(\PG)$, because they are homotopy equivalent to the Borel constructions $EG\times_G BF(\calP)$ and $EG\times_G B\calP$, respectively. It has the consequence that $F(\calP)\rtimes\delta(G)$ and $\PG$ have the same number of connected components.

This fact will be useful in our investigation of block theory.
\end{remark}

\subsection{Category algebras} We want to study the representations of a transporter category. The concept of a category algebra is key to us, see \cite{Xu1} for an account. To this end, we shall recall some basics about category algebras. Let $\calC$ be a small category and $R$ be a commutative ring with identity. Then the category algebra $R\calC$ is defined to be the free $R$-module $R\Mor\calC$, with multiplication determined by composites of morphisms of $\calC$.

\begin{theorem}[Mitchell] Let $\calC$ be a small category such that $\Ob\calC$ is finite. If $R$ is a commutative ring with identity, then $(R\mbox{-mod})^{\calC}\simeq R\calC$-mod.
\end{theorem}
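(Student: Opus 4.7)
The plan is to construct explicit quasi-inverse functors $\Phi : (R\text{-mod})^{\calC} \to R\calC\text{-mod}$ and $\Psi : R\calC\text{-mod} \to (R\text{-mod})^{\calC}$, and verify that their compositions are naturally isomorphic to the identity. For $\Phi$, given a functor $F : \calC \to R\text{-mod}$, I set $\Phi(F) = \bigoplus_{x \in \Ob\calC} F(x)$ as an $R$-module, and specify an $R\calC$-action using the $R$-basis $\Mor\calC$ of $R\calC$: a morphism $\alpha : x \to y$ sends the component $F(z) \subset \Phi(F)$ to zero for $z \ne x$, and sends $m \in F(x)$ to $F(\alpha)(m) \in F(y)$. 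Associativity of this action is exactly the functoriality identity $F(\beta\alpha) = F(\beta)F(\alpha)$, while the element $\sum_{x \in \Ob\calC} 1_x$ --- a \emph{finite} sum, and the identity element of $R\calC$ --- acts as the identity on $\Phi(F)$. A natural transformation $\eta : F \Rightarrow F'$ produces the $R\calC$-linear map $\Phi(\eta) = \bigoplus_x \eta_x$.

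For $\Psi$, given an $R\calC$-module $M$, I set $\Psi(M)(x) = 1_x M$; since $\alpha = 1_y \alpha 1_x$ in $R\calC$ for any $\alpha \in \Hom_{\calC}(x,y)$, left multiplication by $\alpha$ restricts to an $R$-linear map $1_x M \to 1_y M$, and this defines $\Psi(M)(\alpha)$. Functoriality of $\Psi(M)$ in $\calC$ follows from associativity of the $R\calC$-action, and an $R\calC$-linear map $f : M \to M'$ restricts to a family of $R$-linear maps $1_x M \to 1_x M'$ giving a natural transformation $\Psi(f)$.

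To conclude, for $\Psi\Phi \cong \Id$ one computes $\Psi(\Phi(F))(x) = 1_x \cdot \bigoplus_z F(z) = F(x)$, and checks that for $\alpha : x \to y$ the induced map agrees with $F(\alpha)$, so the identity is a natural isomorphism. For $\Phi\Psi \cong \Id$, the canonical summation map $\bigoplus_x 1_x M \to M$ is $R\calC$-linear, and it is bijective because $\{1_x\}_{x \in \Ob\calC}$ is a family of pairwise orthogonal idempotents summing to the identity of $R\calC$. The only place where the hypothesis is used, and the single point that could be called an obstacle, is precisely the finiteness of $\Ob\calC$: this is what ensures that $\sum_x 1_x$ is a well-defined element of $R\calC$ and serves as its multiplicative identity, and consequently that every $R\calC$-module decomposes as $M = \bigoplus_x 1_x M$. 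Without this hypothesis one would have to restrict to ``unital'' modules or pass to the non-unital setting; granted it, the remainder of the argument is routine bookkeeping.
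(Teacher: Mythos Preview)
Your proof is correct and is essentially the standard argument. Note, however, that the paper does not supply its own proof of this statement: it is quoted as a classical result of Mitchell (see \cite{Xu1} for an account), so there is nothing to compare against beyond observing that your construction of $\Phi$ and $\Psi$ is the canonical one underlying Mitchell's original equivalence.
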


The constant functor $\R : \calC\to R$-mod corresponds to the $R\calC$-module afforded by the free $R$-module $R\Ob\calC$. We shall call $\R$ the \textit{trivial} $k\calC$-module. It plays the role of $R$ in group representations. The trivial module is indecomposable if and only if $\calC$ is connected.

If two small categories are equivalent, then their category algebras are Morita equivalent. Note that, although $BF(\calC)\simeq B\calC$, $RF(\calC)$ is not Morita equivalent to $R\calC$, as $RF(\calC)$ almost always has more simple modules (up to isomorphism). 

It is straightforward to verify that $k\calC^e\cong(k\calC)^e=(k\calC)\otimes_k(k\calC)^{op}$. The ``diagonal subalgebra'' of $(k\PG)^e\cong k\calP^e\rtimes G^e$, in the sense of Boisen, is a transporter category algebra, by the following result.

\begin{lemma} There are isomorphisms of algebras $k\calC^e\cong(k\calC)^e=k\calC\otimes_k(k\calC)^{op}$. In the case of $\calC=\PG$, we have
$$
k\calP^e\rtimes\delta(G) \cong \Delta(k\PG).
$$
\end{lemma}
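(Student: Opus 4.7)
The plan is to prove both isomorphisms by unwinding how the category-algebra construction behaves under products and Grothendieck constructions: the former turns into a tensor product of algebras, the latter into a skew group algebra. Combining these two compatibilities with the definition of Boisen's diagonal subalgebra from Section 2 will give the result without any computational heavy lifting.

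For the first isomorphism, I would define a $k$-linear map $\Phi : k\calC^e \to k\calC\otimes_k (k\calC)^{op}$ on the canonical basis by $(\alpha,\beta^{op})\mapsto \alpha\otimes\beta^{op}$. Since $\Mor(\calC\times\calC^{op})=\Mor\calC\times\Mor\calC^{op}$ is in bijection with the tensor basis on the right, $\Phi$ is manifestly a $k$-linear isomorphism. The composition rule in $\calC^e$ is componentwise, and the convention that incomposable morphisms multiply to $0$ in a category algebra matches the tensor-product multiplication on both sides; hence $\Phi$ is an algebra map. Note also that $k(\calC^{op})=(k\calC)^{op}$ is immediate from definitions, so the right-hand side is indeed $(k\calC)^e$.

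For the second isomorphism, I would first record the general principle that for a $G$-category $\calA$ there is a canonical algebra isomorphism $k(\calA\rtimes G)\cong k\calA\rtimes G$, obtained by matching the basis $\{\alpha g\}$ of $\Mor(\calA\rtimes G)$ with the basis of the skew group algebra and checking that the rule $(\alpha g)(\beta h)=(\alpha\,{}^g\beta)(gh)$ from Definition~3.1 coincides with the skew product $(\alpha g)(\beta h)=\alpha({}^g\beta)gh$. Applying this with $\calA=\calP^e$, viewed as a $\delta(G)$-category via the embedding $\delta(G)\subset G^e$ (Lemma 3.15), together with the first isomorphism of the lemma, yields
\begin{equation*}
k\bigl(\calP^e\rtimes\delta(G)\bigr)\;\cong\;k\calP^e\rtimes\delta(G)\;\cong\;(k\calP)^e\rtimes\delta(G).
\end{equation*}
Finally, the transporter category algebra $k\PG$ is the fully $G$-graded skew group algebra $k\calP\rtimes G$ with $g$-component $(k\PG)_g=(k\calP)g$. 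Substituting this into Boisen's definition from Section 2 gives
\begin{equation*}
\Delta(k\PG)=\bigoplus_{g\in G}(k\calP)g\otimes_k (g^{-1})^{op}(k\calP)^{op}\cong (k\calP)^e\rtimes\delta(G),
\end{equation*}
which chains onto the previous display.

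The only step requiring genuine care is the verification that the $\delta(G)$-action on $\calP^e$ inherited from $G^e$ matches, after passing to $k$-algebras, the conjugation action implicit in Boisen's diagonal subalgebra. Concretely, one must check that the formula ${}^{(g,(g^{-1})^{op})}(\alpha,\beta^{op})=({}^g\alpha,({}^g\beta)^{op})$ produces precisely the multiplicative compatibility that makes $Sg\otimes(g^{-1})^{op}S^{op}$ closed under multiplication in $\Delta(k\PG)$. Once this bookkeeping is done, every identification above is formal and the lemma follows.
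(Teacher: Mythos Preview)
Your proposal is correct. The paper's own proof is terser: for the first isomorphism it simply declares it ``known and easy to produce,'' and for the second it writes down the explicit map on base elements
\[
(\alpha\otimes\beta^{op})(g,(g^{-1})^{op}) \;\longmapsto\; (\alpha g)\otimes({}^{g^{-1}}\beta\, g^{-1})
\]
and leaves the algebra-homomorphism check to the reader. Your route instead factors the second isomorphism through two general compatibilities---the identification $k(\calA\rtimes G)\cong k\calA\rtimes G$ of a transporter-category algebra with a skew group algebra, and Boisen's formula $\Delta(S\rtimes G)\cong S^e\rtimes\delta(G)$ already recorded in Section~2---together with the first part of the lemma. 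The paper's approach is quicker to state but opaque; yours explains \emph{why} the explicit map is an algebra isomorphism and correctly isolates the one substantive verification (that the $\delta(G)$-action on $\calP^e$ inherited from $G^e$ matches the conjugation implicit in $\Delta$), which is precisely what the paper's displayed formula is encoding.
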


\begin{proof} The first isomorphism is known and is easy to produce. For the second, we define a map on the base elements
$$
(\alpha\otimes\beta^{op})(g,g^{-1}) \mapsto (\alpha g)\otimes({}^{g^{-1}}\beta g^{-1}),
$$
and then extend it linearly.
\end{proof}

Suppose $\calC$ is an EI category. When $\calD\subset\calC$ is convex, we can regard every $k\calD$-module as a $k\calC$-module. It partially explains why convex subcategories (weakly convex subcategories for a transporter subcategory) play an important role in our theory.

\begin{definition} Let $\calC$ be an EI category. Given a $k\calC$-module $M$, an object $x\in\calC$ is said to be $M$-\textit{minimal} if for any $y\in\Ob\calC$ admitting a non-isomorphism $y\to x$, we must have $M(y)=0$. Similarly, an object $z\in\calC$ is said to be $M$-\textit{maximal} if for any $y\in\Ob\calC$ admitting a non-isomorphism $z\to y$, we must have $M(y)=0$. 

We define $\underbrace{\calC_M}$ to be the coideal whose minimal objects are exactly all the $M$-minimal objects. We also define $\overbrace{\calC_M}$ to be the ideal whose maximal objects are exactly all the $M$-maximal objects. We define the \textit{support} of $M$ to be the full subcategory $\supp M$ consisting of objects $\{x\in\Ob\calC\bigm{|}M(x)\ne 0\}$. We define the \textit{convex support} $\supp^c M$  of $M$ to be $\underbrace{\calC_M}\cap\overbrace{\calC_M}$, which is the convex hull of $\supp M$, the smallest convex subcategory of $\calC$ that contains $\supp M$.
\end{definition}

When there is no confusion, sometimes we call the set $\Ob(\supp M)$ (or $\Ob(\supp^c M)$) the support (or convex support) of $M$. 

\begin{remark} We are interested in the representation theory of $\calC=\PG$. If $N\in k\QH$-mod for a transporter subcategory $\QH\subset\PG$, then, by Lemma 3.5, $\supp N=\calQ_N\rtimes H$ for a unique subposet $\calQ_N$, and $\supp^cN=\calQ^c_N\rtimes H$. Here $\calQ_N=\supp(N\downarrow_{\calQ\rtimes 1})$ and $\calQ^c_N=\supp^c(N\downarrow_{\calQ\rtimes 1})$. Since $\supp N$ (resp. $\supp^c_N$) and $\supp(N\downarrow_{\calQ\rtimes 1})$ (resp. $\supp^c(N\downarrow_{\calQ\rtimes 1})$) determine each other and share the same objects, we may also refer to the latter as the support (resp. the convex support) of $N$.

The two $G$-coideals $\underbrace{{}_G(\calQ_N)}$ and $\underbrace{{}_G(\calQ^c_N)}$ of $\calP$ (see Definition 3.7) are identical, because $\calQ_N$ shares the same minimal objects with $\calQ^c_N$.
\end{remark}

We shall prove that for an indecomposable $k\PG$-module $M$, there is ``no hole'' in its convex support, in the sense that $M(x)\ne 0$ for every $x\in\supp^c M$. In other words, for an indecomposable $M$, $\supp M=\supp^c M$. It explains why we introduce the concept of the convex support, and it will be used when we develop a theory of vertices and sources later on.

\begin{lemma} Suppose $\calP$ is a poset and $M$ is an indecomposable $k\calP$-module. Then for any $x\in\Ob\supp^cM$, $M(x)\ne 0$.
\end{lemma}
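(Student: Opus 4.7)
The plan is to argue by contradiction. Suppose $M$ is indecomposable, $x \in \supp^c M$, and $M(x) = 0$. Since $\supp^c M$ is the convex hull of $\supp M$ in $\calP$, there exist $y, z \in \supp M$ with $y \le x \le z$, necessarily strict because $M(x) = 0$. The crucial poset-theoretic fact is that any morphism $a \to b$ in $\calP$ with $a \le x \le b$ factors uniquely through $x$, so by functoriality $M(a \to b) = M(x \to b) \circ M(a \to x) = 0$.

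Consider the subfunctor $N \subset M$ generated by $\{M(v) : v \le x\}$, namely $N(w) = \sum_{v \le x,\ \alpha : v \to w} M(\alpha)(M(v))$. The observation above gives $N(v) = M(v)$ for $v \le x$ and $N(w) = 0$ for $w > x$. In particular $N$ is a nonzero proper subfunctor, since $N(y) = M(y) \ne 0$ while $(M/N)(z) = M(z) \ne 0$. The strategy is to exhibit an idempotent natural transformation $e : M \to M$ with image $N$, giving a direct sum decomposition $M \cong N \oplus (M/N)$ that contradicts indecomposability. Set $e_v = \mathrm{id}_{M(v)}$ for $v \le x$ and $e_w = 0$ for $w > x$. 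For $w$ incomparable to $x$, let $e_w$ be an idempotent of $M(w)$ whose image is $P(w) := \bigcap_{w' > x,\ \alpha : w \to w'} \ker M(\alpha)$ and which is the identity on $N(w)$; the inclusion $N(w) \subseteq P(w)$ is a direct consequence of the key observation (any element of $N(w)$, being the image of some $v\le x$, maps to $M(w')$ for $w'>x$ through $x$, and hence to zero).

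Checking naturality $M(\alpha) e_u = e_v M(\alpha)$ is straightforward in every case where $\alpha : u \to v$ has at least one endpoint comparable to $x$. The main obstacle is naturality along morphisms whose endpoints are both incomparable to $x$, which amounts to choosing the complements $\ker e_w$ so that they assemble into a subfunctor on the full subcategory of $\calP$ on objects incomparable to $x$. I would handle this by a secondary induction on $|\Ob \calP|$: if no element of $\supp M$ is incomparable to $x$, then $M$ decomposes immediately as the internal direct sum of its restrictions to the ideal $\calP_{\le x}$ and the coideal $\calP_{> x}$ (both extensions by zero are bona fide submodules of $M$ thanks to the key observation), already yielding a contradiction; otherwise the relevant incomparable-to-$x$ subposet is strictly smaller than $\calP$ and the induction supplies the compatible choice of complements. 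The resulting $e$ is nontrivial because $e_y = \mathrm{id} \ne 0$ and $e_z = 0 \ne \mathrm{id}$, completing the argument.
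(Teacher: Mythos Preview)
Your key observation is correct and important: whenever $a \le x \le b$, the map $M(a \to b)$ factors through $M(x) = 0$ and hence vanishes. Your treatment of the case in which every element of $\supp M$ is comparable to $x$ is also fine: the extensions by zero of $M|_{\calP_{\le x}}$ and $M|_{\calP_{>x}}$ are genuine subfunctors (using the key observation together with the hypothesis that $M$ vanishes on incomparable objects), and they give a nontrivial direct sum decomposition of $M$.

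The gap lies in the remaining case. Your ``secondary induction on $|\Ob\calP|$'' does not do what you need it to. The inductive hypothesis is the lemma itself for smaller posets, and that statement only tells you that indecomposable modules over smaller posets have no zeros in their convex supports; it says nothing about how to choose complements to the subspaces $P(w)$ coherently across the full subposet $\calP^{\perp x}$ of objects incomparable to $x$. Concretely, if $u,v\in\calP^{\perp x}$ with $u \le v$, naturality requires $M(u\to v)(\ker e_u) \subseteq \ker e_v$. For a single arrow this can be arranged (since $M(u\to v)(\ker e_u)\cap P(v)=0$), but when $\calP^{\perp x}$ is more complicated you must make all these choices simultaneously. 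That is exactly the splitting problem for the short exact sequence $0 \to L \to M \to M/L \to 0$, where $L$ is your subfunctor with $L(w)=P(w)$ on $\calP^{\perp x}$, and nothing in your induction resolves it. Saying ``the incomparable subposet is strictly smaller'' does not help, because there is no distinguished object in $\calP^{\perp x}$ playing the role of $x$ to which the lemma could be applied.

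By contrast, the paper avoids any splitting over $\calP^{\perp x}$ altogether: it passes to the projective cover $\pi\colon P_M\to M$, uses $(\ker\pi)(x)=P_M(x)$ and the fact that in a poset the structure maps of each indecomposable projective are isomorphisms above their base object, and concludes $M(z)=0$ for all $z\ge x$, contradicting $\supp^cM=\calP$.
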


\begin{proof} If $\calP$ has several components, then $\supp^cM$ must be contained in exactly one of the components. Without loss of generality, we may assume $\calP$ is connected and suppose $\supp^c M=\calP$. Assume there is some $x\in\Ob\calP$ such that $M(x)=0$. By the assumption, $x$ can not be either maximal or minimal. We may consider its projective cover $P_M$. Let $\pi: P_M \to M$ be the epimorphism. Then $(\ker\pi)(x)=P_M(x)$. But $P_M\cong \oplus_iP_{y_i}^{n_i}$ for $M$-minimal objects $y_i$ (all satisfying $y_i\le x$). Since, in a poset, between any two objects there is at most one morphism, it forces $(\ker\pi)(z)=P_M(z)$ for all $z\ge x$. In turn it implies $M(z)=0$ for all $z\ge x$, a contradiction to $\supp^cM=\calP$.
\end{proof}

\begin{proposition} Let $\PG$ be a connected transporter category and $M$ an indecomposable $k\PG$-module. Then for any $x\in\supp^c M$, $M(x)\ne 0$. Thus $\supp M=\supp^cM$.
\end{proposition}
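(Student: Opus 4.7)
The plan is to adapt the strategy of Lemma 3.22 (the poset case) to the transporter-category setting, with $G$-equivariance as the new ingredient. First I make the standard reductions. If $\PG$ has several components, then $\supp^c M$ lies in exactly one of them, so we may assume $\PG$ is connected. By Lemma 3.5 we may write $\supp^c M=\calQ^c_M\rtimes G$ for a convex $G$-subposet $\calQ^c_M\subseteq\calP$; viewing $M$ as a module over the weakly convex subcategory $\supp^c M$ (which is still indecomposable), we may further assume $\calQ^c_M=\calP$.

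Then I argue by contradiction: suppose $M(x)=0$ for some $x\in\Ob\calP$. Since each $g\in G$ induces an isomorphism $M(x)\cong M({}^gx)$, we have $M({}^gx)=0$ for every $g$, and as in the poset case $x$ is neither minimal nor maximal in $\calP$. Now decompose the restriction $M\downarrow_\calP=\bigoplus_i M'_i$ into indecomposable $k\calP$-modules. The group $G$ permutes the $M'_i$ up to isomorphism, and any partition of this $G$-set into two $G$-stable pieces would produce a direct sum decomposition of $M$ as a $k\PG$-module; so by indecomposability of $M$ all the $M'_i$ lie in a single $G$-orbit. Applying Lemma 3.22 to each indecomposable $M'_i$ yields that $\supp M'_i=\supp^c M'_i$ is a convex subposet of $\calP$, and these convex subposets are pairwise $G$-conjugate.

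The decisive step, which I expect to be the main obstacle, is to show that $\supp M=\bigcup_i\supp M'_i$ is itself convex in $\calP$, so that it already equals the convex hull $\calQ^c_M=\calP$ and forces $M(x)\ne 0$. For a hypothetical witness $y\le x\le z$ in $\calP$ with $y\in\supp M'_a$ and $z\in\supp M'_b$, the case $a=b$ is immediate from the convexity of $\supp M'_a$. When $a\ne b$ one must exploit the twisted morphisms $\alpha g$ of $\PG$, which combine a $\calP$-morphism with a group element and whose action on $M$ is the composite of the $k\calP$-structure of $M'$ with the $G$-equivariance map, together with the fact that $M$ itself (not merely $M\downarrow_\calP$) is indecomposable, in order to move the chain into a single summand $M'_c$ and conclude $x\in\supp M'_c$. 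This contradicts $M(x)=0$ and yields $\supp M=\supp^c M$.
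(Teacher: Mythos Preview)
Your proposal shares the paper's initial setup --- the reduction to $\supp^cM=\PG$, the decomposition of $M\downarrow_{\calP}$ into indecomposables $M_i$, and the transitivity of the $G$-action on the summands --- but then diverges, and the divergent part contains a genuine gap. You correctly identify the case $a\ne b$ (where the bottom $y$ and top $z$ of a chain $y\le x\le z$ lie in supports of different summands) as the obstacle, but your suggestion to ``move the chain into a single summand $M'_c$'' is never carried out. In fact it is not clear how to do so: conjugating by an element $g$ that carries $M'_a$ to $M'_b$ moves $\supp M'_a$ to $\supp M'_b$, but there is no reason it should carry the particular chain $y\le x\le z$ to a chain lying inside $\supp M'_b$.

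The paper's proof sidesteps this difficulty by reversing the direction of the argument. Rather than proving that $\supp M$ is convex, it proves that the zero-locus of $M$ is upward-closed in $\PG$. From $M(x)=0$ one obtains $M(x')=0$ for every $x'$ in the $G$-orbit of $x$, and hence $M_i(x')=0$ for every summand $M_i$ and every such $x'$. Any $\PG$-morphism $x\to z$ yields a $\calP$-relation $x'\le z$ for some $x'$ in that orbit; then Lemma~3.22, applied to the indecomposable $k\calP$-module $M_i$, forces $M_i(z)=0$ for each $i$, whence $M(z)=0$. This contradicts $\supp^cM=\PG$, since the maximal objects must lie in the support. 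The point you are missing is that one never needs to trap the whole chain inside a single summand: it suffices that the entire $G$-orbit of $x$ lies outside \emph{every} $\supp M_i$ simultaneously, so that the upward-propagation from Lemma~3.22 runs independently in each summand.
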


\begin{proof} Without loss of generality, suppose $\supp^c M=\PG$. Then the restriction decomposes as a direct sum of indecomposable $k\calP\rtimes 1$-modules
$$
M\downarrow^{\PG}_{\calP\rtimes 1}=\bigoplus_{i=1}^n M_i.
$$
The group $G$ acts on the set $\{M_1,\cdots,M_n\}$ of indecomposable $k\calP\rtimes 1$-modules. The $k\PG$-module $M$ is indecomposable if and only if there is only one $G$-orbit on the set $\{M_1,\cdots,M_n\}$. These $M_i$'s have conjugate supports.

Now assume $M(x)=0$ ($x$ not maximal or minimal, by assumption). It implies that $M(x')=0$ and hence $M_i(x')=0, \forall i$, for all $x'\cong x$ in $\Ob(\PG)$. Thus if there exists a morphism $x\to z$ in $\Mor(\PG)$, there exists $x'\to z$ in $\Mor\calP$ for some $x'\cong x$. Applying Lemma 3.22, we find that $M_i(z)=0$ for all $i$. It means that $M(z)=0$ for every $z$ with a morphism $x\to z$, a contradiction to our assumption that $\supp^c M=\PG$.
\end{proof}

By the definitions of limits, it is straightforward to prove the following isomorphisms.

\begin{lemma} Suppose $\calC$ is finite EI and $\calD\subset\calC$ is a full subcategory. Let $M\in k\calC$-mod. Then
\begin{enumerate}
\item if $\calD$ is a coideal and contains $\supp M$, $\lim_{\calC}M\cong\lim_{\calD}M\downarrow_{\calD}$; and 

\item if $\calD$ is an ideal and contains $\supp M$, $\colim_{\calC}M\cong\colim_{\calD}M\downarrow_{\calD}$.
\end{enumerate}
\end{lemma}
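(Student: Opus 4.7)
The plan is to write down the standard presentations of the two constructions and to observe that, under the hypotheses $\supp M\subset\calD$ together with the coideal (resp.\ ideal) condition, every relation (resp.\ every constraint) either vanishes trivially or is already visible inside $\calD$.

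For (1), I would first rewrite
$$
\lim_{\calC}M\;=\;\left(\bigoplus_{x\in\Ob\calC}M(x)\right)\Big/\bigl\langle\;m-M(\alpha)(m)\;\bigm|\;\alpha\in\Hom_{\calC}(x,y),\ m\in M(x)\bigr\rangle.
$$
Because $M(x)=0$ for $x\notin\Ob\calD$, the direct sum collapses to $\bigoplus_{x\in\Ob\calD}M(x)$, which is precisely the ambient space for $\lim_{\calD}M\downarrow_{\calD}$. It then remains to identify the relations. The only generators that are not tautologically zero arise from $\alpha:x\to y$ with $x\in\Ob\calD$; the coideal hypothesis forces $y\in\Ob\calD$ as well, and since $\calD\subset\calC$ is full, $\alpha$ is genuinely a morphism of $\calD$. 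Hence the two quotients coincide.

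For (2), the argument is the vertical dual. I would realize $\colim_{\calC}M$ as the subspace of $\prod_{x\in\Ob\calC}M(x)$ consisting of families $(m_x)$ with $M(\alpha)(m_x)=m_y$ for every $\alpha:x\to y$ in $\calC$. The coordinates at $x\notin\Ob\calD$ are forced to be zero, so the ambient product reduces to $\prod_{x\in\Ob\calD}M(x)$. The compatibility conditions split according to whether source and target lie in $\calD$: the ideal hypothesis rules out $y\in\Ob\calD$ with $x\notin\Ob\calD$; the case $x\in\Ob\calD$, $y\notin\Ob\calD$ collapses to $M(\alpha)(m_x)=0$, which is automatic because the codomain $M(y)$ already vanishes; and the remaining cases are either vacuous or internal to $\calD$. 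Thus the compatible families over $\calC$ are exactly the compatible families over $\calD$.

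The whole exercise is essentially bookkeeping, so the main obstacle is discipline: keeping track of the four source/target cases, and remembering that the asymmetry between (1) and (2) is precisely the asymmetry between a coideal (closed under morphisms going out) and an ideal (closed under morphisms coming in). Finiteness of $\Ob\calC$ and the EI hypothesis are not actually required for the lemma; they are merely the standing assumptions of the surrounding section.
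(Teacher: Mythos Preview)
Your proof is correct and is exactly the kind of verification the paper has in mind: the paper's own proof consists of the single line ``By the definitions of limits, it is straightforward to prove the following isomorphisms,'' and your argument simply makes that straightforward check explicit. Your reading of the paper's unusual conventions ($\lim=\varinjlim$, $\colim=\varprojlim$) is also correct, and your closing remark that neither finiteness nor the EI condition is actually used is accurate.
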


\section{Vertices and Sources}

In Section 2, we briefly explained, given a skew group algebra $\SG$, how Dade conceived a theory of vertices and sources, through comparing it with various subalgebras $\SH$, for $H\subset G$. When $\calP=G/H$ with left $G$-multiplication, $k\PG$ is Morita equivalent to $kH$. If we take the trivial module $\k\in k\PG$-mod (corresponding to $k\in kH$-mod), a ``vertex'' of $\k$, in the sense of Dade, would be $T$, a Sylow $p$-subgroup of $H$. However, it actually means that $\k$ is projective relative to $k\calP\rtimes T$, which is \textit{not} Morita equivalent to $kT$, and thus does not match the classical theory for group algebras. See Example 4.23 for more details. We shall fix the problem and get a sharpened definition.

\subsection{Inclusions and restrictions} Let $\calP$ be a $G$-poset. Suppose $H$ is a subgroup of $G$. We may regard $\calP$ as an $H$-poset. Assume $\calQ$ is a $H$-subposet of $\calP$. Then we have two faithful functors and their composite, which are inclusions of transporter categories,
$$
\iota^{\PG}_{\QH}=\iota^{\PG}_{\calP\rtimes H}\iota^{\calP\rtimes H}_{\QH} : \calQ\rtimes H {\buildrel{\iota^{\calP\rtimes H}_{\QH}}\over{\longrightarrow}} \calP\rtimes H {\buildrel{\iota^{\PG}_{\calP\rtimes H}}\over{\longrightarrow}} \PG.
$$
We shall study their effects on representations of these categories.

If $\calP'$ happens to be a $G$-subposet of $\calP$, there are two similar faithful functors and their composite as follows
$$
\iota^{\PG}_{\calP'\rtimes G}\iota^{\calP'\rtimes G}_{\calP'\rtimes H} : \calP'\rtimes H {\buildrel{\iota^{\calP'\rtimes G}_{\calP'\rtimes H}}\over{\longrightarrow}} \calP'\rtimes G {\buildrel{\iota^{\PG}_{\calP'\rtimes G}}\over{\longrightarrow}} \PG.
$$
Obviously in this case, $\iota^{\PG}_{\calP'\rtimes G}\iota^{\calP'\rtimes G}_{\calP'\rtimes H}=\iota^{\PG}_{\calP\rtimes H}\iota^{\calP\rtimes H}_{\calP'\rtimes H}$.

In general, let $\calD$ and $\calC$ be two small categories and $\tau : \calD\to\calC$ be a functor. Then $\tau$ induces a restriction along $\tau$, written as $\Res_{\tau} : k\calC$-mod $\to k\calD$-mod. When $\calC=G$ is a group and $\calD=H$ is a subgroup, the restriction along the inclusion is the usual restriction $\downarrow^G_H$. In the present paper, we will chiefly be interested in the situation where $\tau$ is an inclusion. When this is the case, we shall denote $\Res_{\tau}M$ by $M\downarrow^{\calC}_{\calD}$, for all $M\in k\calC$-mod. Let $\PG$ be a transporter category, and $\QH$ be a transporter subcategory. We have 
$$
M\downarrow^{\PG}_{\QH}=M\downarrow^{\PG}_{\calP\rtimes H}\downarrow^{\calP\rtimes H}_{\QH},
$$
for all $M\in k\PG$-mod. We comment that the restriction $\downarrow^{\calP\rtimes H}_{\QH}=1_{k\QH}\cdot -$ is a brutal truncation, not coming from a unital algebra homomorphism. 

\subsection{Kan extensions} In group representations, the restriction has isomorphic left and right adjoints, called the \textit{induction} and the \textit{co-induction}. They are actually special cases of the Kan extensions. Let $\calD$ and $\calC$ be two small categories and $\tau : \calD\to\calC$ be a functor. Then the restriction $\Res_{\tau} : k\calC$-mod $\to k\calD$-mod possesses both left and right adjoints (Kan extensions) $LK_{\tau},RK_{\tau} : k\calD$-mod $\to k\calC$-mod. These are well-known constructions in homological algebra. However they are seldom used in representation theory since they are usually extremely hard to compute. We shall see, in the representation theory of transporter categories, that it is possible to understand the Kan extensions and then to apply these constructions. 

By definition, given $N\in k\calD$-mod, we can construct two $k\calC$-modules, $LK_{\tau}N$ and $RK_{\tau}N$. Suppose $x\in\Ob\calC$. Then
$$
[LK_{\tau}N](x)=\lim_{\tau/x}\tilde{N}\ \ \mbox{and}\ \ [RK_{\tau}N](x)=\colim_{x\backslash\tau}\bar{N}.
$$
Here $\tau/x$ and $x\backslash\tau$ are categories over and under $x$, respectively. We often just refer to them as an \textit{overcategory} or an \textit{undercategory}. The objects of $\tau/x$ are pairs $(y,\alpha)$, where $y\in\Ob\calD$ and $\alpha\in\Hom_{\calC}(\tau(y),x)$; while a morphism $f : (y,\alpha)\to(z,\beta)$ is a morphism $f\in\Hom_{\calD}(y,z)$, such that $\alpha=\beta\tau(f)$. By comparison, the objects of $x\backslash\tau$ are pairs $(\gamma,w)$, with $w\in\Ob\calD$ and $\gamma\in\Hom_{\calC}(x,\tau(w))$. The morphisms are defined accordingly. Meanwhile $\tilde{N}$ is the restriction along the canonical functor (a projection) $\tau/x \to \calD$ and $\bar{N}$ is the restriction along $x\backslash\tau\to\calD$. For convenience, we shall abbreviate the defining formulas of Kan extensions to
$$
[LK_{\tau}N](x)=\lim_{\tau/x}N\ \ \mbox{and}\ \ [RK_{\tau}N](x)=\colim_{x\backslash\tau}N,
$$
in the rest of the present paper, despite the fact that $N$ is not defined on the over- and undercategories.

When $\calC=G$ is a group and $\calD=H$ is a subgroup, the three functors associated to the inclusion are the usual restriction $\Res_{\iota}=\downarrow^G_H$, induction $LK_{\iota}=\uparrow^G_H$ and coinduction $RK_{\iota}=\Uparrow^G_H$ in group representations. We emphasize that in general $LK_{\tau}\not\cong RK_{\tau}$. The computations of these functors usually amounts to analyzing the structures of all relevant over- and undercategories.

To be consistent, we shall write $M\downarrow^{\calC}_{\calD}=\Res_{\tau}M$ for all $M\in k\calC$-mod, and $N\uparrow^{\calC}_{\calD}=LK_{\tau}N$ and $N\Uparrow^{\calC}_{\calD}=RK_{\tau}N$, for all $N\in k\calD$-mod.

Let $\PG$ be a transporter category, and $\QH$ be a transporter subcategory. By earlier discussions, we have 
$$
\downarrow^{\PG}_{\QH}=\downarrow^{\PG}_{\calP\rtimes H}\downarrow^{\calP\rtimes H}_{\QH}.
$$
Consequently the left and right adjoints of $\downarrow^{\PG}_{\QH}$ satisfy 
$$
\uparrow^{\PG}_{\QH}=\uparrow^{\calP\rtimes H}_{\QH}\uparrow^{\PG}_{\calP\rtimes H}
$$ 
and 
$$
\Uparrow^{\PG}_{\QH}=\Uparrow^{\calP\rtimes H}_{\QH}\Uparrow^{\PG}_{\calP\rtimes H}.
$$

At this point, we shall compute several Kan extensions in the context of transporter categories. This will be a cornerstone for our upcoming developments.

\begin{theorem} Let $\PG$ be a transporter category, and $\QH$ be a transporter subcategory. Suppose $\iota=\iota^{\PG}_{\QH}$ is the inclusion functor. Choose a set of left coset representatives $[G/H]=\{g_1,\cdots,g_n\}$.
\begin{enumerate}
\item For each $x\in\Ob(\PG)$, $\iota/x$ (resp. $x\backslash\iota$), if not empty, is the disjoint union of full subcategories $\iota/x=\coprod_{g_i\in[G/H]}(\iota/x)_i$ (resp. $x\backslash\iota=\coprod_{g_i\in[G/H]}(x\backslash\iota)_i$). Moreover each $(\iota/x)_i$ (resp. $(x\backslash\iota)_i$) has its skeleton isomorphic to $({}^{g_i}\calQ)_{\le x}$ (resp. $({}^{g_i}\calQ)_{\ge x}$). Consequently, given a $k\QH$-module $N$,
$$
[N\uparrow^{\PG}_{\QH}](x)=\bigoplus_{g_i\in[G/H]}\lim_{(\iota/x)_i}N\cong\bigoplus_{g_i\in[G/H]}\lim_{({}^{g_i}\calQ)_{\le x}}N.
$$
(resp. 
$$
[N\Uparrow^{\PG}_{\QH}](x)=\bigoplus_{g_i\in[G/H]}\colim_{(x\backslash\iota)_i}N\cong\bigoplus_{g_i\in[G/H]}\colim_{({}^{g_i}\calQ)_{\ge x}}N.)
$$
If $\beta s\in\Hom_{\PG}(x,z)$, then it induces a functor $\iota/x\to\iota/z$ (resp. $z\backslash\iota \to x\backslash\iota$), given by $(y,\alpha g)\mapsto (y,\beta s\alpha g)=(y,\beta{}^s\alpha sg)$ (resp. $(\alpha' g', y')\mapsto (\alpha' g'\beta s,y')=(\alpha'{}^{g'}\beta g's,y')$), which defines a map
$$
[N\uparrow^{\PG}_{\QH}](\beta s) : [N\uparrow^{\PG}_{\QH}](x) \to [N\uparrow^{\PG}_{\QH}](z)
$$
(resp.
$$
[N\Uparrow^{\PG}_{\QH}](\beta s) : [N\Uparrow^{\PG}_{\QH}](x) \to [N\Uparrow^{\PG}_{\QH}](z))
$$
that gives rise to the $\beta s$-action on $N\uparrow^{\PG}_{\QH}$ (resp. $N\Uparrow^{\PG}_{\QH}$).

\item The left and right adjoints of $\downarrow^{\PG}_{\calP\rtimes H}$ are $\uparrow^{\PG}_{\calP\rtimes H}\cong k\PG\otimes_{k\calP\rtimes H}-$ and $\Uparrow^{\PG}_{\calP\rtimes H}\cong \Hom_{k\calP\rtimes H}(k\PG,-)$, respectively.

\item The left and right adjoints of $\downarrow^{\calP\rtimes H}_{\QH}$ are $\uparrow^{\calP\rtimes H}_{\QH}\cong\lim_{\calQ_{\le}}$ and $\Uparrow^{\calP\rtimes H}_{\QH}\cong\colim_{\calQ_{\ge}}$, respectively.
\end{enumerate}
\end{theorem}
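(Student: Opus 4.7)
My plan is to prove part (1) in detail; parts (2) and (3) will follow readily. By definition of the Kan extensions, $[N\uparrow^\PG_\QH](x)=\lim_{\iota/x}\tilde N$ and $[N\Uparrow^\PG_\QH](x)=\colim_{x\backslash\iota}\bar N$, so the first task is to unpack the structure of the overcategory $\iota/x$ (and dually the undercategory $x\backslash\iota$) explicitly.

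Unwinding definitions, an object of $\iota/x$ is a pair $(y,\alpha g)$ with $y\in\Ob\calQ$, $g\in G$, and $\alpha\in\Hom_\calP({}^gy,x)$; a morphism $(y,\alpha g)\to(y',\alpha'g')$ is $\gamma h\in\Mor\QH$ (so $h\in H$) satisfying $(\alpha'g')(\gamma h)=\alpha g$. Applying the composition rule $(\alpha'g')(\gamma h)=\alpha'\cdot{}^{g'}\gamma\cdot g'h$ forces both $g=g'h$ (hence $gH=g'H$) and $\alpha=\alpha'\cdot{}^{g'}\gamma$. Therefore the coset $gH\in G/H$ is constant on each connected component, yielding the decomposition $\iota/x=\coprod_{g_i\in[G/H]}(\iota/x)_i$, where $(\iota/x)_i$ collects the objects with $g\in g_iH$. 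A parallel analysis for $x\backslash\iota$ (using pre-composition, with right-coset bookkeeping rewritten via $g_i\leftrightarrow g_i^{-1}$ to match the indexing set $[G/H]$) produces the dual decomposition.

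Next I would construct a functor $\pi_i:(\iota/x)_i\to({}^{g_i}\calQ)_{\le x}$ sending $(y,\alpha g_ih)$ to ${}^{g_ih}y$; applying conjugation by $g'=g_ih'$ to $\gamma:{}^{h''}y\to y'$ with $h''=(h')^{-1}h$ produces the required poset relation ${}^{g_ih}y\le{}^{g_ih'}y'$ in ${}^{g_i}\calQ$—here it is essential that $\calQ$ is an $H$-subposet, so that ${}^{h''}y\in\Ob\calQ$. The functor $\pi_i$ is essentially surjective (any $t={}^{g_i}z\in\Ob({}^{g_i}\calQ)_{\le x}$ is hit by $(z,\alpha g_i)$ for the unique $\alpha:{}^{g_i}z\to x$), faithful (because $\calQ$ is a poset, so $\gamma$ is determined by its source and target), and full (because the compatibility $\alpha=\alpha'\cdot{}^{g'}\gamma$ is automatic from $\calP$ being a poset). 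It remains to check that any two objects $(y,\alpha g_ih),(y',\alpha'g_ih')$ with the same image ${}^{g_ih}y={}^{g_ih'}y'$ are isomorphic in $(\iota/x)_i$; this is handled by $\gamma=1_{y'}$ with $h''=(h')^{-1}h$. Hence $\pi_i$ is an equivalence of categories. Since a finite disjoint union of indexing categories converts $\lim$ into a direct sum and $\colim$ into a direct product—and these agree for finite index in $k$-mod—the formulas of part (1) drop out. The action of $\beta s:x\to z$ is then read off from the induced post-composition functor $\iota/x\to\iota/z$ (dually, pre-composition $z\backslash\iota\to x\backslash\iota$).

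Finally, part (3) is part (1) specialized to $G=H$: then $[G/H]$ is a singleton, $({}^1\calQ)_{\le x}=\calQ_{\le x}$, and the formulas collapse to $\uparrow^{\calP\rtimes H}_\QH\cong\lim_{\calQ_{\le}}$ and $\Uparrow^{\calP\rtimes H}_\QH\cong\colim_{\calQ_{\ge}}$. For part (2), observe that $\iota^\PG_{\calP\rtimes H}$ is the identity on objects, so $k(\calP\rtimes H)\hookrightarrow k\PG$ is a \emph{unital} subalgebra inclusion, and the Kan-extension restriction $\downarrow^\PG_{\calP\rtimes H}$ coincides with the usual module-theoretic restriction along this inclusion. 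The standard tensor-hom adjunction for a unital subalgebra inclusion then identifies the left and right adjoints with $k\PG\otimes_{k\calP\rtimes H}-$ and $\Hom_{k\calP\rtimes H}(k\PG,-)$, and by uniqueness of adjoints these agree with $\uparrow^\PG_{\calP\rtimes H}$ and $\Uparrow^\PG_{\calP\rtimes H}$. I expect the main technical hurdle to be the morphism bookkeeping in the equivalence $\pi_i$—specifically, tracing how conjugation by $g'=g_ih'$ translates the composability condition $\alpha=\alpha'\cdot{}^{g'}\gamma$ inside $(\iota/x)_i$ into the unique poset relation ${}^{g_ih}y\le{}^{g_ih'}y'$ in ${}^{g_i}\calQ$, and checking that the dual coset-reindexing for $x\backslash\iota$ correctly identifies its components with the $({}^{g_i}\calQ)_{\ge x}$.
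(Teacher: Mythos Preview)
Your proposal is correct and follows essentially the same approach as the paper: both analyze the overcategory $\iota/x$ by unwinding the morphism condition to see that the coset $gH$ is constant on components, then define the same functor $(y,\alpha g_ih)\mapsto{}^{g_ih}y$ to $({}^{g_i}\calQ)_{\le x}$ and verify it is an equivalence, with (2) and (3) obtained as specializations. The only minor difference is that the paper exhibits an explicit quasi-inverse $z\mapsto({}^{g_i^{-1}}z,\beta g_i)$ and, for (2), computes the Kan extension via the terminal objects of $({}^{g_i}\calP)_{\le x}$ before identifying it with $k\PG\otimes_{k\calP\rtimes H}-$, whereas you check essential surjectivity, fullness, and faithfulness directly and invoke uniqueness of adjoints for (2); these are cosmetic variations of the same argument.
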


\begin{proof} We shall see that (2) and (3) are special cases of (1), although (2) can be established by classical constructions. We will only compute the left Kan extensions, and leave the right Kan extensions to the interested reader to check.

In order to prove (1), we first abbreviate the inclusion functor to $\iota$. Suppose $\iota/x$ is not empty. Let $(y,\alpha s)$ be an object of $\iota/x$. Choose $g_1,\cdots,g_n\in [G/H]$ to be a set of left coset representatives. Then $s=g_ih$ for some $g_i$ and $h\in H$. We see that $({}^hy,\alpha g_i)$ also belongs to $\iota/x$ and it is isomorphic to $(y,\alpha s)$ in $\iota/x$. Up to isomorphism, every object of $\iota/x$ is of the form $(y',\alpha g_i)$ for some $y'\in\Ob(\QH)$, $\alpha\in\Mor\calP$ and $g_i\in[G/H]$. Moreover if $\gamma h:(y_1,\alpha_1 g_i)\to (y_2,\alpha_2 g_j)$ is a morphism, we obtain equalities $\alpha_1=\alpha_2{}^{g_j}\gamma$ and $g_i=g_jh$. The latter implies $g_iH=g_jH$. Particularly, it tells us that two objects $(y, \alpha s)$, $(z,\beta t)$ of $\iota/x$ are connected by a zigzag of morphisms, or lie in the same connected component, if and only if $sH=tH$. Moreover, since both $h$ and $\gamma$ are uniquely determined, it also tells us that, between any two objects of $\iota/x$, there is at most one morphism. Thus the skeleton of $\iota/x$ must be a poset, with up to $|G:H|$ connected components.

Denote by $(\iota/x)_i$ the subcategory of $\iota/x$ consisting of objects of the form $(y,\alpha g_ih)$, where $h\in H$. Our calculation means that $\iota/x$ is the disjoint union of $(\iota/x)_i$, each indexed by a left coset representative $g_i\in[G/H]$.

Now we define a functor, between posets, $(\iota/x)_i \to ({}^{g_i}\calQ)_{\le x}$ by $(y,\alpha g_ih)\mapsto {}^{g_ih}y$ on objects. This functor has a quasi-inverse, given on objects by $z\mapsto ({}^{g_i^{-1}}z,\beta g_i)$, if $z\in {}^{g_i}\calQ$ and $\Hom_{\calP}(z,x)=\{\beta\}$. Hence $(\iota/x)_i\simeq({}^{g_i}\calQ)_{\le x}$.

As to (2), since this is a special case of $\calQ=\calP$ in (1), we find that $\iota^{\PG}_{\calP\rtimes H}\simeq\coprod_{g_i\in[G/H]}({}^{g_i}\calP)_{\le x}$. In this case each $({}^{g_i}\calP)_{\le x}$ has terminal objects $\{({}^{(g_ih)^{-1}}x,1_xg_ih)\bigm{|} h\in H\}$. Given that $g_j\in [G/H]$ is the unique coset representative satisfying $gg_i\in g_jH$, which sends the terminal object $({}^{g_i^{-1}}x,1_xg_i)$ to the terminal object $({}^{(gg_i)^{-1}}({}^gx),1_{{}^gx}gg_i)\cong({}^{g_j^{-1}}({}^gx),1_{{}^gx}g_j)$. Thus
$$
[N\uparrow^{\PG}_{\calP\rtimes H}](x)=\lim_{\iota^{\PG}_{\calP\rtimes H}/x}N\cong\bigoplus_{g_i\in [G/H]}N({}^{g_i^{-1}}x),
$$
and the $G$-action is determined by $N({}^{g_i^{-1}}x)\to N({}^{g_j^{-1}}({}^gx))\cong N({}^{g_j^{-1}}x)$. Meanwhile, the restriction $\downarrow^{\PG}_{\calP\rtimes H}$ is induced by the injective unital algebra homomorphism $k\calP\rtimes H \to k\PG$. So are its adjoints. The functor that we just built is isomorphic to $k\PG\otimes_{k\calP\rtimes H}-$, identified with $\uparrow^G_H$, in Section 2, used by Boisen and Dade.

We turn to (3). By (1), $\iota^{\calP\rtimes H}_{\QH}/x\simeq\calQ_{\le x}$, for each $x\in\Ob(\PG)$. It follows that 
$$
N\uparrow^{\calP\rtimes H}_{\QH}(x)=[LK_{\iota^{\calP\rtimes H}_{\QH}}N](x)=\lim_{\iota^{\calP\rtimes H}_{\QH}/x}N\cong\lim_{\calQ_{\le x}}N.
$$
\end{proof}

For the interested reader, when analysing $x\backslash\iota\ne\emptyset$, we should notice that, for each $(\alpha s,y)\in\Ob(x\backslash\iota)$, there is an isomorphism $(\alpha s,y)\cong ({}^{s^{-1}}\alpha,{}^{s^{-1}}y)$. The left coset representatives provide a set of right representatives $[H\backslash G]=\{g^{-1}_i\}$. Since $s=hg^{-1}_i$ for a unique $i$, we obtain an isomorphism $(\alpha s,y)\cong ({}^{h^{-1}}\alpha g^{-1}_i,{}^{{h^{-1}}}y)$ in $x\backslash\iota$. Hence two objects $(\alpha s, y)$ and $(\beta t, z)$, of $x\backslash\iota$, belong to the same connected component, if and only if $Hs=Ht$. We can also verify that $x\backslash\iota$ has a poset as its skeleton. Denote by $(x\backslash\iota)_i$ the connected component indexed by $g^{-1}_i$ such that $x\backslash\iota=\prod_{g^{-1}_i\in[H\backslash G]}(x\backslash\iota)_i$. It follows that the functor $(x\backslash\iota)_i \to ({}^{g_i}\calQ)_{\ge x}$, defined on objects by $(\alpha hg^{-1}_i,y)\to {}^{g_ih^{-1}}y$ is an equivalence of categories.

Occasionally we will deal with the case where $\calP'$ is a $G$-subposet of $\calP$. Under the circumstance, we will have $\downarrow^{\PG}_{\calP'\rtimes G}\downarrow^{\calP'\rtimes G}_{\calP'\rtimes H}=\downarrow^{\PG}_{\calP\rtimes H}\downarrow^{\calP\rtimes H}_{\calP'\rtimes H}$, $\uparrow^{\calP'\rtimes G}_{\calP'\rtimes H}\uparrow^{\PG}_{\calP'\rtimes G}=\uparrow^{\calP\rtimes H}_{\calP'\rtimes H}\uparrow^{\PG}_{\calP\rtimes H}$, and $\Uparrow^{\calP'\rtimes G}_{\calP'\rtimes H}\Uparrow^{\PG}_{\calP'\rtimes G}=\Uparrow^{\calP\rtimes H}_{\calP'\rtimes H}\Uparrow^{\PG}_{\calP\rtimes H}$.

From our proof of the above theorem, if the (convex) support of an indecomposable $k\calP\rtimes H$-module $L$ is $\QH$, then the support of $L\uparrow^{\PG}_{\calP\rtimes H}$ can be larger, and contains ${}^g(\QH)$, $\forall g\in G$.

\begin{definition} Suppose $\QH$ is a transporter subcategory of $\PG$. Let $N\in k\QH$-mod. Fix an element $g\in G$. We define a $k {}^g(\QH)$-module ${}^gN$ as follows. It equals $N$ as a vector space, with ${}^gN({}^gx)=N(x)$ for each $x\in\Ob(\QH)$. While for $n\in {}^gN({}^gx), {}^gx\in\Ob{}^g(\QH)$, and ${}^g(\alpha h) : {}^gx\to {}^gy$, we define $({}^g(\alpha h))(n)=(\alpha h)(n)$. We shall call ${}^gN$ a \textit{conjugate} of $N$.

If $g\in N_G(\QH)$, then ${}^gN$ is a $k\QH$-module.
\end{definition}

\begin{corollary} Let $\PG$ be a transporter category and $\QH$ be a transporter subcategory.

\begin{enumerate}
\item Let $N\in k\QH$-mod. Then there is a split surjection
$$
N\uparrow^{\PG}_{\QH}\downarrow^{\PG}_{\calP\rtimes H}\cong\bigoplus_{g_i\in[G/H]}{}^{g_i}(N\uparrow^{\calP\rtimes H}_{\QH}).
$$ 

\item Let $N\in k\QH$-mod. Then there is a split surjection
$$
p: N\uparrow^{\PG}_{\QH}\downarrow^{\PG}_{\QH}\to N.
$$ 
In particular if $N'\in k\calP\rtimes H$-mod, as $k\calP\rtimes H$-modules every summand of $N'\uparrow^{\PG}_{\calP\rtimes H}\downarrow^{\PG}_{\calP\rtimes H}$ is isomorphic to $N'$.

\item If $N$ is an indecomposable $k\QH$-module with support $\QH$, then $N\uparrow_{\QH}^{\calP\rtimes H}$ is an indecomposable $k\calP\rtimes H$-module with support $\underbrace{\calQ}\rtimes H$.

\item If $L$ is an indecomposable $k\calP\rtimes H$-module with $\supp L=\calP\rtimes H$, then every indecomposable summand of $L\uparrow^{\PG}_{\calP\rtimes H}$ has $\PG$ as its convex support.
\end{enumerate}
\end{corollary}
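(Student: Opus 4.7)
The plan is to reduce all four claims to the explicit description of the left Kan extension in Theorem 4.2(1),
$$[N\uparrow^{\PG}_{\QH}](x)=\bigoplus_{g_i\in [G/H]}\lim_{({}^{g_i}\calQ)_{\le x}}N,$$
tracking how morphisms of $\PG$ permute the coset-indexed summands. A key preliminary identification is that, applying Theorem 4.2(3) inside $\calP\rtimes H$ and then conjugating by $g_i$, the value of ${}^{g_i}(N\uparrow^{\calP\rtimes H}_{\QH})$ at $x$ matches $\lim_{({}^{g_i}\calQ)_{\le x}}N$. Hence each coset-indexed summand in Theorem 4.2(1) is the value of a conjugate of the intermediate induction.

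For (1), I would verify that restriction along $\calP\rtimes H\hookrightarrow\PG$ produces the stated decomposition object-by-object using this identification. The action of a morphism $\beta s$ with $s\in H$ permutes the coset-indexed summands according to $s\cdot g_iH=g_jH$; in particular the $g_1=1$ summand is stable under $\calP\rtimes H$ and identifies with $N\uparrow^{\calP\rtimes H}_{\QH}$, yielding the split surjection. For (2), I would further restrict (1) to $\QH$: since $\calQ_{\le x}$ has $x$ as a maximum for $x\in\Ob\calQ$, the $g_1=1$ summand of $[N\uparrow^{\PG}_{\QH}](x)$ collapses to $N(x)$, so $N$ sits inside $N\uparrow^{\PG}_{\QH}\downarrow^{\PG}_{\QH}$ as a direct summand and splits the counit $p$. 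The second sentence of (2) is the case $\calQ=\calP$ of (1): each summand ${}^{g_i}N'$ is identified with $N'$ as a $k\calP\rtimes H$-module via the $G$-equivariance of $\calP$.

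For (3), the inclusion $\supp M\subseteq\underbrace{\calQ}\rtimes H$ (with $M:=N\uparrow^{\calP\rtimes H}_{\QH}$) is immediate, since $\calQ_{\le y}=\emptyset$ forces the colimit to vanish for $y\notin\underbrace{\calQ}$. Indecomposability of $M$ follows from Frobenius reciprocity: because $\calQ_{\le x}$ has $x$ as a maximum for $x\in\Ob\calQ$, one has $M\downarrow^{\calP\rtimes H}_{\QH}\cong N$, so
$$\End_{k\calP\rtimes H}(M)\;\cong\;\Hom_{k\QH}(N,M\downarrow^{\calP\rtimes H}_{\QH})\;=\;\End_{k\QH}(N),$$
which is local. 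Once $M$ is known to be indecomposable, Proposition 3.23 forces $\supp M=\supp^c M$; combining $\calQ\subseteq\supp M$ with the convex structure of the coideal $\underbrace{\calQ}$ then identifies $\supp M$ with $\underbrace{\calQ}\rtimes H$.

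For (4), let $M$ be an indecomposable summand of $L\uparrow^{\PG}_{\calP\rtimes H}$. The restriction $M\downarrow^{\PG}_{\calP\rtimes H}$ is a direct summand of $L\uparrow^{\PG}_{\calP\rtimes H}\downarrow^{\PG}_{\calP\rtimes H}$, which by the second sentence of (2) is a direct sum of copies of $L$. Since $\supp L=\calP\rtimes H$ we deduce $M(x)\ne 0$ for every $x\in\Ob\calP=\Ob\PG$, and Proposition 3.23 then gives $\supp^c M=\supp M=\PG$. The main obstacle across all four parts lies in (3), specifically in pinning down the support of $M$ at intermediate objects $y\in\underbrace{\calQ}\setminus\Ob\calQ$: a direct computation of $\lim_{\calQ_{\le y}}N$ is not transparent, since the colimit can in principle degenerate despite the nonvanishing of the values of $N$. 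The argument therefore has to route through indecomposability of $M$ together with Proposition 3.23, converting a delicate non-vanishing statement into a statement about convex supports.
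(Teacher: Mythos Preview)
Your overall approach aligns with the paper's: both factor through $\calP\rtimes H$ for (1), identify the $g_i=1$ summand for (2), and argue (4) by restricting an indecomposable summand $M$ back to $\calP\rtimes H$. Your indecomposability argument in (3) via Frobenius reciprocity is equivalent to the paper's one-line appeal to fullness of $\QH$ in $\calP\rtimes H$; both amount to the isomorphism $\End_{k\calP\rtimes H}(M)\cong\End_{k\QH}(N)$.

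There are two genuine gaps. First, your justification of the second sentence of (2) is wrong: the assertion that ${}^{g_i}N'\cong N'$ as a $k\calP\rtimes H$-module ``via the $G$-equivariance of $\calP$'' fails because ${}^{g_i}N'$ is a module over $k\calP\rtimes{}^{g_i}H$, not over $k\calP\rtimes H$, unless $g_i\in N_G(H)$. With $\calP$ a single point this would assert $N'\uparrow^G_H\downarrow^G_H\cong(N')^{|G:H|}$ for every $kH$-module $N'$, which Mackey's formula shows is false in general. Since your proof of (4) rests on this claim, it inherits the gap. (The paper asserts the same isomorphism $L\uparrow\downarrow\cong L^{|G:H|}$ without argument, so you are faithfully reproducing the paper's reasoning; but the justification you add does not hold.)

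Second, your route through Proposition 3.23 does not pin down the support in (3). That proposition yields only that $\supp M$ is convex; but ``convex, contains $\calQ$, and contained in $\underbrace{\calQ}$'' does not force equality with $\underbrace{\calQ}$ (take $\calP: a<b<c$ and $\calQ=\{a\}$; then $\{a,b\}$ satisfies all three conditions). What is actually needed is that $\varinjlim_{\calQ_{\le y}}N\ne 0$ for every maximal $y$ of $\underbrace{\calQ}$, and convexity alone does not supply this. The paper simply asserts the support statement, so neither argument is complete at this point; but the obstacle you identify as ``the main one'' is not in fact resolved by the mechanism you propose.
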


\begin{proof} To prove the first statement, we write
$$
N\uparrow^{\PG}_{\QH}=(N\uparrow^{\calP\rtimes H}_{\QH})\uparrow^{\PG}_{\calP\rtimes H},
$$ 
and then use Theorem 4.2 (2).

The second statement follows from the first, on restriction further down to $\QH$. The direct summand corresponding to $g_i=1$ is a copy of $N$. 

For 3), since $\QH$ is a full subcategory of $\calP\rtimes H$, $N\uparrow^{\calP\rtimes H}_{\QH}$ is indecomposable. By $N\uparrow^{\calP\rtimes H}_{\QH}=N\uparrow^{\calP}_{\calQ}$ and $\supp N=\supp^cN=\QH$, we know $N\uparrow^{\calP\rtimes H}_{\QH}$ has $\underbrace{\calQ}\rtimes H$ as its support.

If $M$ is an indecomposable summand of $L\uparrow^{\PG}_{\calP\rtimes H}$, then $M\downarrow_{\calP\rtimes H}$ is a direct summand of $L\uparrow^{\PG}_{\calP\rtimes H}\downarrow^{\PG}_{\calP\rtimes H}\cong L^{|G:H|}$. Thus every summand of $M\downarrow_{\calP\rtimes H}$ is isomorphic to $L$. Then 4) follows from it.
\end{proof}

Note that the last statement may not be true if $\supp L\ne \calP\rtimes H$. 

The first statement can be regarded as a generalization of a standard result in group representations. However, usually the direct summands of $N\uparrow^{\PG}_{\QH}\downarrow^{\PG}_{\QH}$ need not be isomorphic to each other.

\begin{example} Consider the transporter category $\PG$ in Example 3.2
$$
\xymatrix{&z \ar@(ur,ul)_{1_z1,1_zg} &\\
x\ar[ur]^{\alpha 1}\ar@/^/[rr]^{1_yg} \ar@(ul,dl)_{1_x1} & & y\ar[ul]_{\beta 1}\ar@/^/[ll]^{1_xg}\ar@(ur,dr)^{1_y1}}
$$

Suppose $K=1$ is the trivial subgroup of $G$ and $\calR=\{x\}$ is the subposet consisting of a single object $x$. Then $\RK=\{x\}\times 1$. Let $\k_x$ be the trivial $k\{x\}\times 1$-module. It can also regarded as an (atomic) $k\calP\times 1$-module.

\begin{enumerate}
\item The induced module $\k_x\uparrow^{\PG}_{\{x\}\times 1}$ is given by $\k_x\uparrow^{\PG}_{\{x\}\times 1}(x)\cong \k_x\uparrow^{\PG}_{\{x\}\times 1}(y)\cong k$ as vector spaces and $\k_x\uparrow^{\PG}_{\{x\}\times 1}(z)=k^2$ because $\iota/z$ is the disjoint union of two trivial posets $\{(y,\beta 1)\}$ and $\{(y,\alpha g)\}$. One may check that $\k_x\uparrow^{\PG}_{\{x\}\times 1}$ is indecomposable. Moreover $\k_x\uparrow^{\PG}_{\{x\}\times 1}=(\k_x\uparrow^{\calP\times 1}_{\{x\}\times 1})\uparrow^{\PG}_{\calP\times 1}$, with $\k_x\uparrow^{\calP\times 1}_{\{x\}\times 1}(y)=0$ and $\k_x\uparrow^{\calP\times 1}_{\{x\}\times 1}(z)=k$.

\item By comparison, $\k_x\uparrow^{\PG}_{\calP\times 1}$ is given by $\k_x\uparrow^{\PG}_{\calP\times 1}(x)\cong \k_x\uparrow^{\PG}_{\calP\times 1}(y)\cong k$ as vector spaces and $\k_x\uparrow^{\PG}_{\calP\times 1}(z)=0$.

\item On restriction to $\calP\times 1$, $\k_x\uparrow^{\PG}_{\{x\}\times 1}\downarrow^{\PG}_{\calP\times 1}\cong \k_{\calQ_x}\oplus \k_{\calQ_y}$, where $\calQ_{x}=x\to z$ and $\calQ_y=y\to z$, satisfying ${}^g\calQ_x=\calQ_y$. Here $\k_{\calQ_x}$ and $\k_{\calQ_y}$ are the trivial $k\calQ_x\times 1$- and $k\calQ_y\times 1$-modules, considered as indecomposable $k\calP\times 1$-module. Note that ${}^g\k_{\calQ_x}\cong\k_{\calQ_y}$.

\item On restriction to $\{x\}\times 1$, $\k_x\uparrow^{\PG}_{\{x\}\times 1}\downarrow^{\PG}_{\{x\}\times 1}\cong \k_x$.
\end{enumerate}
\end{example}

In the end, we record some technical statements that we need in proving the Mackey formula.

\begin{corollary} Let $\PG$ be a transporter category, and $\calQ$ be a $G$-subposet of $\calP$. Also let $H$ be a subgroup of $G$ and $\RK$ be a transporter subcategory of $\PG$. Then for every $M\in k\calP\rtimes H$-mod and $N\in k\calQ\rtimes H$-mod, there exist isomorphisms
\begin{enumerate}
\item $M\uparrow^{\PG}_{\calP\rtimes H}\downarrow^{\PG}_{\calQ\rtimes G}\cong M\downarrow^{\calP\rtimes H}_{\calQ\rtimes H}\uparrow^{\calQ\rtimes G}_{\QH}$;
\item $N\uparrow^{\calP\rtimes H}_{\QH}\downarrow^{\calP\rtimes H}_{\calP\rtimes(K\cap H)}\cong N\downarrow^{\QH}_{\calQ\rtimes(K\cap H)}\uparrow^{\calP\rtimes(K\cap H)}_{\calQ\rtimes(K\cap H)}$; and
\item $N\uparrow^{\calP\rtimes H}_{\QH}\downarrow^{\calP\rtimes H}_{\calR\rtimes(K\cap H)}\cong N\downarrow^{\QH}_{\calQ\rtimes(K\cap H)}\uparrow^{\calP\rtimes(K\cap H)}_{\calQ\rtimes(K\cap H)}\downarrow^{\calP\rtimes(K\cap H)}_{\calR\rtimes(K\cap H)}$.
\end{enumerate}
\end{corollary}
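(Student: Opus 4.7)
All three identities are Mackey-type base-change formulas for Kan extensions along inclusions between transporter (sub)categories. My approach is to compute both sides of (1) and (2) pointwise using the explicit formulas in Theorem 4.2, and to verify that the resulting module structures match; statement (3) will then follow from (2) by applying the restriction $\downarrow^{\calP\rtimes(K\cap H)}_{\calR\rtimes(K\cap H)}$ to both sides of the established isomorphism.

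For (1), observe that both $\uparrow^{\PG}_{\calP\rtimes H}$ and $\uparrow^{\calQ\rtimes G}_{\QH}$ are Kan extensions along inclusions that enlarge the group from $H$ to $G$ without changing the ambient poset. Fix a set of coset representatives $[G/H]=\{g_1,\ldots,g_n\}$. By Theorem 4.2(1), at any $x\in\Ob\calQ=\Ob(\calQ\rtimes G)$,
\begin{equation*}
[M\uparrow^{\PG}_{\calP\rtimes H}](x)\cong\bigoplus_{g_i\in[G/H]}\lim_{({}^{g_i}\calP)_{\le x}}M\cong\bigoplus_{g_i\in[G/H]}M({}^{g_i^{-1}}x),
\end{equation*}
where I used that ${}^{g_i}\calP=\calP$ and that the overcategory reduces to a value at a terminal object, as in the proof of Theorem 4.2. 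Since $\calQ\subset\calP$ is $G$-stable, the same computation applied to the inclusion $\QH\hookrightarrow\calQ\rtimes G$ yields
\begin{equation*}
[M\downarrow^{\calP\rtimes H}_{\QH}\uparrow^{\calQ\rtimes G}_{\QH}](x)\cong\bigoplus_{g_i\in[G/H]}\lim_{({}^{g_i}\calQ)_{\le x}}M\cong\bigoplus_{g_i\in[G/H]}M({}^{g_i^{-1}}x),
\end{equation*}
using ${}^{g_i^{-1}}x\in\calQ$ to identify the value of $M\downarrow^{\calP\rtimes H}_{\QH}$ at this point with $M({}^{g_i^{-1}}x)$. I would then verify that a morphism $\beta s:x\to z$ in $\calQ\rtimes G$ acts on both sides by the same formula $(y,\alpha g_i)\mapsto(y,\beta s\alpha g_i)$ on the overcategories, as prescribed in Theorem 4.2(1); this common description makes the pointwise identification natural.

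For (2), both $\uparrow^{\calP\rtimes H}_{\QH}$ and $\uparrow^{\calP\rtimes(K\cap H)}_{\calQ\rtimes(K\cap H)}$ are Kan extensions along inclusions that enlarge the poset from $\calQ$ to $\calP$ without changing the group. By Theorem 4.2(3) applied in each case,
\begin{equation*}
[N\uparrow^{\calP\rtimes H}_{\QH}](x)\cong\lim_{\calQ_{\le x}}N\cong[N\downarrow^{\QH}_{\calQ\rtimes(K\cap H)}\uparrow^{\calP\rtimes(K\cap H)}_{\calQ\rtimes(K\cap H)}](x)
\end{equation*}
for every $x\in\Ob\calP$, since the limit on the right depends only on the underlying $k\calQ$-diagram encoded by $N$, which is preserved under restricting the group from $H$ down to $K\cap H$. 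Likewise the $K\cap H$-action, and the action of any morphism of $\calP$ occurring in $\calP\rtimes(K\cap H)$, is the same post-composition-induced map on overcategories on both sides, so the identification is natural. Statement (3) is obtained by applying the exact restriction $\downarrow^{\calP\rtimes(K\cap H)}_{\calR\rtimes(K\cap H)}$ to both sides of the isomorphism in (2).

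The main obstacle in both (1) and (2) is not the pointwise identification of underlying vector spaces, which is immediate from Theorem 4.2, but rather confirming that the isomorphism is one of $k$-linear functors, i.e.\ that it intertwines the module actions. This reduces to checking that the action of an arbitrary morphism, as prescribed by the post-composition formula in Theorem 4.2(1), is defined purely in terms of the ambient poset and the combinatorics of $[G/H]$, and hence is insensitive to whether one performs the ``group extension'' inside $\PG$ or inside $\calQ\rtimes G$ (for (1)), or whether one performs the ``poset extension'' inside $\calP\rtimes H$ or inside $\calP\rtimes(K\cap H)$ (for (2)).
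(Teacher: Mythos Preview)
Your proposal is correct and follows essentially the same route as the paper. The paper proves (1) by evaluating both sides at an object $x\in\Ob\calQ$ via Theorem 4.1, using that $x$ is terminal in both $\calP_{\le x}$ and $\calQ_{\le x}$ so that each summand collapses to a single value of $M$; it proves (2) by invoking Theorem 4.1(3) together with the observation that the limit $\lim_{\calQ_{\le x}}N$ commutes with restricting the acting group from $H$ to $K\cap H$; and it deduces (3) from (2) exactly as you do, by applying $\downarrow^{\calP\rtimes(K\cap H)}_{\calR\rtimes(K\cap H)}$. Your write-up is in fact slightly more explicit than the paper's about the naturality check, which the paper dispatches with ``one may readily check that these isomorphisms assemble to a module isomorphism.''
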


\begin{proof} For (1), we have 
$$
\begin{array}{ll}
M\uparrow^{\PG}_{\calP\rtimes H}\downarrow^{\PG}_{\calQ\rtimes G}(x)&=\bigoplus_{g_i\in[G/H]}\lim_{\calP_{\le x}}M\\
&\\
&\cong\bigoplus_{g_i\in[G/H]}M(x)\\
&\\
&=\bigoplus_{g_i\in[G/H]}[M\downarrow^{\calP\rtimes H}_{\calQ\rtimes H}(x)]\\
&\\
&\cong\bigoplus_{g_i\in[G/H]}\lim_{\calQ_{\le x}}M\downarrow^{\calP\rtimes H}_{\calQ\rtimes H}\\
&\\
&=[M\downarrow^{\calP\rtimes H}_{\calQ\rtimes H}\uparrow^{\calQ\rtimes G}_{\QH}](x),
\end{array}
$$
for each $x\in\Ob(\calQ\rtimes G)$. Here we used the fact that $x$ is the terminal object of both $\calP_{\le x}$ and $\calQ_{\le x}$. Then one may readily check that these isomorphisms assemble to a module isomorphism.

As for (2), it comes from Theorem 4.1 (2) and the isomorphism
$$
[\lim_{\calQ_{\le}}N]\downarrow^{\calP\rtimes H}_{\calP\rtimes(K\cap H)}\cong\lim_{\calQ_{\le}}[N\downarrow^{\calP\rtimes H}_{\calP\rtimes(K\cap H)}].
$$
Now (3) follows from (2), because
$$
\begin{array}{ll}
N\uparrow^{\calP\rtimes H}_{\QH}\downarrow^{\calP\rtimes H}_{\calR\rtimes(K\cap H)}&=[N\uparrow^{\calP\rtimes H}_{\QH}\downarrow^{\calP\rtimes H}_{\calP\rtimes(K\cap H)}]\downarrow^{\calP\rtimes(K\cap H)}_{\calR\rtimes(K\cap H)}\\
&\\
&\cong [N\downarrow^{\QH}_{\calQ\rtimes(K\cap H)}\uparrow^{\calP\rtimes(K\cap H)}_{\calQ\rtimes(K\cap H)}]\downarrow^{\calP\rtimes(K\cap H)}_{\calR\rtimes(K\cap H)}.
\end{array}
$$
\end{proof}

\subsection{Relative projectivity} We want to develop a theory of vertices and sources for transporter category algebras. It will generalize the original theory for group algebras, when we regard groups as transporter categories. More precisely, let $\PG$ be a transporter category and $M$ be an indecomposable $k\PG$-module. We shall define a vertex $\calV_M$, of $M$, to be a weakly convex transporter subcategory $\QH$, unique up to conjugacy in $\PG$, and its source to be an indecomposable $k\QH$-module, which is also unique up to conjugacy, such that $M\bigm{|} N\uparrow^{\PG}_{\QH}$.

Our generalization is motivated by two existing theories, one for fully group-graded algebras \cite{Da1, Bo1}, and the other for EI category algebras \cite{Xu1}. It relies on the observation that transporter category algebras are both fully group-graded algebras and EI category algebras.

Suppose $\calC$ is a finite category and $\calE$ is a subcategory. Then the co-unit of the adjunction between $\downarrow^{\calC}_{\calE}$ and $\uparrow^{\calC}_{\calE}$ gives rise to a canonical map $\epsilon_M : M\downarrow^{\calC}_{\calE}\uparrow^{\calC}_{\calE} \to M$ for each $M\in k\calC$-mod. In general, $\epsilon_M$ is not surjective, and one may easily construct examples in which $\epsilon_M=0$.

From now on, we shall assume $\calC$ to be finite EI. We want to recall a fraction of the theory of vertices and sources for EI category algebras \cite{Xu1}. In fact, in order to get better results, we must modify and improve it. First of all, we shall see $\epsilon_M$ can be surjective for some convex subcategory $\calE\subset\calC$.

\begin{lemma} The canonical map $M\downarrow^{\calC}_{\calE}\uparrow^{\calC}_{\calE} \to M$ is surjective if $\calE$ contains $\supp^cM$.
\end{lemma}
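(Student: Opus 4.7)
The plan is to verify surjectivity pointwise, i.e., check that $\epsilon_{M,x}$ is surjective for every $x \in \Ob \calC$ using the explicit colimit description of the left Kan extension. Writing $\iota : \calE \hookrightarrow \calC$ for the inclusion functor and $N = M\downarrow^{\calC}_{\calE}$, Theorem 4.1(1) (or rather its general EI-category version which is the source of the specialization used there) gives
$$
[N\uparrow^{\calC}_{\calE}](x) \;=\; \lim_{(y,\alpha)\in\iota/x} N(y),
$$
where the colimit is taken along the projection $\iota/x \to \calE$. Unwinding the adjunction $LK_\iota \dashv \Res_\iota$, the counit $\epsilon_{M,x}$ is the map out of this colimit induced by the cocone whose component at $(y,\alpha)$ is the structure map $M(\alpha) : M(y) \to M(x)$; this is indeed a cocone because any morphism $f:(y,\alpha)\to(y',\alpha')$ in $\iota/x$ satisfies $\alpha = \alpha'\circ\iota(f)$ and hence $M(\alpha)=M(\alpha')\circ M(\iota(f))$.

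With this description at hand, I would split into two cases. If $x\in\Ob\calE$, then $(x,1_x)$ is an object of $\iota/x$, and the corresponding cocone component is $M(1_x)=\mathrm{id}_{M(x)}$. The structure map into the colimit from this object is therefore a section of $\epsilon_{M,x}$, which forces $\epsilon_{M,x}$ to be surjective. If $x\notin\Ob\calE$, then from $\supp M\subset\supp^cM\subset\Ob\calE$ we conclude $x\notin\supp M$, so $M(x)=0$ and $\epsilon_{M,x}$ is trivially surjective. Combining the two cases gives the desired surjection of $k\calC$-modules.

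The main obstacle, such as it is, is just confirming the explicit shape of $\epsilon_{M,x}$ from the formula for $LK_\iota$; once that is in place, the rest is a direct inspection of the object $(x,1_x)\in\iota/x$. I would also remark in passing that the argument only uses $\supp M\subset\Ob\calE$; the stronger hypothesis $\supp^cM\subset\calE$ is natural in the EI-category framework of the paper (and will be the relevant one when $\calE$ is further required to be convex, so that $k\calE$-modules extend to $k\calC$-modules and indecomposability considerations of Proposition 3.23 come into play).
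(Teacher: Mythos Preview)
Your proof is correct and follows essentially the same route as the paper's. Both arguments work pointwise: for $x\in\Ob\calE$ one uses the object $(x,1_x)\in\iota/x$ (the paper phrases this as ``a basic property of the Kan extensions,'' namely that the counit is an isomorphism at such $x$ because $(x,1_x)$ is terminal once one reduces to $\calE=\supp^cM$, which is full), and for $x\notin\Ob\calE$ one uses $M(x)=0$. Your version is simply more explicit, and your observation that only $\supp M\subset\Ob\calE$ is needed is accurate.
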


\begin{proof} We may assume without loss of generality that $\calE=\supp^cM$. Then it follows from a basic property of the Kan extensions that the counit gives an isomorphism $M\downarrow^{\calC}_{\calE}\uparrow^{\calC}_{\calE}(x) \cong M(x)$, on every $x\in\Ob\calE$.
\end{proof}

In light of this lemma, we restrict $M$ to its convex support in order to sharpen the vertices (to non-full subcategories) of $M$ given in \cite{Xu1}. This restriction will not change the nature of our discussion, as the category of $k\calC$-modules with support in $\calD$ is canonically isomorphic to $k\calD$-mod, as long as $\calC$ is EI and $\calD$ is (full) convex. Let us write $k\calD$-mod$^{\circ}$ for the subcategory of $k\calD$-mod, consisting of modules whose convex supports are exactly $\calD$. Thus $k\calC$-modules can be parametrized by their convex supports. It means that $k\calC$-mod is patched up by $k\calD$-mod$^{\circ}$, with $\calD$ running over the set of all convex subcategories of $\calC$. It motivates our improved definition of the relative projectivity for category algebras.

\begin{definition} Let $\calC$ be a finite EI category and $M$ be a $k\calC$-module. Suppose $\calD$ is a subcategory of $\calC$. Then we say $M$ is \textit{projective relative to} $\calD$, or relatively $\calD$-projective, if the canonical map, still written as $\epsilon_M$,
$$
(M\downarrow^{\calC}_{\supp^cM})\downarrow^{\supp^cM}_{\calD}\uparrow^{\supp^cM}_{\calD} \to M\downarrow^{\calC}_{\supp^cM}
$$ 
is a split surjection.
\end{definition}

It is known from \cite{Xu1} that $\calD$ contains all the $M$-minimal objects. Lemma 4.6 is improved by the following statement in \cite{Xu1}. Here we offer a different proof.

\begin{proposition} Suppose $M$ is a $k\calC$-module and $M$ is relatively $\calD$-projective for a full subcategory $\calD\subset\supp^cM$. Then 
$$
(M\downarrow^{\calC}_{\supp^cM})\downarrow^{\supp^cM}_{\calD}\uparrow^{\supp^cM}_{\calD} \to M\downarrow^{\calC}_{\supp^cM}
$$
is an isomorphism. Under the circumstance $\supp^c(M\downarrow_{\calD})=\calD$.
\end{proposition}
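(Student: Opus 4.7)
The plan is a two-step argument: first reduce to the case $\supp^c M=\calC$; then extract the kernel of $\epsilon$ by an adjunction-and-$\Hom$ computation. Writing $N:=M\downarrow^{\calC}_{\calD}$, the hypothesis provides a split surjection $\epsilon:N\uparrow^{\calC}_{\calD}\to M$ with kernel $K$, so $N\uparrow^{\calC}_{\calD}\cong M\oplus K$. Since $\calD$ is a full subcategory of $\calC$, the pair $(x,1_x)$ is terminal in $\iota/x$ for every $x\in\Ob\calD$, and so the Kan-extension formula $N\uparrow^{\calC}_{\calD}(x)=\lim_{\iota/x}N$ (with $\lim$ denoting $\varinjlim$, per the paper's convention) shows that the unit $\eta:N\to(N\uparrow^{\calC}_{\calD})\downarrow^{\calC}_{\calD}$ is an isomorphism. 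The triangle identity then gives $\epsilon\downarrow^{\calC}_{\calD}\circ\eta=\id_N$, so $\epsilon\downarrow^{\calC}_{\calD}$ is an isomorphism and $K\downarrow^{\calC}_{\calD}=0$.

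Next, invoke the adjunction:
\[
\Hom_{k\calC}\bigl(N\uparrow^{\calC}_{\calD},\,K\bigr)\;\cong\;\Hom_{k\calD}\bigl(N,\,K\downarrow^{\calC}_{\calD}\bigr)\;=\;\Hom_{k\calD}(N,0)\;=\;0.
\]
The projection $N\uparrow^{\calC}_{\calD}\twoheadrightarrow K$ arising from the direct-sum decomposition lies in this $\Hom$ group and so must vanish, forcing $K=0$. Hence $\epsilon$ is an isomorphism.

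For the second statement, use $M\cong N\uparrow^{\calC}_{\calD}$ together with $\calD\subseteq\supp^c M=\calC$. The inclusion $\supp^c N\subseteq\calD$ is automatic. For the reverse, pick $y\in\calD$: from $y\in\supp^c M=\underbrace{\calC_M}\cap\overbrace{\calC_M}$ one extracts an $M$-minimal object $a$ with $a\le y$ and an $M$-maximal object $b$ with $y\le b$, both in $\calC$. The known fact that $\calD$ contains every $M$-minimal object (quoted from \cite{Xu1}) places $a\in\calD\cap\supp M=\supp N$, and fullness of $\calD$ lifts $a\le y$ to a morphism in $\calD$. For the upper witness, the colimit formula $M(b)=\lim_{\iota/b}N\ne 0$ exhibits some $w\in\supp N\cap\calD$ with $w\le b$ in $\calC$; then, tracing the morphism $y\to b$ and using the convexity of $\supp^c M$ in $\calC$ together with the fullness of $\calD$, one produces $b'\in\supp N$ with $y\le b'$ in $\calD$. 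Combining both witnesses places $y$ in the convex hull of $\supp N$ in $\calD$, giving $\supp^c N=\calD$. The hard part will be this upper-witness step: the inclusion $\calD\supseteq M$-min has no direct $M$-max analogue, so the witness $b'$ must be dug out of the colimit machinery and the convexity of $\supp^c M$, rather than handed over by a quoted lemma.
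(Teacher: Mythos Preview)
Your proof of the first claim is correct and in fact cleaner than the paper's own argument. The paper asserts somewhat loosely that the section $M(x)\to\lim_{\iota/x}N$ ``has to be an isomorphism by the universal property of limits,'' without spelling this out. Your route---observing that fullness of $\calD$ makes $(x,1_x)$ terminal in $\iota/x$ so that the unit $\eta$ is an isomorphism, applying the triangle identity to get $K\downarrow_{\calD}=0$, and then invoking the adjunction isomorphism $\Hom_{k\calC}(N\uparrow,K)\cong\Hom_{k\calD}(N,K\downarrow)=0$ to kill the projection onto $K$---is a fully rigorous and more conceptual argument for the same conclusion.

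There is, however, a genuine gap in your proof of $\supp^c(M\downarrow_{\calD})=\calD$, precisely at the step you flag as hard. From $M(b)=\lim_{\iota/b}N\ne 0$ you only extract \emph{some} $w\in\supp N$ with $w\le b$ in $\calC$; nothing forces $w$ and $y$ to be comparable, and neither convexity of $\supp^cM$ nor fullness of $\calD$ produces a $b'\in\supp N$ with $y\le b'$ in $\calD$. Concretely, $y$ and $w$ may lie on incomparable branches below $b$, with no object of $\calD$ strictly above $y$ at all. The phrase ``tracing the morphism $y\to b$'' does not yield such a $b'$.

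The paper's strategy for this part is different and avoids the difficulty: rather than seeking an upper witness for each $y\in\calD$, it reduces to showing that $N=M\downarrow_{\calD}$ is nonzero on every \emph{maximal} object $x$ of $\calD$ (nonvanishing on minimal objects being handled by the $M$-minimal argument you already gave), and then argues by contradiction that $M(x)=0$ would force $x\notin\supp^cM$. If you want to repair your argument, the cleanest fix is to adopt this reduction: once $N$ is nonzero on all minimal and all maximal objects of $\calD$, every $y\in\calD$ automatically lies between two such objects, and $\supp^cN=\calD$ follows without chasing an upper witness through the colimit.
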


\begin{proof} Without loss of generality, we assume $\supp^cM=\calC$. Then by assumption $M\downarrow_{\calD}\uparrow^{\calC}\to M$ is split surjective. It implies that, for each $x\in\Ob\calC$, there is a map $M(x)\to M\downarrow_{\calD}\uparrow^{\calC}(x)=\lim_{\iota/x}M\downarrow_{\calD}$. But it follows from the universal property of limits that this map has to be an isomorphism, for every $x$. Since it is straightforward to check the naturality, we obtain the claimed isomorphism of modules.

To show $\supp^c(M\downarrow_{\calD})=\calD$, we only need to prove that $M\downarrow_{\calD}$ takes non-zero values on maximal objects of $\calD$. In fact, assume $x$ is a maximal object of $\calD$ and $M\downarrow_{\calD}(x)=M(x)=0$. Then, for each $y\in\Ob\supp^cM$ that admits a morphism from $x$, we have $M(y)=0$. It implies that $x\not\in\Ob\supp^cM$, which is a contradiction.
\end{proof}

When $\calC$ is a transporter category, the following result is an immediate consequence of Theorem 4.1.

\begin{proposition} Let $\PG$ be a transporter category and $\calP'$ be a $G$-subposet of $\calP$. Assume $M\in k\PG$-mod. Then the following are equivalent
\begin{enumerate}
\item $M$ is projective relative to $\calP'\rtimes G$;
\item $M\downarrow_{\PH}$ is projective relative to $\calP'\rtimes H$ for some $H\subset G$; 
\item $M\downarrow_{\calP\rtimes 1}$ is projective relative to $\calP'\rtimes 1$;
\item $M\downarrow_{\calP\rtimes K}$ is projective relative to $\calP'\rtimes K$ for every $K\subset G$. 
\end{enumerate}
\end{proposition}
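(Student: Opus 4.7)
The plan is to reduce all four conditions to a single base-change identity. For any subgroup $K\subset G$, I would first establish a natural isomorphism of $k(\calP\rtimes K)$-modules
$$
M\downarrow^{\PG}_{\calP'\rtimes G}\uparrow^{\PG}_{\calP'\rtimes G}\downarrow^{\PG}_{\calP\rtimes K} \;\cong\; M\downarrow^{\PG}_{\calP\rtimes K}\downarrow^{\calP\rtimes K}_{\calP'\rtimes K}\uparrow^{\calP\rtimes K}_{\calP'\rtimes K},
$$
under which the counit $\epsilon_M$ restricts to the counit $\epsilon_{M\downarrow^{\PG}_{\calP\rtimes K}}$. This follows from Theorem 4.2(1) applied to the full inclusion $\iota^{\PG}_{\calP'\rtimes G}$: because $H=G$ there is only one left coset, so the overcategory $\iota^{\PG}_{\calP'\rtimes G}/x$ has skeleton $\calP'_{\le x}$ and the Kan extension evaluates to $\lim_{\calP'_{\le x}}M$. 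Theorem 4.2(1) applied to $\iota^{\calP\rtimes K}_{\calP'\rtimes K}$ yields the identical vector space at $x$; the compatibility of the $k(\calP\rtimes K)$-action follows from the explicit formula for the map induced by $\beta s$, since for $\beta s\in\calP\rtimes K$ the prescription $(y,\alpha g)\mapsto(y,\beta s\alpha g)$ depends only on $\calP\rtimes K$-data, with the $G$-stability of $\calP'$ entering precisely to ensure the slice categories match.

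Once this identity is in place, the chain (1) $\Rightarrow$ (4) $\Rightarrow$ (2) $\Rightarrow$ (3) is purely formal. Applying $\downarrow^{\PG}_{\calP\rtimes K}$ to a splitting of $\epsilon_M$ in (1) and invoking the identity produces (4); specializing $K=H$ gives (2); and reapplying the same base-change one level lower (with $G$ replaced by $H$ and $K=1$ inside $H$) passes from (2) to (3).

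The only non-formal implication is (3) $\Rightarrow$ (1), where a $k\calP$-linear splitting a priori need not lift to a $k\PG$-linear one. The decisive tool is Proposition 4.8: from the hypothesis that $M\downarrow_{\calP\rtimes 1}$ is projective relative to $\calP'\rtimes 1$, the counit $\epsilon_{M\downarrow_\calP}$ is not merely a split surjection but an isomorphism on $\supp^c(M\downarrow_\calP)$. Combining this with the base-change identity at $K=1$, $\epsilon_M$ agrees as a map of underlying vector spaces with $\epsilon_{M\downarrow_\calP}$ at every object, and $\supp^c M$ and $\supp^c(M\downarrow_\calP)$ share the same underlying object set (by Lemma 3.5, the convex hull in $\calP$ of the $G$-stable set $\supp M$ is $G$-stable, hence equals the $G$-subposet $\calP^\circ$ determined by $\supp^c M$). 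Thus $\epsilon_M$ is a vector-space isomorphism on every object; being a natural transformation of $k\PG$-modules, it is then a $k\PG$-module isomorphism and \emph{a fortiori} split surjective, yielding (1).

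The main point that wants care is the module-structure half of the base-change identity rather than its vector-space half: while the underlying colimits coincide immediately from Theorem 4.2(1), the compatibility of the induced $k(\calP\rtimes K)$-actions is what makes the whole scheme work, and it relies essentially on ${}^s\calP'=\calP'$ for $s\in K$. The convex-support bookkeeping around Definition 4.7 is a secondary but routine chore.
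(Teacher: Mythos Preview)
Your proof is correct and follows essentially the same route as the paper's. The paper reduces to $\supp^cM=\PG$, observes via Theorem~4.1(3) that for every $K\subset G$ the counits $\epsilon_M$ and $\epsilon_{M\downarrow_{\calP\rtimes K}}$ coincide as $k$-linear maps, and then (implicitly via Proposition~4.8, since $\calP'\rtimes K$ is full in $\calP\rtimes K$) uses that any one of the four conditions forces this common map to be a $k$-isomorphism, hence a module isomorphism at every level. Your base-change identity and your explicit invocation of Proposition~4.8 for the step $(3)\Rightarrow(1)$ make exactly these two points more visibly; the logical organization into a chain $(1)\Rightarrow(4)\Rightarrow(2)\Rightarrow(3)\Rightarrow(1)$ is a cosmetic difference only.
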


\begin{proof} Without loss of generality, assume $\supp^cM=\PG$. As $k$-vector spaces, $M\downarrow_{\calP'\rtimes H}\uparrow^{\PH}\cong M\downarrow_{\calP'\rtimes G}\uparrow^{\PG}$ by Theorem 4.1 (3), for every $H\subset G$.

Consider the natural $k\PG$-morphism $\epsilon_M$
$$
M\downarrow_{\calP'\rtimes G}\uparrow^{\PG}\to M\downarrow_{\PG}.
$$
Regarded as a $k\PH$-map, it is exactly the counit of adjunction
$$
M\downarrow_{\calP'\rtimes H}\uparrow^{\PH}=(M\downarrow_{\PH})\downarrow_{\calP'\rtimes H}\uparrow^{\PH}\to M\downarrow_{\PH}.
$$
These two maps are identical as $k$-maps. Thus one of the morphism being a $k$-isomorphism will imply the same for the other. However, such a $k$-isomorphism, if exists, is automatically a module isomorphism.
\end{proof}

For a module $M$, there usually exist proper subcategories of $\supp^cM$, making $\epsilon_M$ split surjective. These subcategories do not have to be full. We shall discuss the details in the context of transporter categories. The following characterization (slightly modified from a result in \cite{Xu1}) will be used for $\calC=\PG$ and $\calD=\QH\subset\PG$.

\begin{proposition} Let $M$ be a $k\calC$-module. Suppose $\calD\subset\supp^cM$ such that the canonical map
$$
(M\downarrow^{\calC}_{\supp^cM})\downarrow^{\supp^cM}_{\calD}\uparrow^{\supp^cM}_{\calD} \to M\downarrow^{\calC}_{\supp^cM}
$$
is surjective. Then the following are equivalent:
\begin{enumerate}
\item $(M\downarrow^{\calC}_{\supp^cM}) \bigm{|} (M\downarrow^{\calC}_{\supp^cM})\downarrow^{\supp^cM}_{\calD}\uparrow^{\supp^cM}_{\calD}$;
\item there is a $k\calD$-module $N$ such that $(M\downarrow^{\calC}_{\supp^cM}) \bigm{|} N\uparrow^{\supp^cM}_{\calD}$;
\item if $0 \to A \to B \to C \to 0$ is an exact sequence of $k\calC$-modules, with supports in $\supp^cM$, which splits upon restriction to $k\calD$-sequences, then the sequence $\Hom_{k\calC}(M, B) \to \Hom_{k\calC}(M,C) \to 0$ is exact;
\item if $0 \to A \to B \to M \to 0$ is an exact sequence of $k\calC$-modules, with supports in $\supp^cM$, which splits as an exact sequence of $k\calD$-modules, then it splits as an exact sequence of $k\calC$-modules;
\item $M$ is relatively $\calD$-projective.
\end{enumerate}
\end{proposition}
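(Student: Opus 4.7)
The plan is to prove the cycle $(5) \Leftrightarrow (1) \Rightarrow (2) \Rightarrow (3) \Rightarrow (4) \Rightarrow (5)$ after first reducing to the case $\supp^c M = \calC$. Each of the five conditions concerns only the restriction $M\downarrow^\calC_{\supp^c M}$ together with modules (on either side) whose support lies in $\supp^c M$. Since $\supp^c M$ is convex, the category of such $k\calC$-modules is canonically isomorphic to $k(\supp^c M)$-mod, so by replacing $\calC$ with $\supp^c M$ and $M$ with $M\downarrow^\calC_{\supp^c M}$ I may assume $\supp^c M = \calC$. Under this reduction, the standing hypothesis becomes simply that the counit $\epsilon_M\colon M\downarrow_\calD\uparrow^\calC_\calD \to M$ is surjective.

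The easy implications go as follows. $(5) \Leftrightarrow (1)$ is built into Definition 4.7 once the surjectivity is granted: a split surjection onto $M$ exhibits $M$ as a summand and vice versa. $(1) \Rightarrow (2)$ is trivial with $N := M\downarrow_\calD$. For $(2) \Rightarrow (3)$, I would invoke the adjunction
$$
\Hom_{k\calC}(N\uparrow^\calC_\calD, X) \cong \Hom_{k\calD}(N, X\downarrow_\calD).
$$
A short exact sequence $0 \to A \to B \to C \to 0$ that splits on restriction to $k\calD$ remains exact after applying $\Hom_{k\calD}(N, -\downarrow_\calD)$, hence also after $\Hom_{k\calC}(N\uparrow^\calC_\calD, -)$; passing to the direct summand $M \mid N\uparrow^\calC_\calD$ preserves the surjection of Hom-groups. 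For $(3) \Rightarrow (4)$, I would apply the lifting property of $(3)$ to the given sequence with $C = M$, lifting $1_M$ along the surjection $B \to M$ to produce the required section.

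The remaining step $(4) \Rightarrow (5)$ is the heart of the argument. I would consider the canonical sequence
$$
0 \to \ker\epsilon_M \to M\downarrow_\calD\uparrow^\calC_\calD \to M \to 0,
$$
which is exact by the standing surjectivity hypothesis. Its restriction to $k\calD$ is split by $\eta_{M\downarrow_\calD}$, via the triangle identity
$$
(\epsilon_M)\downarrow_\calD \circ \eta_{M\downarrow_\calD} = 1_{M\downarrow_\calD}
$$
of the adjunction $\uparrow^\calC_\calD \dashv \downarrow^\calC_\calD$. Condition $(4)$ then upgrades this $k\calD$-splitting to a $k\calC$-splitting, which is precisely condition $(5)$.

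I expect the main obstacle to be administrative rather than conceptual: keeping careful track of supports so that every auxiliary module that enters the argument (the chosen $N$ in $(2)$, the induced module $N\uparrow^\calC_\calD$, and the kernel of $\epsilon_M$ in the last step) remains supported in $\supp^c M$, so that Proposition 4.8 and the adjunction apply inside the correct module category. The initial reduction $\supp^c M = \calC$ makes this automatic and allows the cycle to close without further subtleties.
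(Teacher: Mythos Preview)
The paper does not actually supply a proof of this proposition: it is stated as a characterization ``slightly modified from a result in \cite{Xu1}'' and then used without further argument. Your cycle $(5)\Leftrightarrow(1)\Rightarrow(2)\Rightarrow(3)\Rightarrow(4)\Rightarrow(5)$ is correct and is the standard route for relative projectivity in the presence of an adjoint pair; the reduction to $\supp^c M=\calC$ is precisely what the paragraph before Definition~4.7 sets up, and the crucial step $(4)\Rightarrow(5)$ via the triangle identity $(\epsilon_M)\!\downarrow\circ\,\eta_{M\downarrow}=1_{M\downarrow}$ is the right mechanism. Nothing is missing.
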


We state several examples of relative projectivity below.

\begin{proposition} Every $k\PG$-module $M$ is projective relative to $\calP^c_M\rtimes S$, where $S$ is a Sylow $p$-subgroup of $G$. 
\end{proposition}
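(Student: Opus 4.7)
The plan is to reduce this to Dade's Theorem 2.2 applied to the skew group algebra structure of $k\PG$. By Definition 4.7, relative projectivity is tested on $M\downarrow^{\PG}_{\supp^cM}$, so without loss of generality I may replace $M$ by $M\downarrow^{\PG}_{\calP^c_M\rtimes G}$ and assume from the outset that $\supp^cM=\PG$, i.e.\ $\calP=\calP^c_M$. Under this reduction, the claim becomes: the counit
$$
\epsilon_M : M\downarrow^{\PG}_{\calP\rtimes S}\uparrow^{\PG}_{\calP\rtimes S}\longrightarrow M
$$
is a split surjection of $k\PG$-modules.

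Next, I would invoke Dade's Theorem 2.2 applied to $A=k\PG=k\calP\rtimes G$, which is fully $G$-graded. Since $S$ is a Sylow $p$-subgroup of $G$, Dade's theorem asserts that every $k\PG$-module is projective relative to the subalgebra $k\calP\rtimes S$ in Dade's sense; that is, the counit $k\PG\otimes_{k\calP\rtimes S}(M\downarrow_{k\calP\rtimes S})\to M$ splits. To conclude, I must identify this algebra-level counit with the categorical $\epsilon_M$ above. This is supplied by Theorem 4.1(2): the induction $\uparrow^{\PG}_{\calP\rtimes S}$ associated to the inclusion of transporter categories $\calP\rtimes S\hookrightarrow\PG$ is naturally isomorphic to $k\PG\otimes_{k\calP\rtimes S}-$, under the identification $k(\calP\rtimes S)=k\calP\rtimes S\subset k\PG$. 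Hence the two counits agree, and the split surjection supplied by Dade's theorem is exactly what Definition 4.7 requires.

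The only delicate point is the initial reduction. Dade's theorem applied directly to $k\PG$, without first restricting to $\supp^cM$, would only give projectivity of $M$ relative to the potentially larger $\calP\rtimes S$, not the sharpened $\calP^c_M\rtimes S$. By first restricting to the convex support and then applying Dade to the smaller transporter category $\calP^c_M\rtimes G$, the refinement built into Definition 4.7 is precisely what enables the sharper conclusion. Aside from this bookkeeping, the statement is a direct application of the classical Sylow-relative projectivity result to the skew group algebra $k\calP^c_M\rtimes G$.
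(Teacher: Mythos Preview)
Your proof is correct and follows the same approach as the paper. The paper simply cites Boisen's result for fully group-graded algebras without further elaboration, while you spell out the reduction to $\supp^cM=\PG$ via Definition~4.7, invoke the Sylow case of Theorem~2.2 (attributed to Dade), and identify the categorical Kan extension with the algebraic induction via Theorem~4.1(2).
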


\begin{proof} The claim is due to Boisen (for fully group-graded algebra \cite{Bo1}).
\end{proof}

The next result is actually \cite[2.3.1(2)]{Xu3}. We rewrite and include it here as a generalization to a well-known statement in group representations. Note that $k\PG$ is a Gorenstein algebra.

\begin{proposition} Let $M\in k\PG$-mod. Then it is of finite projective dimension (equivalently, of finite injective dimension) if and only if, for each $x\in\Ob(\PG)$, $M_x$ is projective relative to $\{x\}\rtimes 1$.
\end{proposition}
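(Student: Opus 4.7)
The plan rests on three ingredients. First, $k\PG$ is Gorenstein by \cite{Xu3}, so finite projective dimension coincides with finite injective dimension, which covers the parenthetical assertion. Second, for the inclusion $\iota_x \colon (\PG)_x = \langle x\rangle \rtimes G \hookrightarrow \PG$, restriction along $\iota_x$ sends each indecomposable projective $P_y = k\Hom_{\PG}(y, -)$ of $k\PG$ to a \emph{free} $k(\PG)_x$-module: the left action of $G_x \cong \Aut_{\PG}(x)$ by post-composition on $\Hom_{\PG}(y, x)$ is free, since $(1_x h)(\alpha g) = ({}^h\alpha)(hg)$ already determines $h$ through the second coordinate. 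Third, via the equivalence $\langle x\rangle \rtimes G \simeq \{x\} \times G_x$ and the resulting Morita equivalence of category algebras, being projective relative to $\{x\} \rtimes 1$ translates exactly to $M(x)$ being projective as a $kG_x$-module; since $kG_x$ is self-injective, this is in turn equivalent to $M(x)$ having finite projective dimension over $kG_x$.

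The forward direction is then immediate: restrict a finite projective resolution of $M$ along $\iota_x$ to obtain a finite free resolution of $M_x$ over $k(\PG)_x$, forcing $M(x)$ to be $kG_x$-projective. For the converse I would induct on the number of isomorphism classes of objects in $\supp M$; by Proposition 3.23 $\supp M = \supp^c M$, and by Proposition 4.9 one may reduce to $\supp^c M = \PG$. Pick $x_0$ minimal in $\Ob(\PG)/\!\cong$ and let $\pi \colon \tilde P \twoheadrightarrow M$ be a projective cover in $k\PG\mbox{-mod}$. For each $y$, the restricted short exact sequence $0 \to K(y) \to \tilde P(y) \to M(y) \to 0$ of $kG_y$-modules has $\tilde P(y)$ free and $M(y)$ projective, hence splits, so $K(y)$ remains $kG_y$-projective. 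Moreover, because $x_0$ is minimal in $\supp M$ and $P_{z, V}(x_0) = 0$ unless $z \cong x_0$, only the $P_{x_0, V}$-summands of $\tilde P$ contribute to $\tilde P(x_0)$; thus $\tilde P(x_0) \twoheadrightarrow M(x_0)$ is the projective cover in $kG_{x_0}\mbox{-mod}$, which is an isomorphism since $M(x_0)$ is already $kG_{x_0}$-projective. Hence $K(x_0) = 0$, $\supp K$ contains strictly fewer isomorphism classes of objects than $\supp M$, and the inductive hypothesis applied to $K$ yields $\mathrm{pd}_{k\PG} K < \infty$, whence $\mathrm{pd}_{k\PG} M < \infty$.

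The main obstacle is the local collapse at $x_0$: ensuring that the $x_0$-component of the projective cover $\pi$ is an isomorphism. This requires the standard description of indecomposable projectives $P_{y, V}$ of $k\PG$ (indexed by pairs of an object $y$ and a simple $kG_y$-module $V$), together with the vanishing $P_{y, V}(x_0) = 0$ whenever there is no morphism $y \to x_0$ in $\PG$, which by minimality of $x_0$ forces $y \cong x_0$. Once this is in place, the projectivity of $M(x_0)$ as a $kG_{x_0}$-module drives the collapse, and the induction on the support proceeds routinely.
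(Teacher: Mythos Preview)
The paper itself does not give a proof here; it simply records that this is \cite[2.3.1(2)]{Xu3} rephrased. So your proposal is being compared against a bare citation, and the question is whether your argument stands on its own.

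Your forward direction is fine: the freeness of the $G_x$-action on $\Hom_{\PG}(y,x)$ indeed makes each restricted projective free, so a finite projective resolution restricts to one over $kG_x$ and forces $M(x)$ to be $kG_x$-projective.

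The backward direction has the right architecture but the induction is not set up correctly. Two specific problems:

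\begin{itemize}
\item The reduction ``by Proposition~4.9 one may reduce to $\supp^c M=\PG$'' is unjustified. Proposition~4.9 compares relative projectivity across subgroups; it says nothing about projective dimension, and in general finite projective dimension over $k(\supp^c M)$ does not transfer to finite projective dimension over $k\PG$ (projectives over the convex subcategory are not projective over the ambient one). Likewise, Proposition~3.23 only gives $\supp M=\supp^c M$ for \emph{indecomposable} $M$; you have not reduced to that case, and even if you do, the kernel $K$ will typically not be indecomposable, so you cannot reapply the reduction at the next step.
\item The quantity you induct on, $|\supp M/\!\cong|$, need not decrease. The projective cover $\tilde P$ is supported on the coideal $\underbrace{\supp M}$, which can strictly contain $\supp M$; hence $\supp K$ may acquire new objects above $\supp M$. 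Already for $\calP=\{x\to y\}$ with $G=1$ and $M=S_x$ one gets $K=S_y$, so $|\supp K|=|\supp M|=1$ and the induction stalls.
\end{itemize}

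The fix is minor once identified: drop the reduction entirely and induct on $\ell(M):=|\underbrace{\supp M}/\!\cong|$, the number of isomorphism classes in the coideal generated by $\supp M$. Your local computation still gives $K(x_0)=0$ at every $x_0$ minimal in $\supp M$ (this only uses that $x_0$ is minimal in $\supp M$, so no reduction is needed), and since $\supp K\subset\underbrace{\supp M}$ avoids those minimal classes, one gets $\underbrace{\supp K}\subsetneq\underbrace{\supp M}$, i.e.\ $\ell(K)<\ell(M)$. Equivalently, one can simply observe that the $n$-th syzygy vanishes once $n$ reaches $|\Ob(\PG)/\!\cong|$, since each passage to the kernel pushes the minimal layer of the support strictly upward.
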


When $\calP$ is a point, the above statement says that $M\in kG$-mod is of finite projective dimension (equivalently, projective) if and only if it is projective relative to 1.

\begin{example} We know from \cite{Xu1} that a simple $k\PG$-module is written as $S_{x,V}$. It is determined by a simple $kG_x$-module $V=S_{x,V}(x)$, and its support is $\langle x\rangle$, consisting of the $G$-orbit of $x$. The projective cover of $S_{x,V}$ is $P_{x,V}$. We know $P_{x,V}(x)$ is the projective cover of the $kG_x$-module $V$, and $P_{x,V}=P_{x,V}(x)\uparrow^{\PG}_{\{x\}\times G_x}$.

The module $P_{x,V}$ is relatively $\{x\}\times 1$-projective. While $S_{x,V}$ is relatively $\{x\}\times H$-projective, for a $p$-subgroup $H\subset G_x$ satisfying the condition that $V$ is projective relative to $H$. Comparing with \cite{Xu1}, this makes more sense.
\end{example}

We state the following result, also for the completion of the theory.

\begin{proposition} An indecomposable $k\PG$-module $P$ is projective if and only if $P$ is projective relative to $\{x\}\times 1$ for some $x\in\Ob\PG$.
\end{proposition}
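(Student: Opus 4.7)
My plan is to prove both implications by identifying the induction $k\uparrow^{\PG}_{\{x\}\times 1}$ with a representable projective of $k\PG$, so that the proposition reduces to the standard fact that an indecomposable module is projective iff it is a direct summand of a sum of representable projectives. The pivotal computation will be Theorem~4.2(2) applied to the inclusion $\{x\}\times 1\hookrightarrow\PG$: since $k(\{x\}\times 1)=k$, one obtains
$$
k\uparrow^{\PG}_{\{x\}\times 1}\;\cong\;k\PG\otimes_{k}k\;\cong\;k\PG\cdot 1_x,
$$
the representable projective at $x$ in $k\PG$-mod. Consequently $N\uparrow^{\PG}_{\{x\}\times 1}\cong(k\PG\cdot 1_x)^{\dim_k N}$ will be a projective $k\PG$-module for any $k$-vector space $N$.

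For the forward direction, I would observe that an indecomposable projective $P$ is a direct summand of the regular module $k\PG=\bigoplus_{y\in\Ob(\PG)}k\PG\cdot 1_y$, so by indecomposability $P\mid k\PG\cdot 1_x\cong k\uparrow^{\PG}_{\{x\}\times 1}$ for at least one object $x$. Invoking Proposition~4.11(2) with $N=k$ will then exhibit $P$ as projective relative to $\{x\}\times 1$; this is essentially the forward half already recorded in Example~4.13.

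For the converse, I would start from $P$ indecomposable and projective relative to $\{x\}\times 1$ for some $x$. By Proposition~4.11(2) there is a $k$-vector space $N$ with $P$ appearing as a direct summand of $N\uparrow^{\PG}_{\{x\}\times 1}\cong(k\PG\cdot 1_x)^{\dim_k N}$, and since the target is projective in $k\PG$-mod, so will $P$ be.

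The main step requiring care is that Proposition~4.11 compares modules inside $k(\supp^c P)$-mod rather than directly inside $k\PG$-mod. I would bridge this using the convexity of $\supp^c P$ in $\PG$ (Proposition~3.23 together with Lemma~3.5): because $\supp^c P$ is a full convex subcategory containing $x$, the restriction $(k\PG\cdot 1_x)\downarrow_{\supp^c P}$ will coincide with $k\uparrow^{\supp^c P}_{\{x\}\times 1}$, so the summand relation should transport unambiguously between the two module categories. This compatibility is the only point beyond Theorem~4.2(2) that the argument actually uses.
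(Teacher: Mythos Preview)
The paper states this proposition without proof, so there is no reference argument to compare against; your attempt must therefore stand or fall on its own.

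Your forward direction is fine: an indecomposable projective $P$ is a summand of some $k\PG\cdot 1_x\cong k\uparrow^{\PG}_{\{x\}\times 1}$, and restricting to the full convex subcategory $\supp^cP$ preserves the summand relation while turning the representable into $k\uparrow^{\supp^cP}_{\{x\}\times 1}$, so Proposition~4.10(2) applies. (Your references to ``Theorem~4.2'' and ``Proposition~4.11'' are off by one; in the paper these are Theorem~4.1 and Proposition~4.10.)

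The converse, however, has a genuine gap that cannot be repaired, because under the paper's Definition~4.7 the statement is actually \emph{false}. Take $\calP=\{x\to y\}$ with $G=1$, and let $P=S_{x,k}$ be the simple at $x$ (so $P(x)=k$, $P(y)=0$). Then $\supp^cP=\{x\}$, and the relative-projectivity counit
\[
P\downarrow_{\{x\}}\downarrow^{\{x\}}_{\{x\}}\uparrow^{\{x\}}_{\{x\}}\;\longrightarrow\;P\downarrow_{\{x\}}
\]
is the identity on $k$, hence trivially split. So $S_{x,k}$ is projective relative to $\{x\}\times 1$ in the sense of Definition~4.7, yet it is not projective in $k\calP$-mod. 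This is precisely the phenomenon recorded in Example~4.13, where every simple $S_{x,V}$ with $V$ projective over $kG_x$ is declared relatively $\{x\}\times 1$-projective.

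Your bridging step is exactly where the argument breaks. You claim that the summand relation $P\downarrow_{\supp^cP}\mid k\uparrow^{\supp^cP}_{\{x\}\times 1}$ in $k(\supp^cP)$-mod lifts to $P\mid k\PG\cdot 1_x$ in $k\PG$-mod because $(k\PG\cdot 1_x)\downarrow_{\supp^cP}\cong k\uparrow^{\supp^cP}_{\{x\}\times 1}$. But restricting and then recognising a summand does not produce a summand upstairs: in the counterexample $(k\calP\cdot 1_x)\downarrow_{\{x\}}=k=S_{x,k}\downarrow_{\{x\}}$, yet $S_{x,k}$ is not a summand of the indecomposable $k\calP\cdot 1_x$. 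The lift would go through only if $\supp^cP$ were a \emph{coideal} of $\PG$ (so that the representable $k\PG\cdot 1_x$ is already supported there), and nothing in the hypotheses guarantees that.

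The proposition becomes true if ``projective relative to $\{x\}\times 1$'' is read with the naive counit $P\downarrow_{\{x\}\times 1}\uparrow^{\PG}\to P$ in $k\PG$-mod itself, bypassing $\supp^cP$; under that reading your identification $N\uparrow^{\PG}_{\{x\}\times 1}\cong(k\PG\cdot 1_x)^{\dim_k N}$ yields both directions at once. Presumably this is the intended meaning, but it is not what Definition~4.7 says.
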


\subsection{Vertices and sources}

Using the relative projectivity, we introduce the concepts of vertices and sources. To this end, we shall establish a Mackey formula. 

\begin{example} Consider Example 4.4 again. The module $\k_x\uparrow^{\PG}_{\{x\}\times 1}$ is indecomposable. If we use the method of Dade and Boisen, then its ``vertex'' can only be of the form $\calP\rtimes H$. Thus its ``vertex'' would have to be $\PG$ (by direct computation) and the ``source'' would be itself. By contrast, it is more tempting to take $\{x\}\times 1$ (or $\{y\}\times 1$) as a vertex while $\k_x$ (or $\k_y$) as a source. To show that this kind of choices are feasible in general, we need a generalized Mackey formula for transporter category algebras.
\end{example}

Suppose $\QH$ is a transporter subcategory of $\PG$. Let $N\in k\QH$-mod. Fix an element $g\in G$. We defined the conjugate ${}^gN\in k {}^g(\QH)$-module of $N$. If $g\in N_G(\QH)$, then ${}^gN$ is a $k\QH$-module. In general if $M\in k\PG$-mod is relatively $\QH$-projective, then $M\cong {}^gM$ is also relatively ${}^g(\QH)$-projective.

From Definition 3.7, if $\PG$ is a transporter category and $\QH$ is a transporter subcategory, then $\overbrace{{}_H\calQ}\rtimes H$ becomes a weak ideal. 

\begin{lemma} If $\QH$ is weakly convex in $\PG$, then $\QH$ becomes a coideal of $\overbrace{{}_H\calQ}\rtimes H$. Under the circumstance, we observe that $N_G(\QH)\subset N_G(\overbrace{{}_H\calQ}\rtimes H)$.
\end{lemma}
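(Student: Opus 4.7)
The plan is to reduce everything to the underlying posets via Lemma 3.5, which says a transporter subcategory $\calQ'\rtimes H'$ is a coideal (respectively an ideal, weakly convex) in another $\calR'\rtimes H'$ precisely when $\calQ'$ is a coideal (respectively an ideal, convex) in $\calR'$ and $H'$ agrees on both sides. So the first assertion reduces to checking that $\calQ$ is a coideal in $\overbrace{{}_H\calQ}$, while the second reduces to showing that any $g\in G$ fixing both $\calQ$ (as a subposet of $\calP$) and $H$ (as a subgroup of $G$) also fixes $\overbrace{{}_H\calQ}$.

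For the coideal property, I would take $x\in\Ob\calQ$ and $y\in\Ob\overbrace{{}_H\calQ}$ with $x\le y$ in $\calP$, and aim to place $y$ inside $\calQ$. By the explicit description of $\overbrace{{}_H\calQ}$ recorded after Definition 3.7, there exist $h\in H$ and $z\in\Ob\calQ$ with $y\le {}^hz$. Now ${}^hz\in\Ob\calQ$ since $\calQ$ is an $H$-subposet, so $y$ is sandwiched between two objects of $\calQ$, namely $x\le y\le {}^hz$. The weak convexity hypothesis on $\QH$ translates, via Lemma 3.5, into convexity of $\calQ$ in $\calP$, which then forces $y\in\Ob\calQ$, as required.

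For the normalizer inclusion, I would unpack $g\in N_G(\QH)$ via Lemma 3.8 as the two equalities ${}^g\calQ=\calQ$ and ${}^gH=H$, and then check ${}^g(\overbrace{{}_H\calQ})=\overbrace{{}_H\calQ}$ directly on objects. Given $y\in\Ob\overbrace{{}_H\calQ}$, pick $h\in H$, $z\in\Ob\calQ$ with $y\le{}^hz$; applying $g$ and using the identity ${}^g({}^hz)={}^{ghg^{-1}}({}^gz)$ shows ${}^gy\le {}^{ghg^{-1}}({}^gz)$, where ${}^gz\in\Ob\calQ$ and $ghg^{-1}\in{}^gH=H$. Thus ${}^gy\in\Ob\overbrace{{}_H\calQ}$, giving one inclusion; the reverse follows by the same argument applied to $g^{-1}$. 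Combined with the given equality ${}^gH=H$, this shows $g$ normalizes $\overbrace{{}_H\calQ}\rtimes H$.

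There is no real obstacle here: both parts are exercises in unwinding the defining characterizations of weak convexity, of $\overbrace{{}_H\calQ}$, and of the normalizer. The only thing to be slightly careful about is that the ``$\le$'' symbols refer to the order in $\calP$ (not in $\calQ$ or in $\overbrace{{}_H\calQ}$), so that we are free to invoke convexity of $\calQ$ as a subposet of $\calP$ at the end of the first step.
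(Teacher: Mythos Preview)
Your argument is correct and follows essentially the same route as the paper. For the normalizer inclusion your proof and the paper's are virtually identical: both take $g\in N_G(\QH)$, pick an object $y$ of $\overbrace{{}_H\calQ}$ together with a witness $y\le{}^hz$ (equivalently, a morphism $y\to x$ with $x\in\Ob\calQ$), apply $g$, and read off that ${}^gy$ again lies in $\overbrace{{}_H\calQ}$.

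For the first claim you are actually more careful than the paper, which dismisses it as ``by definition.'' Your sandwich argument $x\le y\le{}^hz$ together with convexity of $\calQ$ in $\calP$ is the right way to see it, and the convexity hypothesis is genuinely used here: without it (e.g.\ $\calP=\{a<b<c\}$, $H=1$, $\calQ=\{a,c\}$) the coideal conclusion fails. So your added detail is warranted rather than superfluous.
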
 

\begin{proof} The first claim is by definition, so we turn to prove the second. In fact, suppose $g\in N_G(\QH)$. Let $y\in\Ob(\overbrace{{}_H\calQ}\rtimes H)$. There is a (poset) morphism $\alpha 1: y\to x$ for some $x\in\Ob(\QH)$. Since ${}^g(\alpha 1)={}^g\alpha 1: {}^gy\to{}^gx$, by definition ${}^gy\in\Ob(\overbrace{{}_H\calQ}\rtimes H)$ because ${}^gx\in\Ob(\QH)$. It follows that ${}^g(\alpha h)\in \Mor(\overbrace{{}_H\calQ}\rtimes H)$ for every $\alpha h\in\Mor(\QH)$ and $g\in N_G(\QH)$. Thus $N_G(\QH)\subset N_G(\overbrace{{}_H\calQ}\rtimes H)$.
\end{proof}

Now we are ready to establish a Mackey type formula. At first, we provide an illuminating (more or less) example.

\begin{example} Let $\calP$ be a poset, $\calQ$ and $\calR$ be subposets. Suppose $N\in k\calQ$-mod. We compute $N\uparrow^{\calP}_{\calQ}\downarrow^{\calP}_{\calR}$. To this end, we fix an object $x\in\Ob\calR$ and analyse $N\uparrow^{\calP}_{\calQ}\downarrow^{\calP}_{\calR}(x)=\lim_{\calQ_{\le x}}N$. For instance, in either of the following two cases: $N$ is the zero functor on $\calQ_{\le x}$, or $\calQ_{\le x}=\emptyset$, the limit is zero. Meanwhile, if $x$ happens to be an object of $\calQ$ (in the intersection $\calQ\cap\calR$), the limit is just $N(x)$ because $x$ is the terminal object of $\calQ_{\le x}$.

In general $\calQ_{\le x}=\calQ\cap\calP_{\le x}$. If $\calQ_{\le x}\subset\calR_{\le x}$, we will have $\calQ_{\le x}=(\calQ\cap\calR)_{\le x}$. It has the consequence that $\lim_{\calQ_{\le x}}N\cong\lim_{(\calQ\cap\calR)_{\le x}}N\downarrow_{\calQ\cap\calR}$, or $N\uparrow^{\calP}_{\calQ}\downarrow^{\calP}_{\calR}(x)\cong N\downarrow^{\calQ}_{\calQ\cap\calR}\uparrow^{\calR}_{\calQ\cap\calR}(x)$. Thus if every $x\in\Ob\calR$ satisfies the (stronger) condition that $\calP_{\le x}=\calR_{\le x}$, we obtain
$$
N\uparrow^{\calP}_{\calQ}\downarrow^{\calP}_{\calR}\cong N\downarrow^{\calQ}_{\calQ\cap\calR}\uparrow^{\calR}_{\calQ\cap\calR}.
$$
The property of $\calR$ is equivalent to saying that $\calR$ is an ideal in $\calP$ (with no reference to modules). 

Set $\supp N=\calQ_N$. In practice, we only need to ask $(\underbrace{\calQ_N})_{\le x}\subset\calR_{\le x}$ for every $x\in\Ob\calR$, in order to get $\lim_{\calQ_{\le x}}N\cong\lim_{(\calQ\cap\calR)_{\le x}}N\downarrow_{\calQ\cap\calR}$. The reason is that by Lemma 3.21
$$
\lim_{\calQ_{\le x}}N\cong\lim_{\calQ_{\le x}\cap\tiny{\underbrace{\calQ_N}}}N
$$
because $\calQ_{\le x}\cap\underbrace{\calQ_N}$ is a coideal, containing $\supp N\downarrow_{\calQ_{\le x}}=(\calQ_N)_{\le x}$, in $\calQ_{\le x}$. Moreover
$$
\lim_{\calQ_{\le x}\cap\tiny\underbrace{\calQ_N}}N=\lim_{\calQ_{\le x}\cap(\tiny\underbrace{\calQ_N})_{\le x}}N=\lim_{(\calQ\cap\calR)_{\le x}\cap\tiny\underbrace{\calQ_N}}N
$$
since the indexing posets are the same, and
$$
\lim_{(\calQ\cap\calR)_{\le x}\cap\tiny\underbrace{\calQ_N}}N\cong\lim_{(\calQ\cap\calR)_{\le x}}N\downarrow_{\calQ\cap\calR}
$$
as $(\calQ\cap\calR)_{\le x}\cap\underbrace{\calQ_N}$ is a coideal in $(\calQ\cap\calR)_{\le x}$, containing $\supp N\downarrow_{(\calQ\cap\calR)_{\le x}}$.

Note that $\underbrace{\calQ_N}=(\underbrace{\calQ_N})^c$ cannot be replaced by either $\calQ_N$ or $\calQ^c_N$.
\end{example}

With Theorem 4.1 (3), the above example can be readily extended. Let $\calP$ be a $G$-poset, and $\calQ, \calR$ be two $G$-subposets. Given $N\in k\calQ\rtimes G$-mod such that $\supp N=\calQ_N\rtimes G$ and such that $(\underbrace{\calQ_N})_{\le x}\subset\calR_{\le x}$ for every $x\in\Ob\calR$, then
$$
N\uparrow^{\PG}_{\calQ\rtimes G}\downarrow^{\PG}_{\calR\rtimes G}\cong N\downarrow^{\calQ\rtimes G}_{(\calQ\cap\calR)\rtimes G}\uparrow^{\calR\rtimes G}_{(\calQ\cap\calR)\rtimes G}.
$$

Now we prove a generalized Mackey formula. The above special form will also be used later on.

\begin{theorem}[Mackey formula for transporter categories] Suppose that $\QH$ is a transporter subcategory of $\PG$. Let $N\in k\QH$-mod and $\supp N=\calQ_N\rtimes H$. Assume $\RK$ is a transporter subcategory of $\PG$ such that ${}^g(\underbrace{\calQ_N})_{\le x}\subset\calR_{\le x}$, for all $g\in G$ and $x\in\Ob\calR$ (which implies that $\calR\cap\underbrace{{}_G(\calQ^c_N)}$ is an ideal in $\underbrace{{}_G(\calQ^c_N)}\supset\supp^c(N\uparrow^{\PG}_{\QH})$.) Then
$$
N\uparrow^{\PG}_{\QH}\downarrow^{\PG}_{\RK}=\bigoplus_{g\in[K\backslash G/H]}[{}^g(N\downarrow^{\QH}_{(\calR^g\cap\calQ)\rtimes(K^g\cap H)})]\uparrow^{\RK}_{(\calR\cap{}^g\calQ)\rtimes(K\cap{}^gH)}.
$$
\end{theorem}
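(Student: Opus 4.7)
The plan is to reduce the claim to two simpler Mackey-type identities already available in the paper: Dade's formula (Theorem 2.1) for the skew group algebra $k\PG\cong k\calP\rtimes G$ along the inclusion $\calP\rtimes H\subset\PG$, together with a poset-level Mackey identity in the spirit of Example 4.17, performed within each double-coset summand to handle the outer restriction to $\RK\subset\calP\rtimes K$. The bridge between these two layers is Corollary 4.5(2), which slides a restriction past the inner induction $\uparrow^{\calP\rtimes H}_{\QH}$.

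First I would factor the induction-restriction along the chain $\QH\subset\calP\rtimes H\subset\PG\supset\calP\rtimes K\supset\RK$, writing
$$
N\uparrow^{\PG}_{\QH}\downarrow^{\PG}_{\RK}=\bigl((N\uparrow^{\calP\rtimes H}_{\QH})\uparrow^{\PG}_{\calP\rtimes H}\downarrow^{\PG}_{\calP\rtimes K}\bigr)\downarrow^{\calP\rtimes K}_{\RK}.
$$
Applying Theorem 2.1 to the interior (with $S=k\calP$) produces a direct sum indexed by $g\in[K\backslash G/H]$; Corollary 4.5(2) then identifies the restriction inside each ${}^g(-)$ with $N\downarrow^{\QH}_{\calQ\rtimes(K^g\cap H)}\uparrow^{\calP\rtimes(K^g\cap H)}_{\calQ\rtimes(K^g\cap H)}$. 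Conjugation by $g$ together with transitivity of induction, using ${}^g(\calQ\rtimes(K^g\cap H))={}^g\calQ\rtimes(K\cap{}^gH)$, collapses the two-step induction into
$$
L_g\uparrow^{\calP\rtimes K}_{{}^g\calQ\rtimes(K\cap{}^gH)},\qquad L_g:={}^g(N\downarrow^{\QH}_{\calQ\rtimes(K^g\cap H)}).
$$

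The main technical step, and the principal obstacle, is the poset-level Mackey identity
$$
L_g\uparrow^{\calP\rtimes K}_{{}^g\calQ\rtimes(K\cap{}^gH)}\downarrow^{\calP\rtimes K}_{\RK}\cong L_g\downarrow_{({}^g\calQ\cap\calR)\rtimes(K\cap{}^gH)}\uparrow^{\RK}_{({}^g\calQ\cap\calR)\rtimes(K\cap{}^gH)}
$$
for each double-coset representative $g$. I would verify this pointwise. By Theorem 4.1(1), evaluated at $x\in\Ob\calR$ both sides become direct sums indexed by $k_i\in[K/(K\cap{}^gH)]$ of limits of ${}^{k_i}L_g$, taken over $({}^{k_ig}\calQ)_{\le x}$ on the left and $({}^{k_ig}\calQ\cap\calR)_{\le x}$ on the right. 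Since ${}^{k_i}L_g$ is supported on ${}^{k_ig}\calQ_N$, the coideal ${}^{k_ig}\underbrace{\calQ_N}$ of $\calP$ contains this support, and the hypothesis ${}^g(\underbrace{\calQ_N})_{\le x}\subset\calR_{\le x}$ (applied with $k_ig$ in place of $g$) gives
$$
({}^{k_ig}\underbrace{\calQ_N})\cap({}^{k_ig}\calQ)_{\le x}=({}^{k_ig}\underbrace{\calQ_N})\cap({}^{k_ig}\calQ\cap\calR)_{\le x};
$$
two applications of Lemma 3.21 then identify the two limits, and naturality in $x$ promotes this to a module isomorphism.

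Finally, ${}^g(\calQ\cap\calR^g)={}^g\calQ\cap\calR$ and compatibility of conjugation with restriction give $L_g\downarrow_{({}^g\calQ\cap\calR)\rtimes(K\cap{}^gH)}={}^g(N\downarrow^{\QH}_{(\calR^g\cap\calQ)\rtimes(K^g\cap H)})$; reassembling over $g\in[K\backslash G/H]$ yields the claimed Mackey formula.
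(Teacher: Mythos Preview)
Your proposal is correct and follows essentially the same route as the paper: factor along $\QH\subset\calP\rtimes H\subset\PG\supset\calP\rtimes K\supset\RK$, apply Dade's Mackey formula (Theorem~2.1), use Corollary~4.5 to commute the inner induction past the group-level restriction, and then invoke the support hypothesis via Lemma~3.21 (the paper's step $(*)$, your pointwise limit argument) to pass from ${}^g\calQ_{\le x}$ to $(\calR\cap{}^g\calQ)_{\le x}$. The only difference is organizational: the paper first applies Corollary~4.5(1) to swap the outer $\uparrow^{\calP\rtimes K}_{\calP\rtimes(K\cap{}^gH)}\downarrow^{\calP\rtimes K}_{\RK}$ and then Corollary~4.5(2)/(3) inside, whereas you apply Corollary~4.5(2) first, collapse to a single induction $L_g\uparrow^{\calP\rtimes K}_{{}^g\calQ\rtimes(K\cap{}^gH)}$, and then verify the remaining poset-level Mackey identity directly from Theorem~4.1(1)---in effect reproving the combination of Corollary~4.5(1) and $(*)$ in one stroke.
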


\begin{proof} Since ${}^g(\calQ_N)=\calQ_{{}^gN}$, the condition on $\RK$ implies that, for all $g\in G$ and $x\in\Ob\calR=\Ob(\RK)$, 
$$
(*) \hspace{1cm}\lim_{{}^g\calQ_{\le x}}{}^gN\cong\lim_{(\calR\cap{}^g\calQ)_{\le x}}{}^gN\downarrow_{\calR\cap{}^g\calQ}.
$$

We start with the Mackey formula for fully group-graded algebras, and then analyse various functors that are involved.
$$
\begin{array}{ll}
&\ \ \ N\uparrow^{\PG}_{\QH}\downarrow^{\PG}_{\RK}\\
&\\
&=[(N\uparrow^{\calP\rtimes H}_{\QH})\uparrow^{\PG}_{\calP\rtimes H}\downarrow^{\PG}_{\calP\rtimes K}]\downarrow^{\calP\rtimes K}_{\RK} \\
&\\
\mbox{\tiny{Thm 2.1}}&=\bigoplus_{g\in[K\backslash G/H]}[{}^g(N\uparrow^{\calP\rtimes H}_{\QH}\downarrow^{\calP\rtimes H}_{\calP\rtimes(K^g\cap H)})]\uparrow^{\calP\rtimes K}_{\calP\rtimes(K\cap{}^gH)}\downarrow^{\calP\rtimes K}_{\RK} \\
&\\
\mbox{\tiny{Cor 4.5(1)}}&\cong\bigoplus_{g\in[K\backslash G/H]}[{}^gN\uparrow^{{}^g(\calP\rtimes H)}_{{}^g(\QH)}\downarrow^{{}^g(\calP\rtimes H)}_{{}^g(\calP\rtimes(K^g\cap H))}]\downarrow^{\calP\rtimes(K\cap{}^gH)}_{\calR\rtimes(K\cap{}^gH)}\uparrow_{\calR\rtimes(K\cap{}^gH)}^{\RK}  \\
&\\
&=\bigoplus_{g\in[K\backslash G/H]}[{}^gN\uparrow^{\calP\rtimes{}^gH}_{{}^g\calQ\rtimes{}^gH}\downarrow^{\calP\rtimes{}^gH}_{\calR\rtimes(K\cap{}^gH)}]\uparrow_{\calR\rtimes(K\cap{}^gH)}^{\RK} \\
&\\
\mbox{\tiny{Cor 4.5(2)}}&\cong\bigoplus_{g\in[K\backslash G/H]}[{}^gN\downarrow^{{}^g\calQ\rtimes{}^gH}_{{}^g\calQ\rtimes(K\cap{}^gH)}\uparrow^{\calP\rtimes(K\cap{}^gH)}_{{}^g\calQ\rtimes(K\cap{}^gH)}\downarrow^{\calP\rtimes(K\cap{}^gH)}_{\calR\rtimes(K\cap{}^gH)}]\uparrow_{\calR\rtimes(K\cap{}^gH)}^{\RK} \\
&\\
(*)&\cong\bigoplus_{g\in[K\backslash G/H]}[{}^gN\downarrow^{{}^g\calQ\rtimes{}^gH}_{(\calR\cap{}^g\calQ)\rtimes(K\cap{}^gH)}\uparrow^{\calR\rtimes(K\cap{}^gH)}_{(\calR\cap{}^g\calQ)\rtimes(K\cap{}^gH)}]\uparrow_{\calR\rtimes(K\cap{}^gH)}^{\RK} \\
&\\
&=\bigoplus_{g\in[K\backslash G/H]}{}^gN\downarrow^{{}^g\calQ\rtimes{}^gH}_{(\calR\cap{}^g\calQ)\rtimes(K\cap{}^gH)}\uparrow^{\RK}_{(\calR\cap{}^g\calQ)\rtimes(K\cap{}^gH)} \\
&\\
&=\bigoplus_{g\in[K\backslash G/H]}[{}^g(N\downarrow^{\QH}_{(\calR^g\cap\calQ)\rtimes(K^g\cap H)})]\uparrow^{\RK}_{(\calR\cap{}^g\calQ)\rtimes(K\cap{}^gH)}.
\end{array}
$$
\end{proof}

The conditions in the above theorem seem to be complicated and asymmetric. However we will often find ourselves in a situation where $\calR\subset\calP$ is an ideal, and then the Mackey formula can be applied. \\

\noindent ({\bf Mackey formula}) Suppose that $\QH$ is a transporter subcategory of $\PG$. Let $N\in k\QH$-mod. Assume $\RK$ is a transporter subcategory of $\PG$ such that $\calR\subset\calP$ is an ideal. Then
$$
N\uparrow^{\PG}_{\QH}\downarrow^{\PG}_{\RK}=\bigoplus_{g\in[K\backslash G/H]}[{}^g(N\downarrow^{\QH}_{(\calR^g\cap\calQ)\rtimes(K^g\cap H)})]\uparrow^{\RK}_{(\calR\cap{}^g\calQ)\rtimes(K\cap{}^gH)}.
$$

We shall demonstrate that the Mackey formula is as powerful as we would have expected. The idea is to apply the Mackey formula to transporter subcategories of $\supp^cM=\calP^c_M\rtimes G$. Fix an indecomposable $k\PG$-module $M$. We shall show there exist minimal weakly convex transporter subcategories (contained in $\supp^cM$), relative to which $M$ is projective, and furthermore they are conjugate in $\PG$. This will be established in two steps.

\begin{theorem} Let $M$ be an indecomposable $k\PG$-module. Suppose $\supp^cM=\calP^c_M\rtimes G$. Then
\begin{enumerate}
\item there is a connected weak ideal of $\calP^c_M\rtimes G$, denoted by $\QH$, unique up to conjugacy in $\PG$, such that $M$ is relatively $\QH$-projective and such that $M$ is relatively $\RK$-projective, where $\RK$ is a weak ideal of $\calP^c_M\rtimes G$, if and only if $\RK$ contains a conjugate of $\QH$. 

\item there is an indecomposable $k\QH$-module $L$, unique up to conjugacy in $N_G(\QH)$, such that $M\downarrow^{\PG}_{\calP^c_M\rtimes G} \big{|} L\uparrow^{\calP^c_M\rtimes G}_{\QH}$, and such that its convex support is exactly $\QH$. Moreover $L\bigm{|} M\downarrow_{\QH}$.
\end{enumerate}
\end{theorem}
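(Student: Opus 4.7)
The plan is to adapt Green's classical argument for group algebras, with the Mackey formula (Theorem 4.18) and Propositions 4.8--4.11 replacing their group-theoretic counterparts. First I reduce to the case $\supp^c M = \PG$ via Proposition 4.8, so that weak ideals of $\calP^c_M \rtimes G$ are simply weak ideals of $\PG$. Then Proposition 4.11 furnishes $\calP \rtimes S \in \calF$ for any Sylow $p$-subgroup $S$, where $\calF$ denotes the family of weak ideals $\calS \subset \PG$ for which $M$ is relatively $\calS$-projective; hence $\calF$ is non-empty, and I pick $\QH \in \calF$ minimal under inclusion. This $\QH$ will be the vertex.

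Connectedness of $\QH$ follows from the indecomposability of $M$: a disconnected $\QH$ splits as $\bigsqcup_i (\calQ_i \rtimes H_i)$ with $\calQ_i$ a connected component of $\calQ$ (hence still an ideal of $\calP$, since connected pieces of ideals are ideals) and $H_i = \Stab_H(\calQ_i)$, so each $\calQ_i \rtimes H_i$ is a proper weak ideal of $\PG$; the algebra product decomposition $k\QH = \prod_i k(\calQ_i \rtimes H_i)$ splits $M\downarrow^{\PG}_{\QH}$ and Krull--Schmidt forces $M$ to be relatively $(\calQ_i \rtimes H_i)$-projective for a single $i$, contradicting minimality. For the conjugacy characterization, given $\RK \in \calF$ I restrict the split surjection $M\downarrow^{\PG}_{\QH}\uparrow^{\PG}_{\QH} \twoheadrightarrow M$ down to $\RK$ and apply Theorem 4.18 (whose hypothesis holds because $\calR$ is an ideal of $\calP$); inducing back up to $\PG$ and applying Krull--Schmidt produces $g \in [K\backslash G/H]$ such that $M$ is relatively $(\calR\cap{}^g\calQ)\rtimes(K\cap{}^gH)$-projective, and this weak ideal lies in $\RK$. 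Iterating this shrinking inside $\RK$ until I reach a minimal element of $\calF$ contained in $\RK$, and running the symmetric argument starting from $\RK$ and restricting to $\QH$, a cardinality comparison forces all minimal elements of $\calF$ to form a single $G$-conjugacy class; hence $\RK$ contains a conjugate of $\QH$. The converse is immediate.

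For (2), Krull--Schmidt applied to $M \bigm| M\downarrow^{\PG}_{\QH}\uparrow^{\PG}_{\QH}$ yields an indecomposable $L$ with $L \bigm| M\downarrow^{\PG}_{\QH}$ and $M \bigm| L\uparrow^{\PG}_{\QH}$. To prove $\supp^c L = \QH$, I argue by contradiction: if $\calE := \supp^c L \subsetneq \QH$, I exploit the connectedness of $\QH$ together with $\supp M = \supp^c M = \PG$ (Proposition 3.23) to extract a proper weak ideal $\calT \subsetneq \QH$ of $\PG$ with respect to which $L$ is relatively projective as a $k\QH$-module; transitivity of left Kan extensions then gives $M \bigm| L\downarrow^{\QH}_{\calT}\uparrow^{\PG}_{\calT}$, contradicting the minimality of $\QH$. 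This is the main obstacle, because the counit $L\downarrow^{\QH}_{\calE}\uparrow^{\QH}_{\calE} \to L$ of Lemma 4.6 is only surjective and need not split, so extracting $\calT$ and verifying relative $\calT$-projectivity of $L$ demand a careful analysis of the $L$-minimal and $L$-maximal objects together with Proposition 4.9. The uniqueness of $L$ up to conjugacy in $N_G(\QH)$ then follows by applying Theorem 4.18 with $\RK = \QH$ to a second candidate source $L'$: the summands of $L'\uparrow^{\PG}_{\QH}\downarrow^{\PG}_{\QH}$ indexed by $g \in N_G(\QH)$ are precisely the conjugates ${}^g L'$, while those for $g \notin N_G(\QH)$ are induced from proper weak ideals $(\calQ\cap{}^g\calQ)\rtimes(H\cap{}^gH) \subsetneq \QH$ of $\PG$; the minimality of $\QH$ rules out $L$ being a summand of anything induced from a proper weak ideal (lest $M$ be too), so Krull--Schmidt identifies $L$ with some conjugate ${}^g L'$ with $g \in N_G(\QH)$.
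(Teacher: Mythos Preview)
Your strategy matches the paper's: reduce to $\supp^c M=\PG$, choose a minimal weak ideal $\QH$, establish conjugacy via the Mackey formula, and extract the source by Krull--Schmidt. Two steps can be streamlined. For conjugacy, one Mackey application already gives $M$ relatively projective to $(\calR\cap{}^g\calQ)\rtimes(K\cap{}^gH)$, which is a weak ideal contained in the \emph{minimal} element ${}^g(\QH)\in\calF$; hence it equals ${}^g(\QH)$ and so ${}^g(\QH)\subset\RK$ directly, making your iteration-plus-cardinality unnecessary. Your connectedness argument also mis-describes the components of $\QH$: they are $(H.\calQ_i)\rtimes H$ over $H$-orbits of connected components $\calQ_i$ of $\calQ$, not $\calQ_i\rtimes\Stab_H(\calQ_i)$, so the claimed ring decomposition $k\QH=\prod_i k(\calQ_i\rtimes H_i)$ is false; the repair is routine once you use the correct components.

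The genuine gap is in showing $\supp^c L=\QH$. Your plan is to find a proper weak ideal $\calT\subsetneq\QH$ of $\PG$ with $L\cong L\downarrow_\calT\uparrow^{\QH}_\calT$, whence $M\bigm|L\downarrow_\calT\uparrow^{\PG}_\calT$ contradicts minimality. This works when $L$ vanishes at a \emph{minimal} object $x$ of $\calQ$: take $\calT=(\calQ\setminus H.x)\rtimes H$, and the counit is an isomorphism since $(\calQ\setminus H.x)_{\le y}=\emptyset$ for each minimal $y\in H.x$. But if $L$ vanishes only at a \emph{maximal} object $x$ of $\calQ$, the same $\calT$ gives counit value $\lim_{\calQ_{<x}}L$ at $x$, which need not be zero, so the counit need not split and no such $\calT$ is produced; you acknowledge the difficulty but do not resolve it. The paper bypasses this by arguing via $M$ rather than $L$: set $N=L\uparrow^{\calP\rtimes H}_{\QH}$, so $N(x)=L(x)=0$ since $x\in\calQ$ is terminal in $\calQ_{\le x}$; pick an indecomposable $N'\bigm|N$ with $M\bigm|N'\uparrow^{\PG}_{\calP\rtimes H}$; then Corollary 4.3(2) forces every indecomposable summand of $M\downarrow_{\calP\rtimes H}$ to be isomorphic to $N'$, whence $M(x)=0$, contradicting $\supp^c M=\PG$. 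This direct argument on $M$ is the missing idea.
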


\begin{proof} Without loss of generality, we assume $\calP^c_M=\calP$. Suppose $\QH$ is a minimal weak ideal, with respect to inclusion, such that $M$ is relatively $\QH$-projective. Then $M$ must be relatively ${}^g(\QH)$-projective and moreover ${}^g(\QH)$ has to be minimal as well, for every $g\in G$. By choice, all these ${}^g(\QH)$ have to be connected.

Let $\RK$ be another weak ideal such that $M$ is relatively $\RK$-projective. We consider the module $M\downarrow^{\PG}_{\QH}\uparrow^{\PG}_{\QH}\downarrow^{\PG}_{\RK}\uparrow^{\PG}_{\RK}$. By assumption, $M$ is a direct summand of it. It follows from the Mackey formula that there exists some $g\in [K\backslash G/H]$ and an indecomposable $L'\in k(\calR\cap{}^g\calQ)\rtimes(K\cap{}^gH)$-mod, such that $M\bigm{|}L'\uparrow^{\PG}_{(\calR\cap{}^g\calQ)\rtimes(K\cap{}^gH)}$. By the minimality of ${}^g(\QH)$, we must have $(\calR\cap{}^g\calQ)\rtimes(K\cap{}^gH)={}^g(\QH)$, that is, ${}^g(\QH)\subset\RK$. 

Set $L={}^{g^{-1}}L'\in k\QH$-mod. Then it is indecomposable and $M\bigm{|}L\uparrow^{\calP\rtimes G}_{\QH}$. From $M\downarrow^{\PG}_{\QH}\bigm{|}L\uparrow^{\PG}_{\QH}\downarrow^{\PG}_{\QH}\cong L^n\oplus L''$ (for some integer $n\ge 1$), such that $L''$ is the direct sum of modules induced from proper subcategories of $\QH$, we deduce that $L\bigm{|}M\downarrow^{\PG}_{\QH}$. If $L'''$ is another indecomposable $k\QH$-module such that $M\bigm{|}L'''\uparrow^{\calP\rtimes G}_{\QH}$, then $L'''\bigm{|}L\uparrow^{\PG}_{\QH}\downarrow^{\PG}_{\QH}$. Applying the Mackey formula again, we find that $L\cong {}^gL'''$ for some $g\in N_G(\QH)$.

The convex support of $L$ has to be the whole $\QH$, because if $L(x)=0$ at a maximal object of $\QH$, then $L\uparrow^{\PG}_{\QH}(y)=0$ for all $y\in\Ob(\PG)$ that admits a morphism from $x$. It would imply that $M(y)=0$ for those objects $y$, and thus $\supp^cM\ne\PG$, a contradiction.
\end{proof}

Since every $k\PG$-module $M$ is relatively $\calP^c_M\rtimes S$-projective (Theorem 2.3), where $S$ is a Sylow $p$-subgroup of $G$, the weak ideal $\QH$ of $\calP^c_M\rtimes G$ in the preceding theorem must satisfy the condition that $H$ is a $p$-subgroup of $G$. We also emphasize that this $\QH$ is weakly convex in $\PG$.

Given the generalized Mackey formula, we propose an explicit algorithm for finding a $\QH$ in the preceding theorem. Suppose $M\in k\PG$-mod is indecomposable. Then according to Dade \cite{Da1} and Boisen \cite{Bo1}, there exists a $p$-subgroup $H'$, minimal up to conjugations in $G$, such that $M$ is relatively $\calP^c_M\rtimes H'$-projective. By the Mackey formula, it is easy to see that $H'$ must be conjugate to $H$ in Theorem 4.19. For simplicity, we assume $H'=H$. Suppose $N$ is an indecomposable $k\calP^c_M\rtimes H$-module (unique up to conjugations by $N_G(\calP^c_M\rtimes H)=N_G(H)$), such that $M\downarrow_{\calP_M^c\rtimes G}\bigm{|} N\uparrow^{\calP^c_M\rtimes G}$. From \cite{Xu1}, we know there exists the smallest ideal $\calD$ of $\calP^c_M\rtimes H$, such that $N\cong N\downarrow_{\calD}\uparrow^{\calP^c_M\rtimes H}$. However, by Lemma ?, $\calD$ must be of the form $\calV\rtimes H$, for some $H$-ideal $\calV$ of $\calP^c_M$. The subcategory $\calV\rtimes H$ is a weak ideal in $\calP^c_M\rtimes G$, and a weakly convex transporter subcategory in $\PG$. (Different choices of $N$ will result in different $\calV'\rtimes H$ which are conjugate to $\calV\rtimes H$, by elements of $N_G(H)$.) We shall prove that $\calV\rtimes H$ meets the requirements in Theorem 4.19.

The above $\calV\rtimes H$, and its conjugates, are actually minimal weakly convex transporter subcategories, relative to which $M$ is projective.

\begin{proposition} Let $M$ be an indecomposable $k\PG$-module. Suppose $\RK$ is a weakly convex transporter subcategory of $\calP^c_M\rtimes G$, relative to which $M$ is projective. Then a conjugate of $\calV\rtimes H$, as in the preceding paragraph, is contained in $\RK$.
\end{proposition}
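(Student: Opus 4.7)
The plan is to reduce WLOG to the case $\calP^c_M=\calP$, so that $\supp^c M=\PG$, and then establish two things: first, that $M$ is itself projective relative to $\calV\rtimes H$; and second, that any weakly convex $\RK$ with $M$ being $\RK$-projective contains a conjugate of $\calV\rtimes H$. For the first assertion, I combine $M\downarrow^{\PG}_{\calP\rtimes G}\bigm| N\uparrow^{\calP\rtimes G}_{\calP\rtimes H}$ (from the construction of $N$) with the defining isomorphism $N\cong N\downarrow_{\calV\rtimes H}\uparrow^{\calP\rtimes H}_{\calV\rtimes H}$ and the transitivity of Kan extensions, to conclude $M\bigm|(N\downarrow_{\calV\rtimes H})\uparrow^{\PG}_{\calV\rtimes H}$.

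For the main assertion, I choose an indecomposable $L\in k\RK$-mod with $M\bigm| L\uparrow^{\PG}_{\RK}$ and apply the Mackey formula (Theorem 4.18) to $L\uparrow^{\PG}_{\RK}\downarrow^{\PG}_{\calP\rtimes H}$; this is valid because $\calP$ is trivially an ideal in itself. The formula yields
$$L\uparrow^{\PG}_{\RK}\downarrow^{\PG}_{\calP\rtimes H}\cong\bigoplus_{g\in[H\backslash G/K]}{}^gL\uparrow^{\calP\rtimes H}_{\calE_g},\qquad\calE_g={}^g\calR\rtimes(H\cap{}^gK).$$
Since $N\bigm| M\downarrow^{\PG}_{\calP\rtimes H}$ (via Corollary 4.3 applied to $N\uparrow^{\PG}_{\calP\rtimes H}$, together with the construction), Krull--Schmidt forces $N\bigm|{}^gL\uparrow^{\calP\rtimes H}_{\calE_g}$ for some $g\in[H\backslash G/K]$. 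Hence $N$ is $\calE_g$-projective inside $\calP\rtimes H$.

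The crux is to upgrade $\calE_g$-projectivity of $N$ to the literal containment $\calV\rtimes H\subset\calE_g$, after possibly replacing $g$ by another representative of its double coset. I apply the Mackey formula a second time, this time inside $\calP\rtimes H$, to $N\downarrow_{\calE_g}\uparrow^{\calP\rtimes H}_{\calE_g}\downarrow^{\calP\rtimes H}_{\calV\rtimes H}$; this is legal because $\calV$ is an ideal of $\calP$. Since $H$ is the full group in $\calP\rtimes H$ there is only one double coset, and the expression collapses to $N\downarrow_{\calF}\uparrow^{\calV\rtimes H}_{\calF}$ with $\calF=(\calV\cap{}^g\calR)\rtimes(H\cap{}^gK)$, so $N\downarrow_{\calV\rtimes H}$ is $\calF$-projective inside $\calV\rtimes H$. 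By Proposition 3.23 one has $\supp^c(N\downarrow_{\calV\rtimes H})=\calV\rtimes H$, so enlarging $\calF$ to the weak ideal $\overbrace{{}_H(\calV\cap{}^g\calR)}\rtimes H$ of $\calV\rtimes H$ and invoking Proposition 4.8 produce an isomorphism $N\downarrow_{\calV\rtimes H}\cong N\downarrow_{\overbrace{{}_H(\calV\cap{}^g\calR)}\rtimes H}\uparrow^{\calV\rtimes H}_{\overbrace{{}_H(\calV\cap{}^g\calR)}\rtimes H}$. Lifting this to the ambient $\calP\rtimes H$ using $N\cong N\downarrow_{\calV\rtimes H}\uparrow^{\calP\rtimes H}_{\calV\rtimes H}$ exhibits $\overbrace{{}_H(\calV\cap{}^g\calR)}\rtimes H$ as an ideal of $\calP\rtimes H$ over which $N$ is Kan-extended, and the minimality of $\calV\rtimes H$ among such ideals forces $\overbrace{{}_H(\calV\cap{}^g\calR)}=\calV$. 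Combined with the Dade--Boisen minimality of $H$ (which yields $H\subset{}^gK$) and a judicious choice of representative inside the double coset $HgK$, one upgrades the closure equality to the literal inclusion $\calV\subset{}^g\calR$, hence $\calV\rtimes H\subset\calE_g$. Conjugating by $g^{-1}$ then yields ${}^{g^{-1}}(\calV\rtimes H)\subset\RK$.

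The main obstacle is this last upgrade, where the weak-ideal-closure equality $\overbrace{{}_H(\calV\cap{}^g\calR)}=\calV$ has to be sharpened to $\calV\subset{}^g\calR$. The Dade--Boisen framework is blind to the poset structure and therefore only delivers the closure version; bridging this gap by exploiting the freedom in choosing a representative inside the double coset is precisely the point at which the sharpened vertex $\calV\rtimes H$ genuinely improves upon Dade's coarser ``vertex'' $\calP^c_M\rtimes H$.
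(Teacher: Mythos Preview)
Your proof has a genuine gap at exactly the point you identify, and the fix you propose does not work. Once you have established $H\subset{}^gK$, the poset ${}^g\calR$ is $H$-stable (since $g^{-1}Hg\subset K$ and $\calR$ is $K$-stable), so for any other representative $g'=hgk$ of the double coset $HgK$ one has ${}^{g'}\calR={}^{hg}\calR={}^g\calR$. There is thus no freedom at all in ``choosing a representative,'' and you are stuck with the closure equality $\overbrace{{}_H(\calV\cap{}^g\calR)}=\calV$, which in general only says that every maximal object of $\calV$ lies in ${}^g\calR$. Since ${}^g\calR$ is merely convex (not an ideal) in $\calP$, this does not force $\calV\subset{}^g\calR$.

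The paper's proof sidesteps this obstacle by reversing the logic: rather than fixing $N$ in advance and then trying to relate its associated $\calV$ to ${}^g\calR$, the paper first applies the Mackey formula to $M$ and the pair $\calP\rtimes H$, $\RK$ to obtain a single $g$ with $H\subset{}^gK$ and $M$ projective relative to ${}^g\calR\rtimes H$. It then \emph{chooses} $N$ to be $L\uparrow^{\calP\rtimes H}_{{}^g\calR\rtimes H}$ for an indecomposable $L$ with $M\mid L\uparrow^{\PG}$. This $N$ is indecomposable (the Kan extension along the full inclusion ${}^g\calR\rtimes H\hookrightarrow\calP\rtimes H$ is fully faithful) and satisfies $M\mid N\uparrow^{\PG}$, so it is a legitimate choice of source in the sense of the paragraph preceding the proposition. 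Because $N$ is now \emph{by construction} extended from ${}^g\calR\rtimes H$, the characterization of $\calV\rtimes H$ from \cite{Xu1} as the minimal ideal with $N\cong N\downarrow\uparrow$ yields $\calV\rtimes H\subset{}^g\calR\rtimes H$ immediately, and conjugating by $g^{-1}$ finishes. The point is that $N$, and hence $\calV$, is only determined up to $N_G(H)$-conjugacy, so one is free to pick the convenient $N$; your argument forfeits this freedom by committing to a fixed $N$ at the outset.
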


\begin{proof} Without loss of generality, we assume $\supp^cM=\PG$. The module $M$ is projective relative to both $\calP\rtimes H$ and $\RK$. By the Mackey formula, there exists $g\in G$ such that $H\subset {}^gK$ and $M$ is projective relative to ${}^g\calR\rtimes H$. Let $L$ be an indecomposable $k{}^g\calR\rtimes H$-module such that $M\bigm{|} L\uparrow^{\PG}_{{}^g\calR\rtimes H}$. The module $N=L\uparrow^{\calP\rtimes H}$ is indecomposable as a $k\calP\rtimes H$-module, satisfying that $M\bigm{|}N\uparrow^{\PG}$. Since the indecomposable $k\PH$-module $N$ is projective relative to ${}^g\calR\rtimes H$, by the construction of $\calV\rtimes H$, we get $\calV\rtimes H\subset{}^g\calR\rtimes H$. It implies that ${}^{g^{-1}}(\calV\rtimes H)\subset\RK$.
\end{proof}

By the minimality, we know that $\calV\rtimes H$, constructed before Proposition 4.20, are conjugate to those $\QH$ in Theorem 4.19.

Now we are ready to introduce vertices and sources for indecomposable modules.

\begin{definition} Let $M$ be an indecomposable $k\PG$-module. Then a minimal weakly convex transporter subcategory $\calV_M\rtimes H\subset\calP^c_M\rtimes G$, relative to which $M$ is projective, is called a \textit{vertex} of $M$. An indecomposable $k\calV_M\rtimes H$-module $L$, such that $M\downarrow_{\calP^c_M\rtimes G}\bigm{|}L\uparrow^{\calP^c_M\rtimes G}_{\calV_M\rtimes H}$, is called a \textit{source} for $M$.
\end{definition}

The source $L$ for $M$ has (convex) support $\calV_M\rtimes H$.

\begin{proposition} If $N$ is an indecomposable $k\RK$-module with vertex $\QH$ and source $U$, then the (convex) support of $N$ must be $\underbrace{{}_K\calQ}\rtimes K$.
\end{proposition}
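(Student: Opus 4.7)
The plan is to combine Theorem 4.19(2), which supplies the two relations $U\bigm|N\downarrow^{\RK}_{\QH}$ and $N\downarrow^{\RK}_{\supp^cN}\bigm|U\uparrow^{\supp^cN}_{\QH}$ (with $U$ having convex support equal to $\QH$), with the explicit induction formula of Theorem 4.2(1). Because $N$ is indecomposable, Proposition 3.23 gives $\supp N=\supp^cN=\calR^c_N\rtimes K$, so identifying the convex support reduces to identifying $\supp N$, and this will be done by establishing two containments.

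From $U\bigm|N\downarrow^{\RK}_{\QH}$ combined with $\supp U=\Ob\calQ$ I obtain $\Ob\calQ\subseteq\calR^c_N$; the $K$-invariance of the transporter subcategory $\supp^cN$ then promotes this to $K\cdot\Ob\calQ\subseteq\calR^c_N$, supplying the ``bottom layer'' of the desired $K$-coideal.

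For the upward closure, at each $x\in\Ob(\supp^cN)$ the formula of Theorem 4.2(1) gives
$$[U\uparrow^{\supp^cN}_{\QH}](x)=\bigoplus_{k_i\in[K/H]}\lim_{({}^{k_i}\calQ)_{\le x}}{}^{k_i}U,$$
and because ${}^{k_i}U$ has full support on ${}^{k_i}\calQ$, an application of Lemma 3.22 on each connected component ensures that the individual summand is nonzero precisely when the indexing overcategory is nonempty. Combined with $\calR^c_N=\supp N\subseteq\supp(U\uparrow^{\supp^cN}_{\QH})$, this forces every $x\in\calR^c_N$ to lie above some ${}^{k_i}\calQ$. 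Together with the bottom-layer containment from the previous paragraph, $\calR^c_N$ is identified with the $K$-coideal generated by $\calQ$, yielding $\supp^cN=\underbrace{{}_K\calQ}\rtimes K$.

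The main obstacle I anticipate is the non-vanishing of $\lim_{({}^{k_i}\calQ)_{\le x}}{}^{k_i}U$ whenever the indexing overcategory is nonempty: connectedness of $({}^{k_i}\calQ)_{\le x}$ is not automatic, so a direct appeal to Lemma 3.22 is not immediate. The resolution is to decompose ${}^{k_i}U$ into its indecomposable $k\calR$-summands, each supported on a single connected component of ${}^{k_i}\calQ$, apply Lemma 3.22 component by component, and then observe that any component of ${}^{k_i}\calQ$ which meets $({}^{k_i}\calQ)_{\le x}$ contributes a nonzero value to the limit, thereby completing the identification of the convex support.
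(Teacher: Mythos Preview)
Your overall strategy coincides with the paper's: from $U\mid N\downarrow_{\QH}$ deduce ${}_K\calQ\subseteq\supp N$, and from $N\downarrow_{\supp^cN}\mid U\uparrow^{\supp^cN}_{\QH}$ together with the induction formula deduce that every object of $\supp N$ lies above some ${}^{k_i}\calQ$; then conclude. The paper's version is terser but follows the same line. However, your appeal to Lemma~3.22 to show that $\lim_{({}^{k_i}\calQ)_{\le x}}{}^{k_i}U\ne 0$ whenever the indexing poset is nonempty is both unnecessary and incorrect. It is unnecessary because for the inclusion $\calR^c_N\subseteq\underbrace{{}_K\calQ}$ you only need the trivial implication ``empty overcategory $\Rightarrow$ zero limit''. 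It is incorrect because Lemma~3.22 concerns pointwise nonvanishing of an indecomposable poset module on its convex support, not nonvanishing of a limit over a subposet; and such limits can vanish. For instance, take $\calQ$ with four objects $a,b,c,d$ and relations $a<c$, $a<d$, $b<c$, $b<d$, and let $U$ be one-dimensional at every object with $U(b\to d)$ a scalar $\lambda\ne 1$ and all other structure maps the identity: then $U$ is indecomposable with full support on $\calQ$, yet $\lim_\calQ U=0$.

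More seriously, your final step---passing from the two containments ${}_K\calQ\subseteq\calR^c_N\subseteq\underbrace{{}_K\calQ}$ to the equality $\calR^c_N=\underbrace{{}_K\calQ}$---is a genuine gap and cannot be filled as the statement stands: $\calR^c_N$ is convex in $\calR$ but need not be upward closed, so the two inclusions do not force equality. The paper's own Example~4.25 already illustrates this: with $\calR=\calP=\{x\to y\to z\}$ and $K=1$, the indecomposable module $k\to k\to 0$ has vertex $\{x\}$ but support $\{x,y\}\subsetneq\{x,y,z\}=\underbrace{\{x\}}$. The paper's proof contains the same unjustified leap (``and thus always non-zero on every object of $\underbrace{{}_K\calQ}\rtimes K$''); presumably an additional hypothesis such as $\supp^cN=\RK$, or reading $\underbrace{{}_K\calQ}$ inside $\supp^cN$ rather than inside $\calR$, is intended.
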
 

\begin{proof} In fact, by direct calculations, $U\uparrow^{\RK}_{\QH}$, and hence $N$ can only possibly takes non-zero values on $\underbrace{{}_K\calQ}\rtimes K$ (state this fact in an earlier subsection). From $U\bigm{|} N\downarrow^{\RK}_{\QH}$, we find $N\downarrow^{\RK}_{\QH}$ is non-zero on every object of $\QH$. It implies $N$ is non-zero on every object of ${}_K\calQ\rtimes K$, and thus always non-zero on every object of $\underbrace{{}_K\calQ}\rtimes K$.
\end{proof}

The upcoming result demonstrate connections between our constructions and the classical ones.

\begin{proposition} Let $\calP$ be a connected $G$-poset. Let $M$ be an indecomposable $kG$-module with a vertex $Q$. Suppose $\kappa_M$ is the restriction of $M$ along $\PG\to G$. Then $\kappa_M$ is an indecomposable $k\PG$-module with a vertex $\calP\rtimes Q$.
\end{proposition}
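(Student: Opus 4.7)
The argument proceeds in three stages: indecomposability of $\kappa_M$, verification that $\kappa_M$ is relatively $\calP\rtimes Q$-projective, and minimality of $\calP\rtimes Q$ in the sense of Proposition 4.20 / Theorem 4.19.

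\textit{Indecomposability.} I would compute $\End_{k\PG}(\kappa_M)$ directly. A natural transformation $\phi:\kappa_M\to\kappa_M$ is a family of $k$-maps $\phi_x:M\to M$, one per $x\in\Ob\calP$. Because $\kappa_M$ sends every $\alpha\in\Mor\calP$ to the identity of $M$, naturality forces $\phi_{t(\alpha)}=\phi_{s(\alpha)}$; connectedness of $\calP$ then yields a single $\phi:M\to M$ independent of $x$. Naturality along the remaining morphisms $1_{{}^gx}g:x\to{}^gx$ reads $\phi g=g\phi$, so $\phi\in\End_{kG}(M)$. Hence $\End_{k\PG}(\kappa_M)\cong\End_{kG}(M)$ is local and $\kappa_M$ is indecomposable. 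Note also that $\kappa_M(x)=M\neq 0$ for every $x$, so $\supp^c\kappa_M=\PG$ by Proposition 3.23.

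\textit{Relative projectivity.} Since $M$ has vertex $Q$, there is an indecomposable $kQ$-module $L$ (a source) with $M\bigm|L\uparrow^G_Q$. The restriction functor $\kappa$ is exact, so $\kappa_M\bigm|\kappa_{L\uparrow^G_Q}$, and the key step is the natural isomorphism
$$\kappa_{L\uparrow^G_Q}\;\cong\;\kappa_L\uparrow^{\PG}_{\calP\rtimes Q}.$$
By Theorem 4.2(1), for $x\in\Ob\PG$,
$$\bigl[\kappa_L\uparrow^{\PG}_{\calP\rtimes Q}\bigr](x)=\bigoplus_{g_i\in[G/Q]}\lim_{({}^{g_i}\calP)_{\le x}}\kappa_L\;\cong\;\bigoplus_{g_i\in[G/Q]}L,$$
since ${}^{g_i}\calP=\calP$, the poset $\calP_{\le x}$ has $x$ as terminal object, and $\kappa_L$ is the constant functor with value $L$ there. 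This matches $\kappa_{L\uparrow^G_Q}(x)=L\uparrow^G_Q$ as a $k$-space, and comparing the transition maps given by Theorem 4.2(1)—using that $\kappa$ trivializes the $\Mor\calP$ component of every morphism in $\PG$—shows the two $k\PG$-module structures coincide. Therefore $\kappa_M\bigm|\kappa_L\uparrow^{\PG}_{\calP\rtimes Q}$ and $\kappa_M$ is relatively $\calP\rtimes Q$-projective.

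\textit{Minimality and main obstacle.} By Theorem 4.19 the vertex is unique up to $\PG$-conjugacy, so it suffices to show that $\calP\rtimes Q$ realises the minimum produced by Proposition 4.20. Suppose $\kappa_M$ were relatively $\calP\rtimes H$-projective for some $p$-subgroup $H\subset G$; one seeks to translate this into relative $H$-projectivity of $M$ in $kG$-mod, whereupon the minimality of $Q$ as vertex of $M$ forces $H\sim Q$ in $G$, and combining with the previous step yields the claim. This translation is the main obstacle. The cleanest route is through the right adjoint of the restriction $\kappa$ along $\pi:\PG\to G$; alternatively, one can further restrict to a stabiliser subcategory $\{x_0\}\times G_{x_0}$ and apply the Mackey formula of Theorem 4.18, using connectedness of $\calP$ to reassemble the full $G$-action from the individual stabilisers. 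The delicate point is ruling out that the extra poset direction in $\PG$ allows $\kappa_M$ to be relatively projective over a genuinely smaller $p$-subgroup than the one $Q$ permits for $M$ itself; connectedness of $\calP$ is essential here.
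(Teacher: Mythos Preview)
Your indecomposability and relative-projectivity arguments are correct and more explicit than the paper's. The paper compresses both into one line: it notes $LK_{\pi}\kappa_M=M$ (the \emph{left} Kan extension along $\pi$, not the right adjoint you mention---though since $\Res_\pi$ is fully faithful both adjoints are one-sided inverses, so your choice works too), deduces indecomposability, and then simply asserts the compatibilities $\uparrow^{\PG}_{\calP\rtimes H}\cong\uparrow^G_H$, $\downarrow^{\PG}_{\calP\rtimes H}\cong\downarrow^G_H$ on constant modules. Your explicit calculation $\kappa_L\uparrow^{\PG}_{\calP\rtimes Q}\cong\kappa_{L\uparrow^G_Q}$ is precisely a verification of the first of these.

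Your caution about minimality is well placed, and in fact you are more careful here than the paper. The paper's proof only shows that $Q$ is minimal among subgroups $H$ with $\kappa_M$ relatively $\calP\rtimes H$-projective; it never excludes a strictly smaller weakly convex $\calV\rtimes Q$ with $\calV\subsetneq\calP$. This gap is genuine and cannot be closed: take $\calP=\{x\to y\}$ with trivial $G$-action. For any $L\in kQ$-mod one checks via Theorem 4.2(1) that $\kappa_L\downarrow_{\{x\}\times Q}\uparrow^{\calP\times Q}\cong\kappa_L$ (since $(\{x\})_{\le y}=\{x\}$ is a single point), so $\kappa_M$ is relatively $\{x\}\times Q$-projective and its vertex is $\{x\}\times Q$, not $\calP\times Q$. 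The paper's own Example 4.25 says exactly this for $G=1$: the trivial module $\k=\kappa_k$ on $x\to y\to z$ has vertex $\{x\}$, not $\calP$. So what the argument (yours and the paper's) actually establishes is that the group part of the vertex of $\kappa_M$ is $Q$; the poset part is the minimal ideal attached to the Dade source $\kappa_L$ in the algorithm following Theorem 4.19, and that is generally a proper subposet of $\calP$. The ``delicate point'' you flag is therefore not an obstacle to be overcome but a counterexample to the stated conclusion.
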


\begin{proof} Since $LK_{\pi}\kappa_M=M$, $\kappa_M$ is indecomposable if and only if $M$ is. Meanwhile the convex support of $\kappa_M$ is the whole category $\PG$. Thus $\uparrow^{\PG}_{\calP\rtimes H}\cong \uparrow^G_H$ and $\downarrow^{\PG}_{\calP\rtimes H}\cong \downarrow^G_H$, for every subgroup $H\subset G$. It follows that $\calP\rtimes Q$ is a vertex of $\kappa_M$.
\end{proof}

For any finite group $G$, $\calS_p^1$ (the poset of all $p$-subgroups) is contractible (hence connected). If $G$ has a non-trivial normal $p$-subgroup, then $\calS_p$ is contractible. If $G$ is finite Chevalley group of characteristic $p$ and rank $\ge 2$, then $\calS_p$ is connected.

The vertices of an indecomposable module $M$ are conjugate by elements of $G$. While given a vertex $\calV_M\rtimes H$, the sources are unique up to conjugation by elements of $N_G(\calV_M\rtimes H)$. It is important to know that the convex supports of sources are exactly vertices (not proper subcategories). It means that our parametrization of indecomposable modules via their convex supports makes sense.

\begin{example} Based on Example 4.13, we see that the simple $k\PG$-module $S_{x,k}$ has $\{x\}\times T_x$ as a vertex, where $T_x$ is a Sylow $p$-subgroup of $G_x$. Meanwhile its projective cover $P_{x,k}$ has $\{x\}\times 1$ as a vertex. More generally, we can deduce that $P_{x,V}$ has $\{x\}\times 1$ as a vertex, and $S_{x,V}$ has $\{x\}\times H_x$ as a vertex, where $H_x$ is a vertex (in the classical sense) of the simple $kG_x$-module $V$.

Let us provide one more concrete example. If $H$ is a subgroup of $G$ and set $\calP=G/H=\{g_1H=H,\cdots,g_nH\}$, then the vertices of $\k$ are identified with the classical vertices of the $kH$-module $k$, through the category equivalence between $\calP\rtimes G$ and $H$. In fact, fixing a Sylow $p$-subgroup $S\subset G$, there is some $g_i$ such that $S^{g_i}\cap H$ becomes a Sylow $p$-subgroup of $H$. Then $\{H\}\times(S^{g_i}\cap H)$ is a vertex of $\k\in k\PG$-mod. 

Note that, under Dade's construction, a ``vertex'' of $\k\in k\PG$-mod would be $\calP\rtimes (S^{g_i}\cap H)$, which contains $\{H\}\times (S^{g_i}\cap H)$ as a proper subcategory. In fact, Dade asserted that every indecomposable $k\PG$-module is projective relative to $\calP\rtimes S^{g_i}$. The connection between his approach and ours is established as follows. The (po)set $\calP$ is a disjoint union of $S^{g_i}$-orbits, and we denote by $\Gamma=\{\calP_t\}_t$ the set of all these $S^{g_i}$-orbits. They are convex subposets of $\calP$, such that $\calP\rtimes S^{g_i}=\coprod_{\Gamma}\calP_t\rtimes S^{g_i}$ is a disjoint union of connected components, which are groupoids. Suppose $\calP_1=\calO_{S^{g_i}}(H)$. Then the skeleton of $\calP_1\rtimes S^{g_i}$ is exactly $\{H\}\times(S^{g_i}\cap H)$.
\end{example}

Next, we shall provide a generalized Green correspondence for modules. Based on our new definition of the vertex, it is necessary to note that for posets, one should expect something different from the Green correspondence for group modules. 

\begin{example} We may consider $\calP : x \to y \to z$ and the subposets $\calQ=\{x\}$ and $\calR : x\to y$. The $k\calP$-modules $k\to 0\to 0$, $k\to k\to 0$ and $k\to k\to k$ all have $\calQ$ as a vertex. In fact, they are all indecomposable $k\calP$-modules with this property. If we consider $k\calR$-modules, then there are two indecomposable modules with vertex $\calQ$. Thus there does not exist a 1-1 correspondence between the set of isomorphism classes of indecomposable $k\calR$-modules, with vertex $\calQ$, and that of isomorphism classes of indecomposable $k\calP$-modules, with vertex $\calQ$. 

However, we notice that either of the three modules $k\to 0\to 0$, $k\to k\to 0$ and $k\to k\to k$ determines the other two via the restriction or the left Kan extension. Thus if we consider the isomorphism classes of indecomposable modules with vertex $\calQ$, of ``maximal support'', then there is a 1-1 correspondence (between $k\to k\to 0$ and $k\to k\to k$). Moreover, the way they determine each other is clear.
\end{example}

We shall bear in mind that for a transporter category $\QH$ and an indecomposable $\QH$-module $M$, its support and convex support are identical. Moreover, indecomposable $k\PG$-modules are stratified by their supports. It helps us to formulate a generalized Green correspondence.

\begin{theorem}[The Green correspondence for transporter categories] Let $\PG$ be a connected transporter category. Suppose $\QH$ is a connected $p$-weakly ideal transporter subcategory of $\PG$. Let $\RK$ be a connected weakly convex transporter subcategory containing $\calQ\rtimes N_G(\QH)$. Then there is a one-to-one correspondence between the set of isomorphism classes of indecomposable $k\PG$-modules with vertex $\QH$ and convex support $\underbrace{{}_G\calQ}\rtimes G$, and the set of isomorphism classes of indecomposable $k\RK$-modules with vertex $\QH$ and convex support $\underbrace{{}_K\calQ}\rtimes K$.
\end{theorem}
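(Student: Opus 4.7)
The proof strategy mirrors the classical Green correspondence for group algebras, with the Mackey formula (Theorem 4.18) as the key technical tool. The plan is to construct, for an indecomposable $k\RK$-module $N$ with vertex $\QH$, an indecomposable $k\PG$-module $f(N)$ as the unique summand of $N\uparrow^{\PG}_{\RK}$ with vertex $\QH$, and dually, for an indecomposable $k\PG$-module $M$ with vertex $\QH$, to take $g(M)$ to be the unique summand of $M\downarrow^{\PG}_{\RK}$ with vertex $\QH$. The convex supports $\underbrace{{}_G\calQ}\rtimes G$ and $\underbrace{{}_K\calQ}\rtimes K$ are then automatic by Proposition 4.22.

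First I would fix a source $U\in k\QH$-mod of $N$ (Definition 4.21, Theorem 4.19), so that $N\bigm{|} U\uparrow^{\RK}_{\QH}$ and hence $N\uparrow^{\PG}_{\RK}\bigm{|} U\uparrow^{\PG}_{\QH}$. Next I would apply the Mackey formula to $U\uparrow^{\PG}_{\QH}\downarrow^{\PG}_{\RK}$, after first restricting to $\supp^c(U\uparrow^{\PG}_{\QH})=\underbrace{{}_G\calQ}\rtimes G$, where the hypothesis that $\calQ$ is an $H$-ideal in $\calP$ provides the required support-containment. From $\calQ\rtimes N_G(\QH)\subseteq\RK$ one extracts $\calQ\subseteq\calR$ and $N_G(\QH)\subseteq K$ (so in particular $H\subseteq K$); consequently the double coset of $g=1$ contributes exactly $U\uparrow^{\RK}_{\QH}$, which contains $N$ as a summand.

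The crucial lemma is: for every representative $g\notin K$, no indecomposable summand of the Mackey term
$$[{}^g(U\downarrow^{\QH}_{(\calR^g\cap\calQ)\rtimes(K^g\cap H)})]\uparrow^{\RK}_{(\calR\cap{}^g\calQ)\rtimes(K\cap{}^gH)}$$
has vertex $\RK$-conjugate to $\QH$. Such a vertex $\calW$ is contained in $(\calR\cap{}^g\calQ)\rtimes(K\cap{}^gH)\subseteq{}^g(\QH)$; if $\calW={}^k(\QH)$ for some $k\in K$, then comparing object-set sizes in ${}^k(\QH)\subseteq{}^g(\QH)$ forces ${}^k(\QH)={}^g(\QH)$, whence $k^{-1}g\in N_G(\QH)\subseteq K$ and $g\in K$, a contradiction. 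It follows that every indecomposable summand of $U\uparrow^{\PG}_{\QH}\downarrow^{\PG}_{\RK}$ whose vertex is $\RK$-conjugate to $\QH$ sits inside the $g=1$ block $U\uparrow^{\RK}_{\QH}$. Combined with Theorem 4.19 applied in $\RK$, this gives existence and uniqueness of both $f(N)$ and $g(M)$; the identities $g\circ f=\Id$ and $f\circ g=\Id$ follow because $N$, $f(N)$ and $g(f(N))$ all share the same source $U$ up to $N_G(\QH)$-conjugation, and Theorem 4.19 detects an indecomposable module by its vertex and source.

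The principal obstacle is twofold. First, Theorem 4.18 is stated in its convenient form under the assumption that $\calR\subseteq\calP$ is an ideal, whereas here $\calR$ is only convex; the fix is to pass to $\underbrace{{}_G\calQ}\rtimes G$, where the $H$-ideal property of $\calQ$ supplies the missing ideal condition on the restricted ambient category. Second, Theorem 4.19 pins down sources only up to $N_G(\QH)$-conjugation, so one must check that any two $N_G(\QH)$-conjugate sources of $M$ induce $\RK$-isomorphic summands; this is precisely where the hypothesis $N_G(\QH)\subseteq K$ is used a second time, absorbing the conjugation ambiguity inside $\RK$.
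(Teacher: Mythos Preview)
Your overall strategy---use the Mackey formula on $U\uparrow^{\PG}_{\QH}\downarrow^{\PG}_{\RK}$, isolate the identity double coset, and show the remaining terms have strictly smaller vertices---is exactly the paper's. Your cardinality argument for the ``crucial lemma'' is a clean variant of the paper's $(\QH)\cap{}^g(\QH)$ argument and works fine.

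The gap is in your handling of the Mackey hypothesis. Theorem~4.18 requires ${}^g(\underbrace{\calQ_U})_{\le x}\subset\calR_{\le x}$ for all $g\in G$ and $x\in\Ob\calR$; this is a condition on $\calR$, not on $\calQ$, and the fact that $\calQ$ is an $H$-ideal in $\calP$ does not supply it. Your fix---restricting the ambient category to $\underbrace{{}_G\calQ}\rtimes G$---fails because $\calR\cap\underbrace{{}_G\calQ}$ need not be an ideal in $\underbrace{{}_G\calQ}$. Concretely, take $G=\ZZ/2$ acting on $\calP=\{a,a',c\}$ with $a<c$, $a'<c$, the generator swapping $a,a'$; set $H=1$, $\calQ=\{a\}$ (an ideal), $K=1$, $\calR=\{a,c\}$ (convex, contains $\calQ\rtimes N_G(\QH)=\{a\}\times 1$). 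Then $\underbrace{{}_G\calQ}=\calP$, and for $x=c$, $g$ the swap, $({}^g\underbrace{\calQ})_{\le c}=\{a',c\}\not\subset\calR_{\le c}=\{a,c\}$. So neither the boxed Mackey formula nor the general Theorem~4.18 applies to your restricted setup.

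The paper repairs this differently: it first proves the correspondence assuming $\RK$ is a weak ideal of $\PG$ (so $\calR$ is an ideal in $\calP$ and the boxed Mackey formula applies directly), and then for general weakly convex $\RK$ it \emph{enlarges} $\calR$ to the $K$-ideal $\overbrace{{}_K\calR}$, applies the weak-ideal case to $\overbrace{{}_K\calR}\rtimes K$, and finally observes that every $U\uparrow^{\overbrace{{}_K\calR}\rtimes K}_{\QH}$ already has support inside $\RK$ (since $\calQ\subset\calR$ and $\calR$ is a $K$-subposet), so the correspondence descends. Replacing your restriction step with this enlargement step would close the gap.
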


\begin{proof} At first, we assume $\RK$ is a connected weak ideal of $\PG$. On the one hand, let $N$ be an indecomposable $k\RK$-module with vertex $\QH$ and convex support $\RK$. We construct an indecomposable $k\PG$-module $L$ with convex support $\PG$ and vertex $\QH$. To this end, we show $N\uparrow^{\PG}_{\RK}$ has a unique summand with vertex $\QH$, while other summands are projective relative to transporter subcategories of the form $(\RK)\cap{}^g(\QH)$ for some $g\not\in K$. Suppose $U$ is a source for $N$. Then $U\uparrow^{\RK}_{\QH}\cong N\oplus V$ for some $k\RK$-module $V$. Put $N\uparrow^{\PG}_{\RK}\downarrow^{\PG}_{\RK}\cong N\oplus N'$ and $V\uparrow^{\PG}_{\RK}\downarrow^{\PG}_{\RK}\cong V\oplus V'$. Then by the Mackey formula
$$
U\uparrow^{\PG}_{\QH}\downarrow^{\PG}_{\RK}=\bigoplus_{g\in[K\backslash G/H]}\{{}^g[U\downarrow^{\QH}_{(\calR^g\cap\calQ)\rtimes(K^g\cap H)}]\}\uparrow^{\RK}_{(\calR\cap{}^g\calQ)\rtimes(K\cap{}^gH)},
$$
which is also isomorphic to $N\oplus N'\oplus V\oplus V'$. There exists a $g\in K$ and its corresponding summand is $U\uparrow^{\RK}_{\QH}\cong N\oplus V$. The summands of $N'$ and $V'$ are all projective relative to transporter subcategories of the form $(\calR\cap{}^g\calQ)\rtimes(K\cap{}^gH)$ for $g\not\in K$. 

Next we show $N\uparrow^{\PG}_{\RK}$ has a unique summand with vertex $\QH$, and the other summands have vertices contained in some $(\QH)\cap{}^g(\QH)$ for $g\not\in K$. Let $L$ be an indecomposable summand of $N\uparrow^{\PG}_{\RK}$, which on restriction to $k\RK$ has $N$ as a summand. It must have $\QH$ as its vertex. (It is projective relative to $\QH$ because $N$ is. However, its vertex cannot be conjugate to a proper weakly convex transporter subcategory of $\QH$, since otherwise $N$ would be projective relative to this weakly convex transporter subcategory of $\QH$.) We want to prove that $L$ is the unique summand having $\QH$ as a vertex. Let $L'$ be another summand of $N\uparrow^{\PG}_{\RK}$. Then $L'\downarrow^{\PG}_{\RK}$ must be a direct summand of $N'$, which is projective relative to transporter subcategories of the form $(\calR\cap{}^g\calQ)\rtimes(K\cap{}^gH)$ for $g\not\in K$. Since $L'$ is a summand of $U\uparrow^{\PG}_{\QH}$, $L'$ is projective relative to $\QH$. Thus $L'$ has a vertex $\calQ'\rtimes H'$ contained in $\QH$. Since $\calQ'\rtimes H'\subset\RK$, $L'\downarrow^{\PG}_{\RK}$ has a summand which on restriction to $\calQ'\rtimes H'$ has a summand as a source for $L'$. It follows that there is a ${}^t(\calQ'\rtimes H')$, some $t\in K$, contained in one of the transporter subcategories, say $(\calR\cap{}^s\calQ)\rtimes(K\cap{}^sH)$ for $s\not\in K$. Thus $\calQ'\rtimes H'\subset{}^g(\QH)$ for $g=t^{-1}s\not\in K$ (which is not in $N_G(\QH)\subset K$). It means that $\calQ'\rtimes H'\subset(\QH)\cap{}^g(\QH)\varsubsetneq\QH$.

By Proposition 4.22, $L$ is supported on $\underbrace{{}\calQ_G}\rtimes G$.

On the other hand, assume $M$ is an indecomposable $k\PG$-module with vertex $\QH$, support $\underbrace{{}_G\calQ}\rtimes G$ and a source $S\in k\QH$-mod. We construct an indecomposable $k\RK$-module $W$ with vertex $\QH$ and support $\underbrace{{}_K\calQ}\rtimes K$. Since $M$ is a summand of $S\uparrow^{\RK}_{\QH}\uparrow^{\PG}_{\RK}$, there is a summand $W\bigm{|}S\uparrow^{\RK}_{\QH}$ such that $M\bigm{|}W\uparrow^{\PG}_{\RK}$. The $k\RK$-module $W$ must have vertex $\QH$, because it it projective relative to it, and moreover cannot be projective relative to a smaller transporter subcategory. The module $M\downarrow^{\PG}_{\RK}$ is a summand of $W\uparrow^{\PG}_{\RK}\downarrow^{\PG}_{\RK}$. By our preceding arguments, it must have only one summand with vertex $\QH$, that is, $W$.  This indecomposable module $W$, by Lemma 4.25, has $\underbrace{{}_K\calQ}\rtimes K$ as its support.

We readily verify that our previous constructions produce a 1-1 correspondence.

In the general situation, given a weakly convex $\RK$, we may produce a weak ideal $\overbrace{{}_K\calR}\rtimes K$. Since it contains $\calQ\rtimes N_G(\QH)$, by the above discussions, there is a one-to-one correspondence between the isomorphism classes of indecomposable $k\PG$-modules with vertex $\QH$ and support $\underbrace{{}_G\calQ}\rtimes G$, and the isomorphism classes of indecomposable $k\overbrace{{}_K\calR}\rtimes K$-modules with vertex $\QH$ and support $\underbrace{{}_K\calQ}\rtimes K$. However, the Kan extension $U\uparrow^{\tiny{\overbrace{{}_K \calR}}\rtimes K}_{\QH}$ of any $k\QH$-module $U$ has its support contained in $\RK$, because $\calQ$ is inside a $K$-subposet $\calR$. It implies that our correspondence is truly a correspondence between the isomorphism classes of indecomposable $k\PG$-modules with vertex $\QH$ and support $\underbrace{{}_G\calQ}\rtimes G$, and  the isomorphism classes of indecomposable $k\RK$-modules with vertex $\QH$ and support $\underbrace{{}_K\calQ}\rtimes K$.
\end{proof}

There are two special cases that we may apply the Green correspondence. One is $\calP\rtimes H \subset\PG$, for suitable $H$, and the other is $\QH\subset\calP\rtimes H$, for $\calQ\subset\calP$. These are already given by \cite{Da1} and \cite{Xu1}, respectively. In light of Proposition 4.20, we may compose maps in these special Green correspondences and obtain a result similar to the above. However, if we were to use these procedures directly, we would have to ask $N_G(H)\subset K$, instead of the slightly weaker condition $N_G(\QH)\subset K$.

The following result establishes a clear connection between category representations and group representations.

\begin{corollary} Let $\PG$ be a transporter category and $x$ be an object. Let $H\subset G_x$ be a $p$-subgroup. Suppose $K\supset N_{G_x}(H)$. Then there is a one-to-one correspondence between the set of isomorphism classes of indecomposable $k\PG$-modules with vertex $\{x\}\times H$ and support ${}_G{x}\rtimes G$, and the set of isomorphism classes of indecomposable $k\{x\}\times K$-modules with vertex $\{x\}\times H$ (and support $\{x\}\times K$).
\end{corollary}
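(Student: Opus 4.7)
The plan is to derive this as an application of the Green correspondence (Theorem 4.26), after a short reduction. The obstruction to a direct application is that $\{x\}\times H$ is not in general a weak ideal of $\PG$, since $\{x\}$ need not be an $H$-ideal of $\calP$.

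By Proposition 4.22, any indecomposable $k\PG$-module with vertex $\{x\}\times H$ has convex support exactly $\underbrace{{}_G\{x\}}\rtimes G$. Set $\calP':=\underbrace{{}_G\{x\}}$, a $G$-coideal of $\calP$; then $\calP'\rtimes G$ is a coideal (hence convex) subcategory of $\PG$. As discussed just before Definition 4.7, the $k\PG$-modules with convex support inside $\calP'\rtimes G$ correspond bijectively to $k\calP'\rtimes G$-modules with convex support the whole of $\calP'\rtimes G$, and since the vertex (Definitions 4.7 and 4.21) is defined after restricting to the convex support, it is preserved under this identification. Hence the first set of the corollary is in bijection with the isomorphism classes of indecomposable $k\calP'\rtimes G$-modules having vertex $\{x\}\times H$ and convex support equal to $\calP'\rtimes G$.

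Now I verify the hypotheses of Theorem 4.26 for the triple $(\calP'\rtimes G,\ \{x\}\times H,\ \{x\}\times K)$. The minimal objects of $\calP'=\underbrace{{}_G\{x\}}$ are precisely the elements of $G.x$ (by the description of $\underbrace{{}_G\{x\}}$ given after Definition 3.7), so $\{x\}$ is an $H$-ideal of $\calP'$ for every $H\subset G_x$, and $\{x\}\times H$ becomes a connected $p$-weak ideal of $\calP'\rtimes G$. A direct check using the definitions gives $N_G(\{x\}\times H)=G_x\cap N_G(H)=N_{G_x}(H)\subset K$, so $\calQ\rtimes N_G(\QH)=\{x\}\times N_{G_x}(H)\subset\{x\}\times K$; clearly $\{x\}\times K$ is a connected weakly convex transporter subcategory of $\calP'\rtimes G$ as $K\subset G_x$. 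Theorem 4.26 then yields the bijection with indecomposable $k\{x\}\times K$-modules having vertex $\{x\}\times H$ and support $\underbrace{{}_K\{x\}}\rtimes K$, which, being a $K$-coideal generated by $\{x\}$ inside the $K$-subposet $\{x\}$, is just $\{x\}\times K$.

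The main bookkeeping step is the identification of $k\PG$-modules with convex support $\calP'\rtimes G$ with $k\calP'\rtimes G$-modules of full convex support, together with the preservation of vertex under this identification. This is where the whole formalism of convex support and the modified definition of vertex (Definition 4.7) pays off, and should go through routinely; once it is in place, the corollary is nothing more than Theorem 4.26 applied in $\calP'\rtimes G$.
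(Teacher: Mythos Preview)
Your proof is correct, and since the paper states this corollary without proof (treating it as an immediate consequence of Theorem 4.26), your argument supplies exactly the details the paper leaves implicit. In particular, you correctly observe that Theorem 4.26 does not apply directly in $\PG$ because $\{x\}$ need not be an $H$-ideal of $\calP$; your reduction to $\calP'=\underbrace{{}_G\{x\}}$, where $\{x\}$ becomes minimal and hence an $H$-ideal, is the right fix, and the identification of vertices under this passage follows from the way Definition 4.7 restricts to the convex support. Your computation $N_G(\{x\}\times H)=G_x\cap N_G(H)=N_{G_x}(H)$ matches the remark following Definition 3.10, and the final identification $\underbrace{{}_K\{x\}}\rtimes K=\{x\}\times K$ (taken inside $\calR=\{x\}$) is correct since $K\subset G_x$.
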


\section{Block Theory}

Boisen \cite{Bo1} studied the block theory of fully group-graded algebras. In particular, based on Dade's work, he defined ``defect groups'' of blocks, and established a generalized Brauer's First Main Theorem. These constructions and results certainly are valid for transporter category algebras. However, Boisen's ``defects'' are not truly subgroups, and they are too big in the case of transporter category algebras. We shall improve a few of the existing results, and then propose some entirely new constructions and theorems. Especially, for a transporter category algebra, we can talk about defect transporter categories of its blocks. Since there is a trivial representation $\k$, we also have a notion of the \textit{principal block} of a transporter category algebra.

\subsection{Defect transporter categories} As we mentioned in Section 2, Boisen used a subalgebra, $\Delta(A)\subset A^e$, to introduce the ``defect groups'' of a block of a fully group-graded algebra $A$. His main observation is the module isomorphism $A_1\uparrow^{A^e}_{\Delta(A)}\cong A$. When $A=k\PG$, $A_1\cong k\calP$ and we have seen that $\Delta(k\PG)\cong k\calP^e\rtimes\delta(G)$. Boisen's ``defect groups'' would be minimal $p$-subgroups $D\subset G$ such that $k\PG$ is projective relative to $k\calP^e\rtimes\delta(D)$. We shall observe that Boisen's isomorphism $k\calP\uparrow^{\calP^e\rtimes  G^e}_{\calP^e\rtimes\delta(G)}\cong k\PG$ comes from another isomorphism $\k\uparrow^{\calP^e\rtimes G^e}_{F(\calP)\rtimes\delta(G)}\cong k\calP$. This prompts us to introduce the defect transporter subcategories of a block of $k\PG$ as subcategories of $F(\calP)\rtimes\delta(G)$. We shall explain the ideas now.

In Section 3, we constructed the following transporter categories and faithful functors
$$
F(\calP)\rtimes\delta(G) \longrightarrow \calP^e\rtimes\delta(G) \longrightarrow \calP^e\rtimes G^e\cong (\PG)^e.
$$
Passing to module categories, the above functors give rise to restrictions
$$
kF(\calP)\rtimes\delta(G)\mbox{-mod} {\buildrel{\downarrow^{\calP^e\rtimes\delta(G)}_{F(\calP)\rtimes\delta(G)}}\over{\longleftarrow}} k\calP^e\rtimes\delta(G)\mbox{-mod} {\buildrel{\downarrow^{\calP^e\rtimes G^e}_{\calP^e\rtimes\delta(G)}}\over{\longleftarrow}} k\calP^e\rtimes G^e\mbox{-mod},
$$
and their left adjoints 
$$
kF(\calP)\rtimes\delta(G)\mbox{-mod} {\buildrel{\uparrow^{\calP^e\rtimes\delta(G)}_{F(\calP)\rtimes\delta(G)}}\over{\longrightarrow}} k\calP^e\rtimes\delta(G)\mbox{-mod} {\buildrel{\uparrow^{\calP^e\rtimes G^e}_{\calP^e\rtimes\delta(G)}}\over{\longrightarrow}} k\calP^e\rtimes G^e\mbox{-mod},
$$
Subsequently, we would like to demonstrate that $F(\calP)\rtimes\delta(G)\subset\calP^e\rtimes G^e$ plays the role of the diagonal subgroup in the block theory of group algebras. Since $F(\calP)\rtimes\delta(G)$ is a coideal, hence convex, in $\calP^e\rtimes\delta(G)$, every $kF(\calP)\rtimes\delta(G)$-module is naturally a $k\calP^e\rtimes\delta(G)$-module. It means that 
$$
\uparrow^{\calP^e\rtimes\delta(G)}_{F(\calP)\rtimes\delta(G)} : kF(\calP)\rtimes\delta(G)\mbox{-mod} \to k\calP^e\rtimes\delta(G)\mbox{-mod}
$$
is just an embedding.

In what follows, we shall regard $F(\calP)\rtimes\delta(G)$ as a weakly convex transporter subcategory of $(\PG)^e\cong\calP^e\rtimes G^e$. Consider the $(k\PG)^e$-module $k\PG$. Then $\supp^c(k\PG)=\calP^e_{k\PG}\rtimes G^e$ consists of objects $\{(y,x^{op})\bigm{|}\Hom_{\PG}(x,y)\ne\emptyset\}$. Note that $F(\calP)$, identified with a subposet of $\calP^e$, is convex but not ideal in $\calP^e_{k\PG}$.

\begin{lemma} Let $\k\in kF(\calP)\rtimes\delta(G)$-mod. Then $\k\uparrow^{\calP^e\rtimes\delta(G)}_{F(\calP)\rtimes\delta(G)}\cong k\calP$ as $k\calP^e\rtimes\delta(G)$-modules. Consequently $\k\uparrow^{(\PG)^e}_{F(\calP)\rtimes\delta(G)}\cong k\PG$ as $(k\PG)^e$-modules.
\end{lemma}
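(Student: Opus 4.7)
\medskip

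\noindent\textbf{Proof proposal.} My plan is to prove the first isomorphism by a direct pointwise computation of the left Kan extension using Theorem 4.2(3), and then deduce the second by transitivity together with Boisen's fully group-graded isomorphism $A\cong A_1\uparrow^{A^e}_{\Delta(A)}$.

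First I would apply Theorem 4.2(3) to the inclusion $F(\calP)\rtimes\delta(G)\hookrightarrow\calP^e\rtimes\delta(G)$, which is an inclusion of transporter categories sharing the common group $\delta(G)$ and with $F(\calP)$ a $\delta(G)$-subposet of $\calP^e$ (Lemma 3.16). Thus for every $(y,x^{op})\in\Ob(\calP^e\rtimes\delta(G))$,
$$
\bigl[\k\uparrow^{\calP^e\rtimes\delta(G)}_{F(\calP)\rtimes\delta(G)}\bigr](y,x^{op})\;\cong\;\lim_{F(\calP)_{\le(y,x^{op})}}\k.
$$
Next I would analyse this overcategory. An object $[\alpha]\in F(\calP)$ lies in $F(\calP)_{\le(y,x^{op})}$ iff there is a morphism $(s(\alpha),t(\alpha)^{op})\to(y,x^{op})$ in $\calP^e$, i.e.\ iff $t(\alpha)\le y$ and $x\le s(\alpha)$ in $\calP$. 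In particular the overcategory is empty unless $\Hom_{\calP}(x,y)\ne\emptyset$, in which case the unique morphism $\alpha_{xy}:x\to y$ provides a terminal object $[\alpha_{xy}]$: any other $[\beta]$ in the overcategory admits the canonical $\calP$-factorization $x\to s(\beta)\buildrel\beta\over\to t(\beta)\to y$, giving the required morphism $[\beta]\to[\alpha_{xy}]$ in $F(\calP)$. Hence the limit equals $k$ when $x\le y$ and $0$ otherwise, which matches $k\calP(y,x^{op})=k\Hom_{\calP}(x,y)$ as $k$-vector spaces.

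To upgrade this to a genuine isomorphism of $k\calP^e\rtimes\delta(G)$-modules, I would construct the map by adjunction rather than by pointwise choice. Observe first that the restriction $(k\calP)\downarrow^{\calP^e\rtimes\delta(G)}_{F(\calP)\rtimes\delta(G)}$ is canonically isomorphic to $\k$: on each object $[\alpha]$ both are $1$-dimensional, and a morphism $(\mu,\gamma^{op}):[\alpha]\to[\beta]=[\mu\alpha\gamma]$ acts on $k\calP$ by $\alpha\mapsto\mu\alpha\gamma=\beta$, i.e.\ by the identity on the distinguished basis vectors, while the $\delta(g)$-action sends the generator $\alpha$ to ${}^g\alpha$. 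Under the $\downarrow$-$\uparrow$ adjunction, the identity $\k\to(k\calP)\downarrow$ corresponds to a natural $k\calP^e\rtimes\delta(G)$-map
$$
\k\uparrow^{\calP^e\rtimes\delta(G)}_{F(\calP)\rtimes\delta(G)}\longrightarrow k\calP,
$$
which evaluated at $(y,x^{op})$ sends the terminal-object contribution in the limit to the basis element $\alpha_{xy}\in k\Hom_{\calP}(x,y)$. By the pointwise calculation this is an isomorphism, and equivariance of both structure maps and of the $\delta(G)$-action is automatic from the adjunction.

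Finally, for the consequence I would use transitivity of the Kan extension along the composite $F(\calP)\rtimes\delta(G)\hookrightarrow\calP^e\rtimes\delta(G)\hookrightarrow\calP^e\rtimes G^e\cong(\PG)^e$, together with Lemma 3.19, to get
$$
\k\uparrow^{(\PG)^e}_{F(\calP)\rtimes\delta(G)}\;\cong\;\bigl(\k\uparrow^{\calP^e\rtimes\delta(G)}_{F(\calP)\rtimes\delta(G)}\bigr)\uparrow^{(\PG)^e}_{\calP^e\rtimes\delta(G)}\;\cong\;k\calP\uparrow^{(\PG)^e}_{\calP^e\rtimes\delta(G)}.
$$
The right-hand side is $k\PG$ by the fully group-graded identification $A\cong A_1\uparrow^{A^e}_{\Delta(A)}$ of Section 2 applied to $A=k\PG$ (with $A_1=k\calP$ and $\Delta(A)\cong k\calP^e\rtimes\delta(G)$ by the lemma preceding this one).

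I expect the main obstacle to be the bookkeeping in the middle step: making sure that the terminal-object argument is correctly phrased (the overcategory need not itself be a poset with a unique terminal object a priori, but in $F(\calP)$ with $\calP$ a poset the Hom-sets are singletons, so there is no ambiguity), and that the counit map genuinely commutes with the $\delta(G)$-twists. Routing everything through adjunction rather than writing down the map by hand is what avoids any serious calculation on the latter point.
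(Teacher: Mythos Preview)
Your proof is correct and follows essentially the same route as the paper: the paper reduces the first isomorphism to $\k\uparrow^{\calP^e}_{F(\calP)}\cong k\calP$ (via what is Theorem~4.1(3), not 4.2(3)) and then simply cites \cite{Xu2} for that computation, whereas you unpack it explicitly via the terminal-object argument and the adjunction; the second isomorphism is then obtained in both cases by transitivity together with Boisen's $A\cong A_1\uparrow^{A^e}_{\Delta(A)}$. One small slip: in the paper's convention $\delta_{\calP}([\alpha])=(t(\alpha),s(\alpha)^{op})$, not $(s(\alpha),t(\alpha)^{op})$ as you wrote, but the conditions $t(\alpha)\le y$ and $x\le s(\alpha)$ you derive are the correct ones, so the argument is unaffected.
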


\begin{proof} The first isomorphism follows from $\k\uparrow^{\calP^e\rtimes\delta(G)}_{F(\calP)\rtimes\delta(G)}\cong\k\uparrow^{\calP^e}_{F(\calP)}\cong k\calP$, see \cite{Xu2} for the latter isomorphism. 

Along with Boisen's result, our first statement gives rise to the second.
\end{proof}

\begin{remark} When talking about blocks of a category algebra $k\calC$, we usually assume $\calC$ to be connected. If not, then $k\calC$ becomes a direct product $\prod_i k\calC_i$, where $\calC_i$ rans over the set of connected components of $\calC$. To study blocks of $k\calC$, it suffices to examine each $k\calC_i$. There is one more advantage to study connected categories. If $\calC$ is (finite) connected, then  the blocks of $k\calC$, as $k\calC^e$-modules, are non-isomorphic.

Now let $\PG$ be connected. (We shall emphasize that the connectedness of $\PG$ does not imply the connectedness of $\calP$.) Then the $k\calP^e\rtimes\delta(G)$-module $k\calP$ is indecomposable. The support of $k\PG$ is $\calC$ whose objects are identified with those of the image of $\Ob F(\calP\rtimes G)$ in $\Ob(\calP^e\rtimes G^e)$. Thus each block of $k\PG$ has (convex) support contained in $\calC$.
\end{remark}

Suppose $\PG$ is a connected transporter category. Let $B$ be a block of $k\PG$. Then $B\bigm{|}k\PG$ as $(k\PG)^e$-modules. Since $\k\uparrow^{(\PG)^e}_{F(\calP)\rtimes\delta(G)}\cong k\PG$, a vertex of $B$ lies in $F(\calP)\rtimes\delta(G)$.

\begin{definition} Suppose $\PG$ is a connected transporter category. Let $B$ be a block of $k\PG$. Regarded as a $(k\PG)^e$-module, a vertex $\calV\rtimes\delta(D)$ of $B$ that is contained in $F(\calP)\rtimes\delta(G)$ is called a \textit{defect transporter category}, or simply a defect, of $B$.
\end{definition}

The block theory of transporter category algebras will be discussed in a parallel paper. It is interesting, because there are enough blocks. The simplest example will be that $k(G/H)\rtimes G\simeq kH$ for a subgroup $H$. Moreover, Peter Webb constructed examples where the blocks of a group algebra biject with those of a certain transporter category algebra (which is not a group algebra). 

To finish off, we use a couple of examples to illustrate some features of the theory. Unlike group representations, the defects of the block $B$ do not have to be conjugate by elements of $\delta(G)$.

\begin{example} Let $\calP_n=x_1{\buildrel{\alpha_1}\over{\to}x_2\to\cdots\to}x_{n-1}{\buildrel{\alpha_{n-1}}\over{\to}}x_n$. Then there is only one block. Its defect is the vertex of $\k\in kF(\calP_n)$-mod, which is the following subposet $\calV\subset F(\calP_n)$.
$$
\xymatrix{&[\alpha_1]&&[\alpha_2]&\cdots&[\alpha_{n-1}]&\\
[1_{x_1}]\ar[ur]&&[1_{x_2}]\ar[ur]\ar[ul]&&\cdots\ar[ur]\ar[ul] && [1_{x_n}]\ar[ul]}
$$
\end{example}

Let $M$ be an indecomposable $k\PG$-module that lies in a block $B$. It is not necessarily true that a vertex $\calV_M$ of $M$ satisfies the condition that $F(\calV_M)\subset\calD_B$, where $\calD_B$ is a defect of $B$.

\begin{example} Let $\calP$ be the following poset
$$
\xymatrix{&z&&\\
&&y\ar[ul]&\\
w\ar[uur]&&&x\ar[ul]}
$$
It is connected so there is only one block $B_0$ (called the principal block). Thus every module lies in the block $B_0$. The vertex of $\k\in k\calP$-mod is the whole poset $\calP$. However, when we examine the vertex of $k\calP$ as a $k\calP^e$-module. Then we find that the defect of $B_0=k\calP$ is a proper subposet of $F(\calP)$.
\end{example} 

\subsection{Brauer correspondent} Suppose $A$ is a fully group-graded algebra. Boisen \cite{Bo1} introduced a Brauer correspondence between blocks of $A$ and of $A_H$ for suitable subgroup $H\subset G$. Assume $A=\SG$ is a skew group algebra. Let $b$ be a block of $A_H$ and $B$ be a block of $A$. Then $B$ is said to correspond to $b$ if $B$ is the unique block such that $b\bigm{|} B\downarrow^{A^e}_{A_H^e}$. We shall be interested in the case when $A=k\PG$, and improve Boisen's construction.

\begin{definition} Suppose $\PG$ is a connected transporter category and $\QH$ is a connected transporter subcategory. Let $B$ be a block of $k\PG$ and $b$ be a block of $k\QH$. We say $B$ corresponds to $b$, written as $B=b^{\PG}$, if $B$ is the unique block of $k\PG$ such that $b\bigm{|} B\downarrow^{(\PG)^e}_{(\QH)^e}$.

If $b$ is a block of $k\QH$ which has a block of $k\PG$ corresponds to it, then we say $b^{\PG}$ is defined.
\end{definition}

Suppose $\PG$ is a connected transporter subcategory and $\QH$ is a connected weakly convex transporter subcategory. Let $b$ be a block of $k\QH$. If $\QH$ is contained in some transporter subcategory $\RK$ while $b^{\RK}$, $(b^{\RK})^{\PG}$ and $b^{\PG}$ are defined, then we have an equality $b^{\PG}=(b^{\RK})^{\PG}$.

Based on his definition, Boisen \cite{Bo1} continued to establish a generalized Brauer's First Main Theorem for fully group-graded algebras. We state relevant consequences for a transporter category algebra $k\PG$. 

\begin{enumerate}
\item If $C_G(D)\subset H$, then $b^{\PG}$ is defined for any block $b$ of $k\PH$ which is projective relative to $\calP^e\rtimes D$ with $D$ a minimal $p$-subgroup with respect to this property. 

\item (Brauer correspondence for skew group algebras) Suppose $H$ is a subgroup and $D$ is a $p$-subgroup of $G$ such that $N_G(D)\subset H$. Then there is a one-to-one correspondence between the blocks of $k\PH$ that are relatively $k\calP^e\rtimes D$-projective for minimal $D$, and the blocks of $k\PG$ that are relatively $k\calP^e\rtimes D$-projective for minimal $D$. 

\item With Lemma 5.1, (2) can be restated as follows. Under the same assumptions, there is a one-to-one correspondence between the blocks of $k\PH$ that are relatively $F(\calP)\rtimes D$-projective for minimal $D$, and the blocks of $k\PG$ that are relatively $F(\calP)\rtimes D$-projective for minimal $D$.
\end{enumerate}

Boisen's Brauer correspondence has a counterpart, when group is fixed and subposets vary.

\begin{lemma} Suppose $\PG$ is a transporter category and $\QH$ is a connected weakly convex transporter subcategory. Let $b$ be a block of $k\QH$, with defect $\VD$. If $N_G(D)\subset H$, then $b^{\PH}$ is defined. It is the Green correspondent of $b$, and has $\VD$ as its defect.
\end{lemma}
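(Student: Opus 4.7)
The plan is to realise $b^{\PH}$ as the Green correspondent of $b$ along the enveloping-category inclusion $(\QH)^e\hookrightarrow(\PH)^e$, and then verify it satisfies the defining uniqueness property of the Brauer correspondent.

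First I would establish that $k\QH$ is a $(k\QH)^e$-bimodule direct summand of $k\PH\!\downarrow^{(\PH)^e}_{(\QH)^e}$. This reduces to a combinatorial fact about $\calQ\subset\calP$: because $\calQ$ is a convex full subposet of $\calP$, the linear map $\pi:k\Mor(\PH)\to k\Mor(\QH)$ given by $\alpha h\mapsto\alpha h$ if $\alpha\in\Mor\calQ$ and $0$ otherwise is a bimodule retraction of the inclusion---convexity is exactly what makes $\pi$ respect composition on both sides, and $H$-equivariance is automatic since $\calQ$ is an $H$-subposet. Decomposing $k\PH=\bigoplus_{B'}B'$ into blocks and using $b\,|\,k\QH$, we conclude that $b$ is a $(k\QH)^e$-summand of $B'\!\downarrow^{(\PH)^e}_{(\QH)^e}$ for at least one block $B'$ of $k\PH$.

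For such a $B'$ I would next show $\VD$ is a defect. If $U$ is a source of $b$, then $b\,|\,U\!\uparrow^{(\QH)^e}_{\VD}$ and hence $B'\,|\,b\!\uparrow^{(\PH)^e}_{(\QH)^e}\,|\,U\!\uparrow^{(\PH)^e}_{\VD}$, so $B'$ is relatively $\VD$-projective. Minimality follows because any strictly smaller vertex of $B'$ would, through the summand $b\,|\,B'\!\downarrow$, also be a vertex of $b$, contradicting that $\VD$ is already a vertex of $b$. The crux is then uniqueness of $B'$, which I would obtain in the spirit of the proof of Theorem 4.26 by analysing $b\!\uparrow^{(\PH)^e}_{(\QH)^e}\!\downarrow^{(\PH)^e}_{(\QH)^e}$ via Mackey: every ``non-diagonal'' summand is relatively projective to a proper conjugate intersection $\VD\cap{}^g\VD$ for some $g\in H^e$ that does not normalise $\VD$ inside $(\QH)^e$, hence has vertex strictly smaller than $\VD$. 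The hypothesis $N_G(D)\subset H$ is exactly what forces the remaining ``diagonal'' contribution to lie inside $(\QH)^e$, pinning down a unique summand of $b\!\uparrow^{(\PH)^e}_{(\QH)^e}$ with full vertex $\VD$. That summand is $B'$, and the same argument applied to any competing block $B''$ with $b\,|\,B''\!\downarrow$ shows $B''=B'$. Hence $b^{\PH}:=B'$ is well-defined, coincides with the Green correspondent of $b$, and has defect $\VD$.

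The main obstacle is the normalizer translation in the last paragraph. Since $\VD$ sits inside $F(\calP)\rtimes\delta(H)$, which by Lemma 3.14 is only a coideal---not an ideal---of $\calP^e\rtimes\delta(H)\subset(\PH)^e$, Theorem 4.26 cannot be invoked as a black box and one has to unpack the Mackey double-coset decomposition by hand. Carefully tracking the $H^e$-action on $\VD=\calV\rtimes\delta(D)$---in particular identifying $N_{H^e}(\delta(D))$ as the set of $(h_1,h_2^{op})$ with $h_1\in N_H(D)$ and $h_2h_1\in C_H(D)$---and verifying that $N_G(D)\subset H$ suffices to collapse all non-diagonal double cosets to summands of strictly smaller vertex, is where the technical work concentrates.
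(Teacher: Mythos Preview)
Your proposal works in spirit but takes a substantially harder road than the paper, and the extra difficulty is avoidable once you notice one fact you overlooked.

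The paper's proof rests on the observation that
\[
k\PH\!\downarrow^{(\PH)^e}_{(\QH)^e}=k\QH
\]
as $(k\QH)^e$-modules---an \emph{equality}, not merely a summand relation. This holds because $\QH$ is a \emph{full} subcategory of $\PH$: for $x,y\in\Ob\calQ$ one has $\Hom_{\PH}(x,y)=\Hom_{\QH}(x,y)$ (the group is the same, and $\calQ\subset\calP$ is a full subposet). Consequently the block decomposition $k\PH=\bigoplus_{B'}B'$ restricts to $k\QH=\bigoplus_{B'}(B'\!\downarrow)$, and the blocks of $k\QH$ are exactly the indecomposable summands of the various $B'\!\downarrow$. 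Since distinct blocks of $k\QH$ are non-isomorphic (Remark~5.2), each block $b$ of $k\QH$ appears in precisely one $B'\!\downarrow$. That is the entire uniqueness argument: no Mackey decomposition, no normalizer bookkeeping, no worry about coideals versus ideals. Your retraction $\pi$ is actually an isomorphism in this setting, and recognising that dissolves the ``main obstacle'' you identify in your final paragraph.

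With uniqueness in hand, the paper then invokes the Green correspondence (Theorem~4.26) inside $(\PH)^e$, with middle category $(\QH)^e=\calQ^e\rtimes H^e$ and vertex $\VD$: since $\calV\subset F(\calQ)\subset\calQ^e$ and $N_{H^e}(\VD)\subset H^e$ trivially, the hypothesis $\calV\rtimes N_{H^e}(\VD)\subset(\QH)^e$ is automatic. This gives that the unique $B'$ is the Green correspondent of $b$ and shares its vertex (defect) $\VD$.

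A smaller remark: in your step~2 the minimality claim (``any strictly smaller vertex of $B'$ would also be a vertex of $b$'') is not quite immediate; passing a vertex of $B'$ down to the summand $b$ of $B'\!\downarrow$ requires a Mackey argument of its own, which you have not supplied. The paper sidesteps this by appealing directly to the Green correspondence for the defect-preservation statement.
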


\begin{proof} Let $b_1$ be a block of $k\PH$ with defect $\VD$. Then $b_1\downarrow_{(\QH)^e}$ has a unique direct summand $b$, which is the Green correspondent of $b_1$. Since $k\PH\downarrow^{(\PH)^e}_{(\QH)^e}=k\QH$, it is clear that $b$ is a block of $k\QH$ with defect $\VD$.

Conversely if $b$ is a block of $k\QH$ with defect $\VD$, then there exists a block $b_2$ of $k\PH$ so that $b\bigm{|}b_2\downarrow_{(\QH)^e}$. Since the blocks of $k\PH$ (and of $k\QH$) are non-isomorphic, $b_2$ is unique.

Thus we have a one-to-one correspondence between blocks of $k\QH$ with defect $\VD$, and blocks of $k\PH$ with the same defect, via $b\mapsto b^{\PH}$.
\end{proof}

Along with the Brauer correspondence for group-graded algebras by Boisen, we will obtain an improved correspondence between blocks of transporter category algebras.

\begin{theorem}[Brauer's First Main Theorem for transporter categories] Suppose $\PG$ is a (connected) transporter category and $\QH$ is a (connected) weakly convex transporter subcategory. Let $\VD$ be a connected weakly convex $p$-transporter subcategory of $F(\calP)\rtimes G$, such that $\calV\subset F(\calQ)$ and $N_G(D)\subset H$. Then there is a one-to-one correspondence between the blocks of $k\QH$ with defect $\VD$, and the blocks of $k\PG$ with defect $\VD$, given by letting $b$ correspond to $b^{\PG}$.
\end{theorem}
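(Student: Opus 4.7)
The plan is to establish the bijection by factoring it through the intermediate algebra $k\PH$, composing the poset-varying correspondence of Lemma~5.7 with Boisen's classical group-varying Brauer correspondence for skew group algebras. Explicitly, starting from a block $b$ of $k\QH$ with defect $\VD$, I would first produce a block $b^{\PH}$ of $k\PH$ with the same defect, and then a block $(b^{\PH})^{\PG}$ of $k\PG$. The transitivity remark following Definition~5.5 identifies $(b^{\PH})^{\PG}$ with $b^{\PG}$, matching the statement of the theorem.

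For Step~1, apply Lemma~5.7 to the weakly convex inclusion $\QH \subset \PH$: the condition $\calV \subset F(\calQ)$ guarantees that $\VD$ is a legitimate defect inside $F(\calQ)\rtimes H$, and the hypothesis $N_G(D) \subset H$ is at least as strong as what that lemma requires. Lemma~5.7 thus delivers a bijection between blocks of $k\QH$ with defect $\VD$ and blocks of $k\PH$ with defect $\VD$, realized via the $(k\PH)^e$-level Green correspondence. For Step~2, invoke Boisen's Brauer correspondence (item~(3) following Definition~5.5) for the inclusion $\PH \subset \PG$: under $N_G(D) \subset H$ this yields a bijection between blocks of $k\PH$ and blocks of $k\PG$ that are relatively $F(\calP)\rtimes D$-projective for minimal $p$-subgroup $D$. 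To upgrade this so that it preserves the full defect $\VD$ (and not merely the underlying $p$-subgroup $D$), I would apply the Green correspondence for transporter categories (Theorem~4.26) to $(k\PG)^e$-modules. Regarding $\VD$ as a weakly convex $p$-transporter subcategory of the coideal $F(\calP)\rtimes\delta(G) \subset (\PG)^e$ (Lemma~3.16), and using that the normalizer data controlling $\VD$ is governed by $N_G(D) \subset H$, Theorem~4.26 produces a bijection between indecomposable $(k\PG)^e$-summands and indecomposable $(k\PH)^e$-summands both having vertex $\VD$. Restricting to those summands that are blocks yields the refined correspondence.

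The principal obstacle is the refinement carried out in Step~2: the classical Boisen correspondence records only the minimal $p$-subgroup $D$, whereas the sharpened theory of this paper records the full weakly convex subcategory $\VD \subset F(\calP)\rtimes G$. The heart of the argument is therefore to verify that the specific $D$-subposet $\calV$ is preserved under the $(k\PG)^e$-level Green correspondence, which should follow from Theorem~4.26 once one observes, via Lemma~5.1, that every block of $k\PG$ has its defect already contained in $F(\calP)\rtimes\delta(G)$, combined with the uniqueness of sharpened vertices up to conjugation by $N_G(\VD)$. Composing Steps~1 and~2 then produces the desired bijection $b \leftrightarrow b^{\PG}$, concluding the proof.
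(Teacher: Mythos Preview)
Your overall architecture matches the paper's: both factor the bijection through $k\PH$, apply Lemma~5.7 for the step $\QH \subset \PH$, and invoke Boisen's Brauer correspondence for the step $\PH \subset \PG$. The divergence is in how the refinement from the minimal $p$-subgroup $D$ to the full defect $\VD$ is carried out.

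The paper does not appeal to Theorem~4.26 for this. Instead it argues directly: start from a block $B$ of $k\PG$ with defect $\VD$ and source $S \in k\VD$-mod, take Boisen's correspondent $b_1 \mid B\downarrow_{(\PH)^e}$ (already known to be a block of $k\PH$ with $D$ minimal), and apply the Mackey formula to $S\uparrow^{\calP^e\rtimes G^e}\downarrow_{\calP^e\rtimes D}$. The intersections appearing are ${}^{(g_1,g_2)}\calV\rtimes\bigl(D\cap{}^{(g_1,g_2)}D\bigr)$; since $D$ is already minimal for $b_1$, the summand carrying $b_1$ must satisfy $D\cap{}^{(g_1,g_2)}D = D$, whence the relevant subcategory is a conjugate of $\VD$. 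Thus $b_1$ has defect $\VD$, and Lemma~5.7 finishes the job.

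Your proposed route via Theorem~4.26 has a gap at exactly the point you flag as the ``principal obstacle''. Theorem~4.26 yields a bijection between indecomposable $(k\PG)^e$-modules and indecomposable $(k\PH)^e$-modules with vertex $\calV\rtimes\delta(D)$ and prescribed convex supports; it does not tell you which of those indecomposables are blocks. Your sentence ``Restricting to those summands that are blocks yields the refined correspondence'' is precisely what must be proved: why is the Green correspondent of a block again a block, and why does it coincide with Boisen's $b_1$? Boisen guarantees that $b_1$ is a block, but only tracks the group $D$; Theorem~4.26 tracks the full vertex, but only at the level of arbitrary indecomposable summands of $B\downarrow$. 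Reconciling these two outputs is not formal --- it is exactly the content of the Mackey computation the paper performs. You also leave unchecked several hypotheses of Theorem~4.26 in this setting: that $\calV\rtimes\delta(D)$ sits as a weak ideal in the relevant convex support inside $\calP^e\rtimes G^e$ (recall $F(\calP)$ is a coideal, not an ideal, in $\calP^e$), and that the blocks in question actually have the convex supports $\underbrace{{}_{G^e}\calV}\rtimes G^e$ and $\underbrace{{}_{H^e}\calV}\rtimes H^e$ that the theorem requires.
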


\begin{proof} Keep the following diagram of categories in mind, where each arrow represents an inclusion.
$$
\xymatrix{&(\QH)^e \ar[r] & (\PH)^e \ar[r] & (\PG)^e\\
\VD \ar[r] & F(\calQ)\rtimes D \ar[r] \ar[u] & F(\calP)\rtimes D \ar[u] & }
$$
Let $B$ be a block of $k\PG$ with defect $\VD$. Since it is projective relative to $\calP^e\rtimes D$, there is a unique block $b_1$ of $k\PH$, which is projective relative to $\calP^e\rtimes D$, such that $b_1\bigm{|} B\downarrow_{(\PH)^e}$. It means $b_1^{\PG}=B$. We shall show $b_1$ has $\VD$ as a defect. Then Lemma 5.7 identifies a unique block $b$ of $k\QH$ with defect $\VD$ such that $b^{\PH}=b_1$. Since $B=b_1^{\PG}=(b^{\PH})^{\PG}=b^{\PG}$, we are done.

Let $S\in k\VD$-mod be a source for $B$. Then 
$$
b_1\downarrow_{\calP^e\rtimes D}\bigm{|}S\uparrow^{\calP^e\rtimes G^e}\downarrow_{\calP^e\rtimes D}
$$
Now by the Mackey formula, the right hand side is 
$$
\bigoplus_{(g_1,g_2)\in[D\backslash G^e/D]}[{}^{(g_1,g_2)}(S\downarrow^{\calP^e\rtimes G^e}_{[\calP^e\rtimes D]^{(g_1,g_2)}\cap(\VD)})]\uparrow^{\calP^e\rtimes G^e}_{[\calP^e\rtimes D]\cap{}^{(g_1,g_2)}(\VD)}.
$$
By assumption $D$ is minimal with respect to the property that $b_1$ is relatively $\calP^e\rtimes D$-projective. Since $(\calP^e\rtimes D)\cap{}^{(g_1,g_2)}(\VD)={}^{(g_1,g_2)}\calV\rtimes(D\cap{}^{(g_1,g_2)}D)$, it implies that, for some $(g_1,g_2)$, $D\cap{}^{(g_1,g_2)}D=D$. (While the other intersection groups are strictly smaller than $D$.) It forces $D={}^{(g_1,g_2)}D$ and ${}^{(g_1,g_2)}\calV\rtimes(D\cap{}^{(g_1,g_2)}D)={}^{(g_1,g_2)}\calV\rtimes{}^{(g_1,g_2)}D$ becomes a conjugate of $\VD$. Thus $b_1$ must have $\VD$ as a defect.
\end{proof}

Under the assumption of the above theorem, $b$ is called the \textit{Brauer correspondent} of $b^{\PG}$. Note that to guarantee the existence of $b^{\PG}$, we only need to require $C_G(D)\subset H$, by Boisen \cite{Bo1} and Lemma 5.7. In a separate paper, we shall develop a ring-theoretic approach, and try to generalize some other results, including the second and third main theorems of Brauer.

\end{document}